\documentclass[pdflatex,sn-mathphys-num,referee]{sn-jnl}

\usepackage{graphicx}%
\usepackage{multirow}%
\usepackage{amsmath,amssymb,amsfonts}%
\usepackage{amsthm}%
\usepackage{mathrsfs}%
\usepackage[title]{appendix}%
\usepackage{xcolor}%
\usepackage{textcomp}%
\usepackage{manyfoot}%
\usepackage{booktabs}%
\usepackage{algorithm}%
\usepackage{algorithmicx}%
\usepackage{algpseudocode}%
\usepackage{listings}%
\usepackage{comment}
\usepackage{makecell}
\usepackage{adjustbox}
\usepackage{etoolbox}

\makeatletter
\AtBeginDocument{%
  \AtBeginEnvironment{appendices}{%
  }%
}
\makeatother

\theoremstyle{thmstyleone}%
\newtheorem{theorem}{Theorem}
\newtheorem{proposition}[theorem]{Proposition}%
\newtheorem{lemma}[theorem]{Lemma}

\theoremstyle{thmstyletwo}%

\theoremstyle{thmstylethree}%

\newcommand{\gap}{\vspace{0.1in}}

\newcommand{\epc}{\hspace{1pc}}

\newcommand{\onebld}{\mathbf{1}}

\newcommand{\bs}{\boldsymbol}
\newcommand{\wh}{\widehat}

\usepackage{geometry}
\begin{document}

\title[Article Title]{Solving Constrained Affine Heaviside Composite 
Optimization Problems by a Progressive IP Approach}


\author[1]{\fnm{Ke} \sur{Zheng}}\email{zhengk24@mails.tsinghua.edu.cn}

\author*[1]{\fnm{Junyi} \sur{Liu}}\email{
junyiliu@tsinghua.edu.cn}

\author[1]{\fnm{Yurui} \sur{Wang}}\email{wang-yr25@mails.tsinghua.edu.cn}

\author[2]{\fnm{Jong-Shi} \sur{Pang}}\email{
jongship@usc.edu}

\affil[1]{\orgdiv{Department of Industrial Engineering}, \orgname{Tsinghua University}, \orgaddress{\city{Beijing}, \postcode{100084}, \country{China}}}

\affil[2]{\orgdiv{The Daniel J. Epstein Department of Industrial and Systems Engineering}, \orgname{University of Southern California}, \orgaddress{\city{Los Angeles}, \state{CA}, \postcode{90007}, \country{USA}}}
 

\abstract{This paper discusses the computational resolution and presents
numerical results for solving affine
combinations of Heaviside composite optimization problems (abbreviated
as A-HSCOPs) by a progressive integer programming (abbreviated as PIP)
method.  The characteristics of these problems are
that the Heaviside functions, which appear in the objective and define the constraints, are discontinuous, and their mixed-signed combinations result
in the overall objective lacking the matching semicontinuity needed
for the optimization and in the feasible set being not necessarily closed.
Added
to these challenging properties is the nondifferentiability of the inner
functions in the composition.  In this paper, we propose resolutions to
all these challenges by first an approximation to remedy the lack of 
semicontinuity in the objective and closedness in the constraints, followed
by a progressive integer programming approach with successive decomposition 
to handle the intrinsically
discrete nature of the Heaviside function.  Convergence to the local 
optimizers of the given Heaviside 
optimization problem is established.  The effectiveness of the overall
solution strategy is supported by extensive computational experiments
on the score-based and tree-based multiclass classification problems with precision constraints.}

\keywords{Discontinuous optimization, Heaviside function, Progressive integer 
programming, Constrained classification problems}



\maketitle

\section{Introduction}\label{sec:Intro}
We consider a class of discontinuous maximization problem that lacks upper 
semicontinuity (u.s.c.) in the objective and closedness in the feasible set,
both defined by affine combinations of Heaviside composite functions.  These
unusual and challenging characteristics are caused by a univariate Heaviside
function, which is the indicator of an open or closed interval 
$( 0, \infty)$ or $[ 0, \infty)$, which we call an open or closed 
Heaviside function, respectively.  By its name, a Heaviside composite 
function is the composition of a Heaviside function (the outer function) with 
a continuous multivariate function
(the inner function) that may be nonconvex and nondifferentiable (for 
instance, piecewise affine). The Heaviside composite optimization problem 
was first studied in \cite{CuiLiuPang23-piecewise} with positive affine 
combinations of Heaviside composite functions that ensure upper semicontinuity
of the objective function (to be maximized) and closedness of the feasible 
set.  Subsequent to \cite{CuiLiuPang23-piecewise}, 
the follow-up paper \cite{HanCuiPang24} considers affine 
combinations with mixed-signed coefficients; the resulting (maximization)
problem, which we term an affine Heaviside composite optimization problem
and abbreviate as A-HSCOP, thus has a non-u.s.c.\ objective and a non-closed 
feasible set, it therefore violates the two basic requirements in the 
classical 
Bolzano-Weierstrass theorem: namely, functional semicontinuity and 
set-theoretic closedness.  Lacking these two essential properties poses 
critical challenges to the A-HSCOP in both theory and computation. This may 
explain why this class of discontinuous problems have not been broadly 
studied, in spite of their wide applications, which include policy learning 
\cite{liu2026offline} in economics, hierarchical variable/constraint 
selection \cite{BienTaylorTibshirani2013} in statistics, decision trees 
\cite{BertsimasDunn17} in classification, and precision constraints 
in machine learning, to name a few.   

In the present paper, we focus on the A-HSCOP with mixed-signed combinations 
of the Heaviside functions.  
Lifting of the problem was the principle tool employed in \cite{HanCuiPang24}
to resolve the lack of the two basic properties. The drawback
of the resulting lifted problem is that it 
is highly nonconvex and nondifferentiable and quite complex to be solved in 
practice, even by the growing pool of numerical methods based on surrogation 
\cite{CuiPang2021,PangPeng2025}. Another approach for resolving the 
A-HSCOP is to construct continuous approximations to the 
Heaviside composite functions, including the piecewise approximations 
\cite{CuiLiuPang22-ccp,CuiLiuPang23-piecewise}, integration-based
convolution \cite{ENWets95}, and smoothing approaches 
\cite{QiCuiLiuPang19,Chen12}. While the continuous approximation usually 
leads to nonconvex problems, it provides a viable approach for algorithms 
to be developed given the advances in nonconvex optimization. As shown in 
\cite{QiCuiLiuPang19} the smoothing approach works arguably well for 
unconstrained treatment learning problems. However, when incorporating 
context-related Heaviside-based constraints, it is easy to get stuck at 
solutions with inferior quality. Moreover, there is insufficient 
understanding of the limiting properties of the computed solution sequence 
reference to the original discontinuous problem.  The 
article \cite{CuiLiuPang23-piecewise} is one of a few that demonstrates that 
the limit point of Bouligand-stationary solutions to the pointwise 
approximated problems is a pseudo-stationary point, which is a very weak
kind of stationary solutions.

Arguably, one obvious
way to avoid the semicontinuity/closedness issue is to begin with the 
modeling;
that is, create a mathematical model with favorable semicontinuity 
of the objective function and needed closedness of the constraint set 
by using the closed Heaviside function when multiple such functions are 
positively combined and using 
open Heaviside functions when they are negatively combined.  However, 
as shown by our numerical results, this mathematically dictated modeling
is at the expense of sacrificing model faithfulness; the consequence
of such loosely constructed models is that they diminish the 
ultimate performance of the models' intended goals.

\subsection{Problem formulation and an algorithmic sketch}

In this paper, we address an A-HSCOP where the coefficients are of
mixed signs and the inner functions are piecewise affine.  The formal 
mathematical 
formulation is as follows: 
\begin{equation} \label{eq:A_HSCOP}
\begin{array}{ll}
\underset{x\in P}{\textbf{maximize}} &
\theta_{\text{AHS}}(x) \triangleq c(x) + \displaystyle{
\sum_{j=1}^{J_0}
} \, \psi_{0j} \, \mathbf{1}_{[ \, 0,\infty )}(\phi_{0j}(x)) \\ [0.1in]
\textbf{subject to} 
& A_{ i \bullet } x + \displaystyle{
\sum_{j=1}^{J_i}
} \, \psi_{ij} \, \onebld_{\left[ \, 0 , \infty \right)}(\phi_{ij}(x))  
\, \geq \, \eta_i, \qquad  \mbox{for all $i = 1, \cdots, I$}
\end{array}
\end{equation}
where $P$ is a closed convex set (e.g.\ a polyhedron), the stand-alone 
function $c(x)$ has favorable properties (e.g., linear), $A_{i \bullet}$ is
the $i$-th row of an $I \times n$ matrix $A$,
and each 
$\phi_{ij}$ is the sum of a convex and a concave piecewise affine (PA)
function:
\begin{equation} \label{eq:PA phi}
\phi_{ij}(x) \, = \, \underbrace{\displaystyle{
\max_{1 \leq k \leq K_{ij}}
} \, \left[ \, ( a_{ij}^k )^{\top}x + \alpha_{ij}^k \, \right]}_{
\mbox{denoted $\phi_{ij}^{\rm cvx}(x)$}} 
+ \underbrace{\displaystyle{
\min_{1 \leq \ell \leq L_{ij}}
} \, \left[ \, ( b_{ij}^{\, \ell} )^{\top}x + \beta_{ij}^{\, \ell} 
\, \right]}_{\mbox{denoted $\phi_{ij}^{\rm cve}(x)$}}, 
\epc x \, \in \, \mathbb{R}^n
\end{equation}
for some constant vectors $a_{ij}^k$ and $b_{ij}^{\, \ell}$
and scalars $\alpha_{ij}^k$ and $\beta_{ij}^{\, \ell}$; thus $\phi_{ij}$
is a difference-of-convex (dc) function.  As formulated,
the above-mentioned computational difficulties become apparent: 
the objective function may not be upper semicontinuous and
the feasible set may not be closed.  The difference-of-convexity of
the inner piecewise functions $\phi_{ij}$ adds further challenges to
the practical solution of the problem.

In this paper, we develop an iterative decomposed shrinkage algorithm (IDSA) 
with the progressive integer programming (PIP) method as the principal 
computational tool for efficiently computing a locally optimal solution 
of the A-HSCOP. 
The PIP idea originates from the computational study of the A-HSCOP with 
positive coefficients for policy learning by \cite{YueFang23}. An integer 
programming approach is natural due to the 
combined continuous-discrete features in the Heaviside 
composite functions.  Moreover, the IP approach can easily preserve 
the linearity structure of the problem to a large extent, unlike the 
smoothing approach \cite{CuiLiuPang23-piecewise} or a lifting approach
\cite{HanCuiPang24} that lacks this advantage.  The IP approach has 
the added benefit that a globally optimal solution can in principle be 
computed if desired, provided that there is no computational budget or time
restrictions. One goal in our computational study is to demonstrate
that in situations where globally optimal solutions can be computed by 
a global solver,
PIP, which in general computes only suboptimal solutions, can obtain
high-quality suboptimal solutions in significantly less time required by the
global solver. It will also illustrate the power of the PIP scheme for 
large-scale constrained A-HSCOP problems when the global solver fails to 
find feasible solutions within the time limit. 

There are two prominent piecewise structures in the Heaviside 
composite function 
$\onebld_{[ \, 0,\infty )} \circ \phi_{ij}$;
one is the outer Heaviside function $\onebld_{[ \, 0,\infty )}$
and the other is the inner function $\phi_{ij}$.  These are handled by
the two computational constructions that are the basis for the design 
of IDSA.  We describe these two steps below along with an overview of 
the computational study of IDSA.

\noindent $\bullet $ {\bf Approximation:}
To deal with the non-upper-semicontinuity of A-HSCOP, we construct a
u.s.c.\ problem with a closed constraint set by approximating each
Heaviside term $\psi_{ij} \onebld_{[ \, 0,\infty )}(\phi_{ij}(x))$ 
for $\psi_{ij}<0$ with an upper semicontinuous minorant 
$\psi_{ij} \onebld_{( \, -\varepsilon,\infty )}(\phi_{ij}(x))$
for some $\varepsilon > 0$. We 
show that the approximated problem is equivalent to the A-HSCOP in terms of 
their local maximizers under a {\it local sign-invariance} condition;
see Proposition~\ref{pr:N&S II locmax AHSCOP}.

\noindent $\bullet $ {\bf Decomposition:} 
To address the PA structure of the
inner functions, we employ a strategy similar to the one for a dc program
with difference of pointwise maximum functions
\cite{PangRazaAlvarado17}, leading to a family of decomposed approximated 
subproblems, which are the computational workhorse of IDSA. We show that 
these subproblems preserve the same local optimality with reference to the 
original A-HSCOP; see Proposition~\ref{pr:N&S III locmax AHSCOP}. 

\noindent $\bullet $ {\bf PIP:}  We solve each approximated and decomposed 
subproblem as an integer program obtained by substituting each Heaviside
term by a binary variable; the latter is a mixed-integer linear program
if $c(x)$ is linear.  The method is enhanced by the progressive idea
wherein IPs of reduced sizes are solved to save on computational efforts.
Convergence of IDSA is proved and implementation is performed.


\noindent $\bullet $ {\bf Computational study:}
We conduct case studies of IDSA on precision-constrained classification 
and decision tree problems, whose straightforward IP formulations are
of large-scale when derived from data-driven applications. The obtained
results illustrate the significant numerical advantages of the PIP-based IDSA 
for efficiently computing high-quality suboptimal solutions of the A-HSCOP
and dominating Pareto curves in comparison with other prominent IP approaches 
based on a full-integer formulation. 

The rest of this paper provides details of the above outline and is 
organized as follows.
In Section~\ref{sec:approx formulation}, we describe the approximation 
strategy and its local equivalence with the A-HSCOP in terms of their 
local maximizers under the local sign invariance condition.  
In Section~\ref{sec:PA decomp} we introduce the 
piecewise affine decomposition strategy and characterize the global/local 
maximizers for the decomposed subproblems. In Section~\ref{sec:ISA-PIP}, 
we present the PIP-based iterative decomposed shrinkage algorithm that 
combines the iterative approximation and decomposition, and analyze the local 
optimality of the limit points of a sequence of subproblems relative to 
the given A-HSCOP.  Finally,
in Section~\ref{sec:multiclass classification}, we present the case studies 
along with numerical results for solving multiclass classification problems.   

\section{The Local Semicontinuous Equivalent: $\varepsilon$-Approximation}
\label{sec:approx formulation}


Approximating each Heaviside term 
$\psi_{ij} \onebld_{[ \, 0,\infty )}(\phi_{ij}(x))$ with $\psi_{ij}<0$, 
by an upper semicontinuous term 
$\psi_{ij} \onebld_{( \, -\varepsilon,\infty )}(\phi_{ij}(x))$
for some $\varepsilon > 0$, we obtain the $\varepsilon$-approximating 
problem:
\begin{equation} \label{eq:partitioned I AHSCOP} 
\begin{array}{ll}
\left( \, \mbox{\bf P}_{\rm AHS}^{\, \varepsilon} \, \right) \epc
\displaystyle{
\operatornamewithlimits{\mbox{\bf maximize}}_{x \in P}
}  & \theta^{\, \varepsilon}(x) \, \triangleq \, c(x) \, + 
\displaystyle{
\sum_{j=1}^{J_0}
} \, \psi_{0j}^+ \, \onebld_{[ \, 0,\infty )}( \phi_{0j}(x) ) -
\displaystyle{
\sum_{j=1}^{J_0}
} \, \psi_{0j}^- \, 
\onebld_{( \, -\varepsilon,\infty )}( \phi_{0j}(x) ) \\ 
\hspace{0.7in} \mbox{\bf subject to} & \mbox{for all $i \in [ \, I \, ]
\triangleq \{ \, 1, \cdots, I \, \}$:} \\ [0.1in]
& A_{\, i \bullet} \, x +   \displaystyle{
\sum_{j=1}^{J_i}
} \, \psi_{ij}^+ \, \onebld_{[ \, 0,\infty )}( \phi_{ij}(x) ) -
 \displaystyle{
\sum_{j=1}^{J_i}
} \, \psi_{ij}^- \, 
\onebld_{( \, -\varepsilon,\infty )}( \phi_{ij}(x) )  \, \geq \, \eta_i, 
\end{array}  
\end{equation}
where $\psi_{ij}^{\pm} \triangleq \max( \pm \psi_{ij},0 )$ are the nonnegative
($+$) and nonpositive ($-$) parts of the scalar $\psi_{ij}$, respectively.
We note several properties of the open Heaviside function 
$\onebld_{( \, -\varepsilon,\infty )}( \bullet )$, which are instrumental 
in the following analysis.  The proof of these properties is omitted.

\begin{lemma} \label{lm:epsilon_approximation} \rm
\noindent {\bf (i)} 
$\onebld_{( \, -\varepsilon,\infty )}(t) \, \geq \,
\onebld_{( \, -\varepsilon^{\, \prime},\infty )}(t) \, \geq \,
\onebld_{[ \, 0, \infty )}(t), 
\epc \forall \, \varepsilon \, > \, \varepsilon^{\, \prime} 
\, > \, 0 \ \mbox{ and } \ \forall \, t \, \in \, \mathbb{R}$;

\noindent {\bf (ii)} $\displaystyle{
\lim_{\varepsilon \downarrow 0}
} \, \onebld_{(-\varepsilon,\infty)}(t) 
\, = \, \onebld_{[\,0,\,\infty)}(t), \epc \forall \, 
t \, \in \, \mathbb{R}$;

\noindent {\bf (iii)} for any scalar $t_* \neq 0$, a scalar
$\varepsilon_* > 0$ and a neighborhood ${\cal T}_*$ of $t_*$ exist
such that
\begin{equation} \label{eq:limit indicator}
\onebld_{( \, -\varepsilon,\infty )}(t) \, = \,
\onebld_{[ \, 0, \infty )}(t_*), \epc \forall \, ( t,\varepsilon ) 
\in {\cal T}_* \times ( 0,\varepsilon_* );
\end{equation}
{\bf (iv)} for any sequence $\{ t_k \} \to 0$, a sequence
$\{ \varepsilon_k \} \downarrow 0$ exists such that 
$\onebld_{( \, -\varepsilon_k,\infty )}(t_k) =
\onebld_{[ \, 0, \infty )}(t_k)$ for all $k$. \hfill $\Box$
\end{lemma}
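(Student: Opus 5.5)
Each of the four properties follows by comparing the level sets of the indicators involved, so the plan is a direct case analysis on the sign of $t$ relative to the thresholds $0$, $-\varepsilon^{\prime}$ and $-\varepsilon$. For (i), the inclusions $(-\varepsilon,\infty) \supseteq (-\varepsilon^{\prime},\infty) \supseteq [\,0,\infty)$, valid whenever $\varepsilon > \varepsilon^{\prime} > 0$, give the inequalities at once, because the indicator of a larger set dominates the indicator of a smaller one pointwise.

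For (ii), split into two cases. If $t \geq 0$, then $t > -\varepsilon$ for every $\varepsilon > 0$, so both sides equal $1$. If $t < 0$, then for every $\varepsilon \in (0,-t]$ we have $t \leq -\varepsilon$, so $\onebld_{(-\varepsilon,\infty)}(t) = 0 = \onebld_{[\,0,\infty)}(t)$. Thus the sequence is eventually constant, and the limit holds.

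For (iii), take $\varepsilon_* \triangleq |t_*|/2$ and ${\cal T}_* \triangleq (t_* - |t_*|/2,\, t_* + |t_*|/2)$. If $t_* > 0$, every $t \in {\cal T}_*$ satisfies $t > t_*/2 > 0 > -\varepsilon$, so the indicator equals $1 = \onebld_{[\,0,\infty)}(t_*)$. If $t_* < 0$, every $t \in {\cal T}_*$ satisfies $t < t_*/2 = -\varepsilon_* < -\varepsilon$ for $\varepsilon \in (0,\varepsilon_*)$, so the indicator equals $0 = \onebld_{[\,0,\infty)}(t_*)$. The essential point is that $t_* \neq 0$ leaves room to separate $t$ from $-\varepsilon$ uniformly in both variables.

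Item (iv) is the only step needing some care, since $t_k$ may approach $0$ from below and we must pick $\varepsilon_k$ small enough to exclude such $t_k$ while still driving $\varepsilon_k$ to $0$. Define $\varepsilon_k \triangleq \min(|t_k|, 1/k)$ when $t_k < 0$, and $\varepsilon_k \triangleq 1/k$ otherwise. Then $\varepsilon_k > 0$ and $\varepsilon_k \to 0$ because $|t_k| \to 0$.
\begin{itemize}
\item When $t_k \geq 0$, both indicators equal $1$.
\item When $t_k < 0$, we have $t_k \leq -\varepsilon_k$, so both indicators equal $0$.
\end{itemize}
If a monotone decrease $\varepsilon_k \downarrow 0$ is required, replace $\varepsilon_k$ by $\min_{m \leq k} \varepsilon_m$. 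This running minimum is nonincreasing and still positive, still tends to $0$, and preserves the conclusion, since it lies below each $|t_k|$ in the case $t_k < 0$ and the first case does not depend on the size of $\varepsilon_k$. If strict decrease is needed, additionally multiply by $(1 + 1/k)$, which keeps each term below $|t_k|$ only after adjustment; a simpler choice is $\min_{m \leq k}\varepsilon_m \cdot 2^{-k}$, which is strictly decreasing and dominated by the running minimum.
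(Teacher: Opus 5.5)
Your proof is correct. The paper omits the proof of this lemma as routine, and your case analysis on the sign of $t$ relative to $0$ and $-\varepsilon$ is exactly the elementary verification intended.

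The one blemish is the aside in (iv) about multiplying by $(1+1/k)$. This can push $\varepsilon_k$ above $|t_k|$ when $t_k<0$, which breaks the conclusion, so delete it. The rescaled running minimum $2^{-k}\min_{m\le k}\varepsilon_m$ already gives a strictly decreasing valid choice. Also, $\varepsilon_k\to 0$ follows from the cap $1/k$ alone, so your construction does not actually need $t_k\to 0$.
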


Let $X_{\rm AHS}$ and $X_{\rm AHS}^{\varepsilon}$ denote the constrained 
set of the A-HSCOP problem (\ref{eq:A_HSCOP}) and 
$\left( \, \mbox{\bf P}_{\rm AHS}^{\, \varepsilon} \, \right) $ respectively. 
Clearly, we have the following monotonic properties for all 
$\varepsilon \, \in \, ( 0,\varepsilon^{\prime} )$:
\begin{equation} \label{eq:2 epsilons}
X_{\rm AHS}^{\, {\varepsilon^{\, \prime}}} \, \subseteq \, 
X_{\rm AHS}^{\varepsilon} \, \subseteq \, X_{\rm AHS} \epc \mbox{and} \epc
\theta(x) \, \geq \, \theta^{\, \varepsilon}(x) \, \geq \, 
\theta^{\, \varepsilon^{\prime}}(x), 
\end{equation}
It follows that
\begin{equation} \label{eq:equal suprema}
\sup_{x \in X_{\rm AHS}} \, \theta(x) \, \geq \, \displaystyle{
\lim_{\varepsilon \downarrow 0}
} \ \sup_{x \in X_{\rm AHS}^{\varepsilon}} \, 
\theta^{\, \varepsilon}(x) \, = \,
\displaystyle{
\sup_{\varepsilon > 0}
} \ \sup_{x \in X_{\rm AHS}^{\varepsilon}} \, 
\theta^{\, \varepsilon}(x).
\end{equation}
Without assuming attainment of the suprema, the following result 
shows that inequality in (\ref{eq:equal suprema}) must hold as an equality.  
The noteworthy point of the result 
is its validity under minimal assumptions.   

\begin{proposition} \label{pr:equal suprema} \rm
Suppose that $c$ and $\phi_{ij}$ are continuous for 
all $(i,j)$ and that $\displaystyle{
\sup_{x \in X_{\rm AHS}}
}\, \theta(x)$ is finite (in particular 
$X_{\rm AHS} \neq \emptyset$).
Then $$\sup_{x \in X_{\rm AHS}} \, \theta(x) = \displaystyle{
\lim_{\varepsilon \downarrow 0}
} \ \sup_{x \in X_{\rm AHS}^{\varepsilon}} \, 
\theta^{\, \varepsilon}(x) \, = \,
\displaystyle{
\sup_{\varepsilon > 0}
} \ \sup_{x \in X_{\rm AHS}^{\varepsilon}} \, 
\theta^{\, \varepsilon}(x).$$  Furthermore, if $\displaystyle{
\sup_{x \in X_{\rm AHS}}
} \, \theta(x)$ is attained, say at $x^*$, then 
there exists $\varepsilon_* > 0$ such that $x^*$ is 
a maximizer of $\theta^{\, \varepsilon}(x)$ on 
$X_{\rm AHS}^{\, \varepsilon}$ 
for all $\varepsilon \in ( \, 0, \varepsilon_* \, ]$.
\end{proposition}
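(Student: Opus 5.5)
Since (\ref{eq:equal suprema}) already gives the inequality ``$\geq$'', only the reverse inequality needs proof. The plan is to show that every feasible point $\bar x \in X_{\rm AHS}$ lies in $X^{\varepsilon}_{\rm AHS}$ and has $\theta^{\varepsilon}(\bar x) = \theta(\bar x)$ once $\varepsilon$ is small enough (depending on $\bar x$). Fix $\bar x \in X_{\rm AHS}$ and set
\[
\varepsilon(\bar x) \triangleq \min\{\, |\phi_{ij}(\bar x)| : \phi_{ij}(\bar x) \neq 0, \ i = 0,\ldots,I,\ j \in [J_i] \,\},
\]
with the convention $\varepsilon(\bar x) = +\infty$ when the set is empty. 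This quantity is positive because there are finitely many indices. For $\varepsilon \in (0, \varepsilon(\bar x))$, each value $t = \phi_{ij}(\bar x)$ falls into one of three cases:
\begin{itemize}
\item if $t \geq 0$, then $\onebld_{(-\varepsilon,\infty)}(t) = 1 = \onebld_{[0,\infty)}(t)$;
\item if $t < 0$, then $t \leq -\varepsilon(\bar x) < -\varepsilon$, so both indicators vanish;
\item the case $t = 0$ is already covered by $t \geq 0$.
\end{itemize}
This is just a pointwise use of Lemma~\ref{lm:epsilon_approximation}(iii) with $t$ held fixed; no neighborhood is needed. Consequently every constraint function of $(\mbox{\bf P}^{\varepsilon}_{\rm AHS})$ coincides at $\bar x$ with the corresponding constraint function of (\ref{eq:A_HSCOP}). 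Hence $\bar x \in X^{\varepsilon}_{\rm AHS}$ and $\theta^{\varepsilon}(\bar x) = \theta(\bar x)$.

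The suprema identity then follows directly. Given $\delta > 0$, finiteness of the supremum lets us choose $\bar x \in X_{\rm AHS}$ with $\theta(\bar x) \geq \sup_{X_{\rm AHS}} \theta - \delta$. The previous step yields
\[
\sup_{\varepsilon > 0} \, \sup_{X^{\varepsilon}_{\rm AHS}} \theta^{\varepsilon} \;\geq\; \theta(\bar x) \;\geq\; \sup_{X_{\rm AHS}} \theta - \delta .
\]
Letting $\delta \downarrow 0$ gives the reverse inequality. The equality between the limit and the supremum over $\varepsilon$ holds because, by (\ref{eq:2 epsilons}), the inner supremum is nonincreasing in $\varepsilon$.

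For the attainment statement, take $\varepsilon_* \triangleq \tfrac12 \min(\varepsilon(x^*), 1)$, so that $\varepsilon_* > 0$ is finite. For every $\varepsilon \in (0, \varepsilon_*]$ we then have $x^* \in X^{\varepsilon}_{\rm AHS}$ and $\theta^{\varepsilon}(x^*) = \theta(x^*)$. Any $x \in X^{\varepsilon}_{\rm AHS} \subseteq X_{\rm AHS}$ satisfies
\[
\theta^{\varepsilon}(x) \;\leq\; \theta(x) \;\leq\; \theta(x^*) \;=\; \theta^{\varepsilon}(x^*),
\]
using (\ref{eq:2 epsilons}), so $x^*$ maximizes $\theta^{\varepsilon}$ over $X^{\varepsilon}_{\rm AHS}$.

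No step is a serious obstacle. The only subtle point is that $\varepsilon(\bar x)$ depends on the chosen point, so the argument must select the near-optimal point first and only afterwards choose $\varepsilon$; no uniformity in $\varepsilon$ over $X_{\rm AHS}$ is needed. Continuity of $c$ and $\phi_{ij}$ is not actually used beyond the functions being well defined.
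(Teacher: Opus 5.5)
Your proposal is correct and takes essentially the same approach as the paper. The paper uses a maximizing sequence $\{x^{\nu}\}$ where you use $\delta$-near-maximizers. For each point it picks a pointwise $\varepsilon_{\nu}>0$ at which the open and closed Heaviside values agree, which is your $\varepsilon(\bar x)$ left implicit, and it closes with the same monotonicity of the inner supremum in $\varepsilon$. It proves the attainment part by the same chain $\theta^{\varepsilon}(x)\le\theta(x)\le\theta(x^*)=\theta^{\varepsilon}(x^*)$. Your remark that continuity of $c$ and $\phi_{ij}$ is never used holds for the paper's proof as well.
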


\begin{proof}  
Let $\{ x^{\nu} \} \subset X_{\rm AHS}$ be 
a sequence such that $\displaystyle{
\lim_{\nu \to \infty}
} \, \theta(x^{\nu}) = \displaystyle{
\sup_{x \in X_{\rm AHS}}
} \, \theta(x)$.  For each $\nu$, there exists 
$\varepsilon_{\nu} > 0$
such that $\onebld_{[ \, 0,\infty )}(\phi_{ij}(x^{\nu})) = 
\onebld_{( \, -\varepsilon_{\nu},\infty )}(
\phi_{ij}(x^{\nu}))$ for all
$(i,j)$.  Thus 
$x^{\nu} \in X_{\rm AHS}^{\, \varepsilon_{\nu}}$.  
The sequence
$\{ \varepsilon_{\nu} \}$ can be chosen to converge to zero.  
Hence,
\[
\theta(x^{\nu}) \, = \, \theta^{\, \varepsilon_{\nu}}(x^\nu) 
\, \leq \, \displaystyle{
\sup_{x \in X_{\rm AHS}^{\, \varepsilon_{\nu}}} 
} \, \theta^{\, \varepsilon_{\nu}}(x)
\]
Since $\displaystyle{
\sup_{x \in X_{\rm AHS}^{\, \varepsilon}} 
} \, \theta^{\, \varepsilon}(x)$ is monotonically nonincreasing 
in $\varepsilon$, letting $\nu \to \infty$ yields 
\[
\displaystyle{
\sup_{x \in X_{\rm AHS}}
} \, \theta(x) \, \leq \, \displaystyle{
\limsup_{\nu \to \infty}
} \, \theta(x^{\nu}) \, = \, \displaystyle{
\lim_{\varepsilon \downarrow 0}
} \ \sup_{x \in X_{\rm AHS}^{\, \varepsilon}} \, 
\theta^{\, \varepsilon}(x) = \displaystyle{
\sup_{\varepsilon > 0}
} \ \sup_{x \in X_{\rm AHS}^{\varepsilon}} \, 
\theta^{\, \varepsilon}(x).
\]
Combining with \eqref{eq:equal suprema}, we thus obtain the desired equality.

To prove the last statement, we know that 
there exists $\varepsilon_* > 0$ such that 
\[
\onebld_{[ \, 0,\infty )}(\phi_{ij}(x^*)) = 
\onebld_{( \, -\varepsilon,\infty )}(\phi_{ij}(x^*)),
\epc \forall \, (i,j) \ \mbox{ and } \ \forall \,
\varepsilon \, \in \, ( 0,\varepsilon_* ].
\]
Thus, $x^* \in X_{\rm AHS}^{\, \varepsilon}$ for all such $\varepsilon$;
moreover, $\theta^{\, \varepsilon}(x^*) \, = \, \theta(x^*) 
\, \geq \, \theta(x) \geq 
\theta^{\, \varepsilon}(x)$ for all $ x \, \in \, X_{\rm AHS}$. 
Since $X_{\rm AHS}^{\, \varepsilon} \subseteq X_{\rm AHS}$, $x^*$ is thus a maximizer of $
\theta^{\, \varepsilon}(x)$ on $X_{\rm AHS}^{\, \varepsilon}$. 
\end{proof}




We note an immediate consequence of property (iii) in 
Lemma~\ref{lm:epsilon_approximation}.  Namely,
for a given vector $\bar{x} \in X_{\rm AHS}$, defining the index sets
for $i = 0, 1, \cdots, I$:
\begin{equation} \label{eq:index sets}
\begin{array}{l}
{\cal J}_{i,0}(\bar{x})   \triangleq \left\{ \, j   \mid  
\phi_{ij}(\bar{x})   =   0 \, \right\}, \ 
{\cal J}_{i,>}(\bar{x})  \triangleq   \left\{ \, j   \mid   
\phi_{ij}(\bar{x}) \, >   0 \,  \right\},  \mbox{and} \\ [0.1in]
{\cal J}_{i,0}^{\, -}(\bar{x})  \triangleq   
\left\{ \, j   \mid \psi_{ij}  <   0,  
\phi_{ij}(\bar{x})   =  0 \, \right\}.
\end{array}
\end{equation} 
we may deduce that there exists a neighborhood ${\cal N}$ of $\bar{x}$ 
and a scalar $\bar{\varepsilon} > 0$ (dependent on $\bar{x}$)
such that for all $(i,j)$ with $j \not\in {\cal J}_{i,0}^-(\bar{x})$ 
and $i = 0, 1, \cdots, I$,
\begin{equation} \label{eq:sign invariant}
\psi_{ij}^- \, \onebld_{( \, -\varepsilon,\infty )}( \phi_{ij}(x) ) 
\, = \, \psi_{ij}^- \, \onebld_{[ \, 0,\infty )}( \phi_{ij}(x) ) 
\, = \, 
\psi_{ij}^- \, \onebld_{[ \, 0,\infty )}( \phi_{ij}(\bar{x}) )
\end{equation}
for all $\varepsilon \in ( \, 0, \bar{\varepsilon} \, ]$ and all
$x \in {\cal N}$.  We say that $\bar{x}$ satisfies the 
\emph{local sign-invariance} (LSI) condition if
$\phi_{ij}$ is nonnegative near 
$\bar{x}$ for all $j \in {\cal J}_{i,0}^{\, -}(\bar{x})$ and 
all $i = 0, 1, \cdots, I$.
Below we show that the $\varepsilon$-approximating problem 
$\left( \, \mbox{\bf P}_{\rm AHS}^{\, \varepsilon} \, \right)$ 
is equivalent to 
the A-HSCOP problem \eqref{eq:A_HSCOP} in terms of local maximizer 
under this condition.

\begin{proposition} \label{pr:N&S II locmax AHSCOP} \rm
Let $\bar{x} \in X$ given.  Let $\bar{\varepsilon} > 0$ be such that 
(\ref{eq:sign invariant}) holds in the neighborhood of $\bar x$ for all 
$\varepsilon \in ( \, 0, \bar{\varepsilon} \, ]$ and all 
$i = 0, 1, \cdots, I$. The following two statements hold:

\noindent {\bf (A)}  
If $\bar{x}$ is a local maximizer of (\ref{eq:A_HSCOP}), 
then $\bar{x}$ is a local maximizer of 
$\left( \, \mbox{\bf P}_{\rm AHS}^{\, \varepsilon} \, \right)$ 
for every $\varepsilon \in ( \, 0,\bar{\varepsilon} \, ]$. 

\noindent {\bf (B)}  Conversely, if $\bar{x}$ is a local maximizer of 
$\left( \, \mbox{\bf P}_{\rm AHS}^{\, \varepsilon} \, \right)$ satisfying
the local sign-invariance condition, then $\bar{x}$ is a local maximizer 
of (\ref{eq:A_HSCOP}).
\end{proposition}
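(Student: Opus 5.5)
Both directions reduce to comparing the Heaviside terms of $\theta$ and $\theta^{\varepsilon}$ (and of the constraint functions) on a small neighborhood of $\bar{x}$. Fix $\varepsilon\in(0,\bar{\varepsilon}]$ and let ${\cal N}$ be the neighborhood on which (\ref{eq:sign invariant}) holds. The only terms on which the two problems can differ are the negative-coefficient ones, since the positive ones $\psi_{ij}^+\onebld_{[0,\infty)}(\phi_{ij}(x))$ appear identically in both. Among the negative terms, those with $j\notin{\cal J}^-_{i,0}(\bar{x})$ already coincide on ${\cal N}$ by (\ref{eq:sign invariant}). So everything hinges on the indices $j\in{\cal J}^-_{i,0}(\bar{x})$.

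For \textbf{(A)}, let $\bar{x}$ be a local maximizer of (\ref{eq:A_HSCOP}), with neighborhood ${\cal N}'\subseteq{\cal N}$. I will first show $\bar{x}\in X^{\varepsilon}_{\rm AHS}$ and $\theta^{\varepsilon}(\bar{x})=\theta(\bar{x})$. For $j\in{\cal J}^-_{i,0}(\bar{x})$ we have $\phi_{ij}(\bar{x})=0$, so both $\onebld_{(-\varepsilon,\infty)}$ and $\onebld_{[0,\infty)}$ equal $1$ at $\phi_{ij}(\bar{x})$. The remaining terms agree by (\ref{eq:sign invariant}), which gives both claims. Next, for any $x\in{\cal N}'\cap X^{\varepsilon}_{\rm AHS}$, the monotonicity (\ref{eq:2 epsilons}), i.e.\ Lemma~\ref{lm:epsilon_approximation}(i), gives $x\in X_{\rm AHS}$ and $\theta^{\varepsilon}(x)\le\theta(x)$. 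Combining, $\theta^{\varepsilon}(x)\le\theta(x)\le\theta(\bar{x})=\theta^{\varepsilon}(\bar{x})$, so $\bar{x}$ is a local maximizer of $(\mbox{\bf P}^{\varepsilon}_{\rm AHS})$. Nothing beyond (\ref{eq:sign invariant}) is needed here.

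For \textbf{(B)}, let $\bar{x}$ be a local maximizer of $(\mbox{\bf P}^{\varepsilon}_{\rm AHS})$ on a neighborhood ${\cal N}''$ and satisfy LSI. I shrink ${\cal N}''$ so that it lies in ${\cal N}$ and so that, by LSI, $\phi_{ij}(x)\ge 0$ on it for every $j\in{\cal J}^-_{i,0}(\bar{x})$ and all $i$. On this set, for such $j$, $\onebld_{(-\varepsilon,\infty)}(\phi_{ij}(x))=1=\onebld_{[0,\infty)}(\phi_{ij}(x))$. The other negative terms agree by (\ref{eq:sign invariant}), and the positive terms are identical. Hence $\theta^{\varepsilon}\equiv\theta$ on ${\cal N}''$, and each constraint function of $(\mbox{\bf P}^{\varepsilon}_{\rm AHS})$ coincides there with the corresponding constraint function of (\ref{eq:A_HSCOP}). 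Consequently $X^{\varepsilon}_{\rm AHS}\cap{\cal N}''=X_{\rm AHS}\cap{\cal N}''$, and local maximality of $\bar{x}$ transfers verbatim to (\ref{eq:A_HSCOP}).

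The main obstacle is in (B). Without LSI, points $x$ near $\bar{x}$ with $\phi_{ij}(x)\in(-\varepsilon,0)$ for some $j\in{\cal J}^-_{i,0}(\bar{x})$ are cut from $X^{\varepsilon}_{\rm AHS}$ or penalized in $\theta^{\varepsilon}$, yet they are feasible and better in the original problem. The LSI assumption is exactly what rules these points out. The remaining care is bookkeeping: take ${\cal N}''$ to be the intersection of finitely many neighborhoods (from (\ref{eq:sign invariant}), from LSI, and from local maximality), and check that the identity covers $i=0$ (the objective) as well as every constraint $i\in[I]$.
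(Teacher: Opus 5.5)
Your proof is correct and follows essentially the same route as the paper. In (A) you combine the monotonicity $X^{\varepsilon}_{\rm AHS}\subseteq X_{\rm AHS}$, $\theta^{\varepsilon}\le\theta$ with $\theta^{\varepsilon}(\bar{x})=\theta(\bar{x})$; in (B) you use LSI for $j\in{\cal J}^-_{i,0}(\bar{x})$ and (\ref{eq:sign invariant}) for the remaining indices to make the problems coincide near $\bar{x}$, and your explicit check that $\bar{x}\in X^{\varepsilon}_{\rm AHS}$ in (A), which the paper leaves implicit, is a welcome addition.
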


\begin{proof}
%
(A) Suppose that $\bar{x}$ is a local maximizer of 
(\ref{eq:A_HSCOP}).   Let ${\cal N}$ be a neighborhood 
of $\bar{x}$ within which $\bar{x}$ is optimal and also such 
that (\ref{eq:sign invariant}) holds.
Let $x$ be a feasible solution of
$\left( \, \mbox{\bf P}_{\rm AHS}^{\, \varepsilon} \, \right)$
for an arbitrary $\varepsilon \in ( \, 0,\bar{\varepsilon} \, ]$.  
Then, for all $i = 1, \cdots, I$,
\[ \begin{array}{lll}
A_{\, i \bullet} x + \displaystyle{
\sum_{j=1}^{J_i}
} \, \psi_{ij}^+ \, \onebld_{[ \, 0,\infty )}( \phi_{ij}(x) ) 
& \geq & \displaystyle{
\sum_{j=1}^{J_i}
} \, \psi_{ij}^- \,
\onebld_{( \, -\varepsilon,\infty )}( \phi_{ij}(x) ) + \eta_i 
\\ [0.1in]
& \geq & \displaystyle{
\sum_{j=1}^{J_i}
} \, \psi_{ij}^- \onebld_{[ \, 0,\infty )}( \phi_{ij}(x))
+ \eta_i.
\end{array}
\]
Thus $x$ is feasible to (\ref{eq:A_HSCOP}).  Hence, if 
$x \in {\cal N}$ additionally, then
\[ \begin{array}{l}
c(\bar{x}) + \displaystyle{
\sum_{j=1}^{J_0}
} \, \psi_{0j}^+ \, \onebld_{[ \, 0,\infty )}( \phi_{0j}(\bar{x}) ) 
- \displaystyle{
\sum_{j=1}^{J_0}
} \, \psi_{0j}^- \,
\onebld_{( \, -\varepsilon,\infty )}( \phi_{0j}(\bar{x}) ) \\ [0.1in]
\epc = \, c(\bar{x}) + \displaystyle{
\sum_{j=1}^{J_0}
} \, \psi_{0j} \, \onebld_{[ \, 0,\infty )}( \phi_{0j}(\bar{x}) ) 
\hspace{0.2in} \mbox{by the choice of $\varepsilon$ in (\ref{eq:sign invariant})}
\\ [0.1in]
\epc \geq \, c(x) + \displaystyle{
\sum_{j=1}^{J_0}
} \, \psi_{0j} \, \onebld_{[ \, 0,\infty )}( \phi_{0j}(x) ) 
\hspace{0.2in} \mbox{since 
$\bar{x}$ is a local maximizer of (\ref{eq:A_HSCOP})} \\ [0.1in]
\epc \geq \, c(x) + \displaystyle{
\sum_{j=1}^{J_0}
} \, \psi_{0j}^+ \, \onebld_{[ \, 0,\infty )}( \phi_{0j}(x) ) 
- \displaystyle{
\sum_{j=1}^{J_0}
} \, \psi_{0j}^- \,
\onebld_{( \, -\varepsilon,\infty )}( \phi_{0j}(x) ),
\end{array}
\]
where the last inequality holds by  (\ref{eq:sign invariant}) again,
showing that $\bar{x}$ is the local maximizer for 
$\left( \, \mbox{\bf P}_{\rm AHS}^{\, \varepsilon} \, \right)$.

\gap

\noindent (B)  Without loss of generality, we may assume that 
$\bar{x}$ is optimal for
$\left( \, \mbox{\bf P}_{\rm AHS}^{\, \varepsilon} \, \right)$ 
in ${\cal N}$.  
We claim that $\bar{x}$ is a maximizer of (\ref{eq:A_HSCOP}) within 
this neighborhood by first showing that 
$X_{\rm AHS}^{\, \varepsilon} \cap {\cal N} = X_{\rm AHS} \cap {\cal N}$ 
under the given assumptions.  It suffices to show the inclusion 
$X_{\rm AHS} \cap {\cal N} \subseteq X_{\rm AHS}^{\, \varepsilon} \cap {\cal N}$.
For this purpose, let $x \in X_{\rm AHS} \cap {\cal N}$; we then have,
for $i = 0, 1, \cdots, I$,
\begin{equation} \label{eq:feasibility equivalence} 
\begin{array}{ll}
 \displaystyle{
\sum_{j=1}^{J_i}
} \, \psi_{ij}^- \, \onebld_{[ \, 0,\infty )}( \phi_{ij}(x) )   & = \,  \displaystyle{
\sum_{j \in {\cal J}_{i,0}^-(\bar{x})}
} \, \psi_{ij}^- \onebld_{[ \, 0,\infty )}( \phi_{ij}(x)) +
\displaystyle{
\sum_{j \not\in {\cal J}_{i,0}^-(\bar{x})}
} \, \psi_{ij}^- \onebld_{[ \, 0,\infty )}( \phi_{ij}(x))
\\ [0.3in]
& = \, \underbrace{\displaystyle{
\sum_{j \in {\cal J}_{i,0}^-(\bar{x})}
} \, \psi_{ij}^- \onebld_{( \, -\varepsilon,\infty )}( 
\phi_{ij}(x))}_{\mbox{
by LSI condition}} + \underbrace{\displaystyle{
\sum_{j \not\in {\cal J}_{i,0}^-(\bar{x})}
} \, \psi_{ij}^- \onebld_{( \, -\varepsilon,\infty )}( 
\phi_{ij}(x))}_{\mbox{ by  } \eqref{eq:sign invariant} }\\ [0.1in]
& = \,  \displaystyle{
\sum_{j=1}^{J_i}
} \, \psi_{ij}^- \onebld_{
( \, -\varepsilon,\infty )}( \phi_{ij}(x) ),
\end{array} \end{equation}
proving the equality $X_{\rm AHS}^{\varepsilon} \cap {\cal N} = 
X_{\rm AHS} \cap {\cal N}$. Thus we can show the optimality as follows, 
\[ \begin{array}{l}
c(\bar{x}) + \displaystyle{
\sum_{j=1}^{J_0}
} \, \psi_{0j} \, \onebld_{[ \, 0,\infty )}( \phi_{0j}(\bar{x}) ) 
\\ [0.1in]
\epc \geq \, c(\bar{x}) + \displaystyle{
\sum_{j=1}^{J_0}
} \, \psi_{0j}^+ \, \onebld_{[ \, 0,\infty )}( \phi_{0j}(\bar{x}) ) 
- \displaystyle{
\sum_{j=1}^{J_0}
} \, \psi_{0j}^- \,
\onebld_{( \, -\varepsilon,\infty )}( \phi_{0j}(\bar{x}) ) 
\\ [0.1in]
\epc \geq \, c(x) + \displaystyle{
\sum_{j=1}^{J_0}
} \, \psi_{0j}^+ \, \onebld_{[ \, 0,\infty )}( \phi_{0j}(x) ) - 
\displaystyle{
\sum_{j=1}^{J_0}
} \, \psi_{0j}^- \,
\onebld_{( \, -\varepsilon,\infty )}( \phi_{0j}(x) ) \quad \mbox{ for any } x \in  X_{\rm AHS}^{\varepsilon} \cap {\cal N}
\\ [0.05in]
\epc = c(x) + \displaystyle{
\sum_{j=1}^{J_0}
} \, \psi_{0j} \onebld_{( \, -\varepsilon,\infty )}( \phi_{0j}(x)) 
\epc \mbox{based on  
(\ref{eq:feasibility equivalence})},
\end{array}
\]
proving the desired optimality of $\bar{x}$ for 
(\ref{eq:A_HSCOP}) in $X_{\rm AHS} \cap {\cal N}$.
\end{proof}

\section{The Decomposed Equivalent: Piecewise Affine Decomposition}
\label{sec:PA decomp}

Although the non-upper semicontinuity of the A-HSCOP is satisfactorily
addressed by the approximation of the closed Heaviside function 
$\onebld_{[ \, 0,\infty )}$ by the open
Heaviside function $\onebld_{( \, -\varepsilon,\infty )}$, due to the 
nonconvexity/nonconcavity of the inner piecewise
affine functions $\{\phi_{ij}\}$, it is still hard to solve the 
$\varepsilon$-approximating problem efficiently.  To facilitate the 
computation, we 
exploit the piecewise structure of these functions
and introduce convex/concave surrogates 
%
whereby we lower and upper bound the convex/concave PA functions 
$\phi_{ij}^{\rm cvx}$ and $\phi_{ij}^{\rm cve}$ by one of
its maximizing/minimizing affine pieces at a current feasible vector 
$\bar{x} \in X_{\rm AHS}$. 
Toward this end, we define the following index sets for a 
given $\bar{x} \in X_{\rm AHS}$ and a scalar 
$\delta \geq 0$
\[ \begin{array}{lll}
{\cal K}_{ij}^{\delta}(\bar{x}) & \triangleq &
\left\{ \, k \, \in \, [ K_{ij} ] \, \mid \,
( a_{ij}^k )^{\top} \bar{x} + \alpha_{ij}^k \, \geq \, 
\phi_{ij}^{\rm cvx}(\bar{x}) - \delta \, \right\} \\ [0.15in]
{\cal L}_{ij}^{\delta}(\bar{x}) & \triangleq &
\left\{ \, \ell \, \in \, [ L_{ij} ] \, \mid \,
( b_{ij}^{\, \ell} )^{\top} \bar{x} + \beta_{ij}^{\, \ell} 
\, \leq \, 
\phi_{ij}^{\rm cve}(\bar{x}) + \delta \, \right\};
\end{array} \]
we write ${\cal K}_{ij}(\bar{x})$ and ${\cal L}_{ij}(\bar{x})$ for
${\cal K}_{ij}^0(\bar{x})$ and ${\cal L}_{ij}^0(\bar{x})$,
respectively; these are the maximizing and minimizing index sets of 
$\phi_{ij}^{\rm cvx}(\bar{x})$ and 
$\phi_{ij}^{\rm cve}(\bar{x})$, respectively.  We let

\[ \begin{array}{lll}
{\cal M}^{\boldsymbol{\delta}}(\bar{x}) & \triangleq & \left\{ \, 
( \mathbf{k},\boldsymbol{\ell} ) \, \triangleq \, \left\{ \, 
\left\{ \, ( k_{ij},\ell_{ij} ) \in 
{\cal K}_{ij}^{\, \delta_{ij}}(\bar{x})
\times {\cal L}_{ij}^{\,\delta_{ij}}(\bar{x}) 
\, \right\}_{j=1}^{J_i} 
\, \right\}_{i=0}^I \, \right\} \\ [0.25in]
& = & \displaystyle{
\prod_{i=0}^I
} \ \displaystyle{
\prod_{j=1}^{J_i}
} \, \left[ \ {\cal K}_{ij}^{\, \delta_{ij}}(\bar{x})
\times {\cal L}_{ij}^{\, \delta_{ij}}(\bar{x}) \, \right] 
\, \supseteq \, \displaystyle{
\prod_{i=0}^I
} \ \displaystyle{
\prod_{j=1}^{J_i}
} \, \left[ \ {\cal K}_{ij}(\bar{x})
\times {\cal L}_{ij}(\bar{x}) \, \right] \, \triangleq \, 
{\cal M}(\bar{x}).
\end{array}
\]
The idea of using a positive scalar $\delta$
to enlarge the two 
families of maximizing indices is derived from the enhancement of the
difference-of-convex methodology \cite{PangRazaAlvarado17}.  
An important role of the scalar $\delta$ is the following simple fact
(see also \cite[Lemma~6.1.2]{CuiPang2021}).

\begin{lemma} \label{lm:simple fact} \rm
For any tuple ${\boldsymbol{\delta}} \triangleq \left\{\left\{\delta_{ij}\right\}_{j=1}^{J_i}\right\}_{i=0}^{I}$ 
with each scalar $\delta_{ij}>0$,
there exists a neighborhood ${\cal N}$ of $\bar{x}$ such that
${\cal M}(x) \subseteq {\cal M}(\bar{x}) \subseteq 
{\cal M}^{\boldsymbol{\delta}}(x')$ for all $x, x' \in {\cal N}$. 
Moreover, there exists $\bar{\delta} > 0$ such that 
\[
{\cal K}_{ij}(\bar{x}) \, \times \, {\cal L}_{ij}(\bar{x}) \, = \,
{\cal K}_{ij}^{\, \delta_{ij}}(\bar{x}) \, \times \, 
{\cal L}_{ij}^{\, \delta_{ij}}(\bar{x}),
\epc \forall \, \delta_{ij} \in \left[ \, 0, \bar{\delta} \, \right]
\]
for all $(i,j)$. Thus the two families 
${\cal M}^{\boldsymbol{\delta}}(\bar{x})$ 
and ${\cal M}(\bar{x})$ are equal for all ${\boldsymbol{\delta}}$ with 
each $\delta_{ij} > 0$ sufficiently small.  \hfill $\Box$
\end{lemma}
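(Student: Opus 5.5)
The plan is to prove both claims coordinatewise, one pair $(i,j)$ at a time. There are finitely many pairs and ${\cal M}(\cdot)$ and ${\cal M}^{\boldsymbol{\delta}}(\cdot)$ are Cartesian products of the sets ${\cal K}_{ij}(\cdot)\times{\cal L}_{ij}(\cdot)$, resp. ${\cal K}_{ij}^{\delta_{ij}}(\cdot)\times{\cal L}_{ij}^{\delta_{ij}}(\cdot)$. So it suffices to find, for each fixed $(i,j)$, a neighborhood ${\cal N}_{ij}$ of $\bar x$ with ${\cal K}_{ij}(x)\subseteq{\cal K}_{ij}(\bar x)\subseteq{\cal K}_{ij}^{\delta_{ij}}(x')$ for $x,x'\in{\cal N}_{ij}$ (and the analogous chain for ${\cal L}_{ij}$). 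Then ${\cal N}$ is the finite intersection of the ${\cal N}_{ij}$. By symmetry (replace $b,\beta$ by $-b,-\beta$, turning the min into a max), the ${\cal L}$ case is identical to the ${\cal K}$ case, so I only treat ${\cal K}$.

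Fix $(i,j)$ and write $g_k(x)\triangleq(a_{ij}^k)^\top x+\alpha_{ij}^k$, so $\phi^{\rm cvx}_{ij}=\max_k g_k$. Each $g_k$ and $\phi^{\rm cvx}_{ij}$ is continuous.

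For the first inclusion, consider any $k\notin{\cal K}_{ij}(\bar x)$. It has a strict gap $\phi^{\rm cvx}_{ij}(\bar x)-g_k(\bar x)>0$. By continuity, this gap stays positive on a neighborhood of $\bar x$, so $k\notin{\cal K}_{ij}(x)$ there. Intersecting over the finitely many such $k$ gives ${\cal K}_{ij}(x)\subseteq{\cal K}_{ij}(\bar x)$.

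For the second inclusion, take $k\in{\cal K}_{ij}(\bar x)$, so $g_k(\bar x)-\phi^{\rm cvx}_{ij}(\bar x)=0$. The function $x'\mapsto g_k(x')-\phi^{\rm cvx}_{ij}(x')$ is continuous, so it exceeds $-\delta_{ij}$ near $\bar x$. This uses $\delta_{ij}>0$ essentially. Hence $k\in{\cal K}^{\delta_{ij}}_{ij}(x')$ for $x'$ near $\bar x$, and again we intersect over finitely many $k$.

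For the second statement, define
\[
\bar\delta \triangleq \tfrac12\min_{(i,j)}\min\Bigl\{\min_{k\notin{\cal K}_{ij}(\bar x)}\bigl[\phi^{\rm cvx}_{ij}(\bar x)-g_k(\bar x)\bigr],\ \min_{\ell\notin{\cal L}_{ij}(\bar x)}\bigl[(b^{\ell}_{ij})^\top\bar x+\beta^{\ell}_{ij}-\phi^{\rm cve}_{ij}(\bar x)\bigr]\Bigr\},
\]
with the convention that an empty min is $+\infty$; if every min is empty, any $\bar\delta>0$ works. This is a minimum of finitely many positive numbers, so $\bar\delta>0$. For $\delta_{ij}\le\bar\delta$, no nonmaximizing index meets the $\delta_{ij}$-threshold. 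Together with the monotonicity ${\cal K}^0_{ij}\subseteq{\cal K}^{\delta}_{ij}$, this gives equality of the sets, and the family equality ${\cal M}^{\boldsymbol\delta}(\bar x)={\cal M}(\bar x)$ follows by taking products.

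There is no real obstacle here. The only care needed is to note that the strictness $\delta_{ij}>0$ is what makes the second inclusion hold on a full neighborhood: with $\delta=0$ it would fail at points $x'$ where a maximizing piece at $\bar x$ ceases to be active. Finiteness of the index sets is what permits the intersections of neighborhoods and the positive minimum defining $\bar\delta$.
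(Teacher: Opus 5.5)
Your proof is correct. The reduction to individual pairs $(i,j)$ through the product structure, the strict-gap continuity argument for ${\cal M}(x)\subseteq{\cal M}(\bar{x})$, the use of $\delta_{ij}>0$ for ${\cal M}(\bar{x})\subseteq{\cal M}^{\boldsymbol{\delta}}(x')$, and the half-minimum-gap choice of $\bar\delta$ form the standard continuity-plus-finiteness argument. The paper itself omits the proof (marking the lemma as a simple fact and pointing to \cite[Lemma~6.1.2]{CuiPang2021}), and your argument is essentially what that omitted proof would be.
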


For an arbitrary pair of indices $( k,\ell )$, define the
piecewise affine functions:
\begin{equation} \label{eq:conv/conv phi} 
\begin{array}{l}
\phi_{ij}^{k;+}(x) \, \triangleq \,  ( a_{ij}^{\, k} )^{\top}x + 
\alpha_{ij}^{\, k} + \displaystyle{
\min_{1 \leq \ell^{\, \prime} \leq L_{ij}}
} \, \left[ \, \left( b_{ij}^{\, \ell^{\, \prime}} \right)^{\top}
x + \beta_{ij}^{\, \ell^{\, \prime}} \, \right] \\ [0.2in]
\phi_{ij}^{\, \ell;-}(x) \, \triangleq \, 
\displaystyle{
\max_{1 \leq k^{\, \prime} \leq K_{ij}}
} \, \left[ \, \left( a_{ij}^{\, k^{\, \prime}} \right)^{\top}x + 
\alpha_{ij}^{\, k^{\, \prime}} \, \right] + 
( b_{ij}^{\, \ell} )^{\top}x + \beta_{ij}^{\, \ell}
\end{array}
\end{equation}
with the former one being concave and the latter one being convex.  
We note the following bounds:
\begin{equation} \label{eq:bounding fncs}
\phi_{ij}^{\ell;-}(x) \, \geq \, \phi_{ij}(x) \, \geq \, 
\phi_{ij}^{k;+}(x) \epc \mbox{for any pair of indices $(k,\ell)$ 
and any $x$},
\end{equation}
with equalities holding 
if $(k,\ell) \in {\cal K}_{ij}(x) \times {\cal L}_{ij}(x)$.
For each tuple $( \mathbf{k},\boldsymbol{\ell} ) \in 
{\cal M}^{\boldsymbol{\delta}}(\bar{x})$, define the decomposed 
$\varepsilon$-approximating problem
$\left( \, \mbox{\bf P}_{{\rm AHS}}^{\, \varepsilon;
\mathbf{k},\boldsymbol{\ell}} \, \right)$:
\begin{equation} \label{eq:approximated II AHSCOP}
\underbrace{ \begin{array}{ll}
\displaystyle{
\operatornamewithlimits{\mbox{\bf maximize}}_{x \in P}
} \ & \theta^{\, \varepsilon;\mathbf{k}_0,\boldsymbol{\ell}_0}(x) 
\, \triangleq \, c(x) \, + \displaystyle{
\sum_{j=1}^{J_0}
} \, \psi_{0j}^+ \, \onebld_{[ \, 0,\infty )}\left( 
\phi_{0j}^{\, k_{0j};+}(x) \right) - \displaystyle{
\sum_{j=1}^{J_0}
} \, \psi_{0j}^- \,
\onebld_{( \, -\varepsilon,\infty )}\left( 
\phi_{0j}^{\, \ell_{0j};-}(x) \right) \\ [0.3in]
\mbox{\bf subject to} \ &
A_{\, i \bullet} \, x + \displaystyle{
\sum_{j=1}^{J_i}
} \, \psi_{ij}^+ \, \onebld_{[ \, 0,\infty )}\left( 
\phi_{ij}^{\, k_{ij};+}(x) \right) -  \displaystyle{
\sum_{j=1}^{J_i}
} \, \psi_{ij}^- \, \onebld_{( \, -\varepsilon,\infty )}\left( 
\phi_{ij}^{\, \ell_{ij};-}(x) \right) \geq \eta_i, \, \forall i \in [ \, I \, ].
\end{array}  }_{\mbox{feasible set denoted
$X_{\rm AHS}^{\, \varepsilon;\mathbf{k},\boldsymbol{\ell}}$}}
\end{equation}
Extending the inequalities (\ref{eq:2 epsilons}), 
we obtain following bounds which connect the objective and feasible set of  
$\left( P_{\rm AHS}^{\, \varepsilon;\mathbf{k},\boldsymbol{\ell}} \right)$ 
with  (\ref{eq:A_HSCOP}). Specifically, for all tuples 
$( \mathbf{k},\boldsymbol{\ell} )=\left\{ \, \left\{ \, ( k_{ij},{\ell}_{ij} ) 
\, \right\}_{j=1}^{J_i} \, \right\}_{i=0}^I$: 
\begin{equation} \label{eq:minorization}
\begin{array}{ll}
\theta(x) \, \geq \, \theta^{\, \varepsilon}(x) 
\, \geq \, 
\theta^{\, \varepsilon;
\mathbf{k}_0,\boldsymbol{\ell}_0}(x) 
\epc \mbox{for any $x$}, \\[0.1in]
X_{\rm AHS}^{\, \varepsilon^{\, \prime}; 
\mathbf{k},\boldsymbol{\ell}} \subseteq
X_{\rm AHS}^{\, \varepsilon;
\mathbf{k},\boldsymbol{\ell}} \subseteq X_{\rm AHS}^{\varepsilon} 
\subseteq X_{\rm AHS}, \quad 
\forall  \varepsilon^{\prime} > \varepsilon > 0. 
\end{array}
\end{equation}
By the simple fact that 
\[ \begin{array}{l}
\displaystyle{
\sum_{j=1}^{J_i}
} \, \psi_{ij}^+ \, \onebld_{[ \, 0,\infty )}( \phi_{ij}(x) ) - 
\displaystyle{
\sum_{j=1}^{J_i}
} \, \psi_{ij}^- \,
\onebld_{( \, -\varepsilon,\infty )}( \phi_{ij}(x) ) \\[0.1in]
\epc = \, \displaystyle{
\sum_{j=1}^{J_i}
} \, \max_{1 \leq k \leq K_{ij}} \left\{ \psi_{ij}^+ \, 
\onebld_{[ \, 0,\infty )}\left( 
\phi_{ij}^{\, k;+}(x) \right) \right\}  - \displaystyle{
\sum_{j=1}^{J_i}
} \, \min_{1 \leq \ell \leq L_{ij} } \left\{ \, \psi_{ij}^- \,
\onebld_{( \, -\varepsilon,\infty )}\left( 
\phi_{ij}^{\, \ell;-}(x)  \right)  \right\}, 
\end{array}
\]
we obtain that $X_{\rm AHS}^{\, \varepsilon} = \displaystyle{
\bigcup_{\mathbf{k},\boldsymbol{\ell}}
} \, X_{{\rm AHS}}^{\, \varepsilon;
\mathbf{k},\boldsymbol{\ell}}$. 

With $P$ being a compact set,  the optimal objective value of 
(\ref{eq:approximated II AHSCOP}), which we denote 
$\theta_{\rm opt}^{\, \varepsilon; 
\mathbf{k},\boldsymbol{\ell}}$, 
is finite if the problem is feasible.
Let
\[ \begin{array}{lll}
\theta_{\rm \max}^{\, \varepsilon;\boldsymbol{\delta}}(\bar{x}) 
& \triangleq &
{\bf maximum}\left\{ \, \theta_{\rm opt}^{\, \varepsilon;
\mathbf{k},\boldsymbol{\ell}} \, \mid \, 
( \mathbf{k},\boldsymbol{\ell} ) \in 
{\cal M}^{\boldsymbol{\delta}}(\bar{x}) \, \right\} \\ [0.1in]
& \geq & {\bf maximum}\left\{ \, 
\theta_{\rm opt}^{\, \varepsilon;
\mathbf{k},\boldsymbol{\ell}} \, \mid \, 
( \mathbf{k},\boldsymbol{\ell} ) \in {\cal M}(\bar{x}) \, \right\}, 
\epc \mbox{since ${\cal M}(\bar{x}) \subseteq 
{\cal M}^{\boldsymbol{\delta}}(\bar{x})$}.
\end{array} \]
If $\bar{x}$ is a global maximizer of 
(\ref{eq:A_HSCOP}), then by (\ref{eq:minorization}), we have for all 
index tuples $( \mathbf{k}_0,\boldsymbol{\ell}_0 )$ and $x \in X_{\rm AHS}$,
\[
\theta(\bar{x}) \, \geq \, \theta(x) \, \geq \, 
\theta^{\, \varepsilon;
\mathbf{k}_0,\boldsymbol{\ell}_0}(x). 
\]  
Taking $x$ over all feasible vectors of 
$\left( \, \mbox{\bf P}_{\rm AHS}^{\, \varepsilon; 
\mathbf{k},\boldsymbol{\ell}} \, \right)$ readily yields $
\theta(\bar{x}) \, \geq \, 
\theta_{\rm \max}^{\, \varepsilon;\boldsymbol{\delta}}(\bar{x})$ 
for all $\varepsilon \, > \, 0$ and $\delta \, \geq \, 0$.

We will employ a 
regularization of the problem (\ref{eq:approximated II AHSCOP}) 
by a scalar $\rho > 0$ without changing
the feasible set 
$X_{\rm AHS}^{\, \varepsilon;\mathbf{k},\boldsymbol{\ell}}$,
resulting in the following  decomposed $\varepsilon$-approximating problem, which we denote
$\left( \, \mbox{\bf P}_{\rho}^{\, \varepsilon; 
\mathbf{k},\boldsymbol{\ell}}(\bar{x}) \, \right)$:
\begin{equation} \label{eq:approximated II AHSCOP regularized}
\begin{array}{l}
\displaystyle{
\operatornamewithlimits{\mbox{\bf maximize}}_{x \in X_{\rm AHS}^{\varepsilon; \mathbf{k}, \boldsymbol{\ell}}}
} \ \theta_{\rho}^{\, \varepsilon;
\mathbf{k}_0,\boldsymbol{\ell}_0}(x;\bar{x}) 
\, \triangleq \,  \theta^{\, \varepsilon;\mathbf{k}_0,\boldsymbol{\ell}_0}(x)
- \displaystyle{
\frac{\rho}{2}
} \, \| x - \bar{x} \|_2^2  
\end{array}
\end{equation}
Since $\onebld_{( \, -\varepsilon,\infty )}(t) = 
1 - \onebld_{[ \, 0,\infty )}(-t - \varepsilon)$ for all 
$t \in \mathbb{R}$,
we obtain the following equivalent closed Heaviside formulation of the 
objective and constraint functions in the above problem 
\begin{equation} \label{eq:usc approximated II AHSCOP regularized}
\begin{array}{ll}
\theta_{\rho}^{\, \varepsilon;
\mathbf{k}_0,\boldsymbol{\ell}_0}(x;\bar{x}) 
\, =  \,& c(x) - \displaystyle{
\frac{\rho}{2}
} \, \| \, x - \bar{x} \, \|_2^2 - {\displaystyle{
\sum_{j=1}^{J_0}
} \, \psi_{0j}^-} \, + \\
& \displaystyle{
\sum_{j=1}^{J_0}
} \, \psi_{0j}^+ \, \onebld_{[\, 0,\infty )}\left( 
\phi_{0j}^{\, k_{0j};+}(x) \right) + \displaystyle{
\sum_{j=1}^{J_0}
} \, \psi_{0j}^- \,
\onebld_{[ \, 0,\infty )}\left( 
-\phi_{0j}^{\, \ell_{0j};-}(x) - \varepsilon \right).
\end{array}
\end{equation}

Proposition~\ref{pr:N&S III locmax AHSCOP} below establishes the
necessary and sufficient 
conditions for a feasible vector of (\ref{eq:A_HSCOP}) 
to be its global/local maximizer.  The first part of the proposition 
below establishes necessary 
conditions for a vector $\bar{x}$ to be a global/local maximizer 
of (\ref{eq:A_HSCOP}), 
respectively, in terms of the problems 
$\left( \, \mbox{\bf P}_{\rm AHS}^{\, \varepsilon; 
\mathbf{k},\boldsymbol{\ell}} \, \right)$, whereas the second provides 
sufficient conditions for the local optimality of $\bar{x}$ in 
terms of the local optimality of the decomposed
problems (\ref{eq:approximated II AHSCOP}) under the LSI condition of 
the vector $\bar{x}$ of interest; this part is the same as part (B) of
Proposition~\ref{pr:N&S II locmax AHSCOP}.  The third part shows the 
equivalence between the regularized HSCOP and original HSCOP in terms 
of local maximizers. 
It is noteworthy that the equivalence holds when the regularization is 
applied to a non-semicontinuous optimization problem, in contrast to the 
classical equivalence for convex programs.  

\begin{proposition} \label{pr:N&S III locmax AHSCOP} \rm
For any arbitrary $\bar{x} \in X_{\rm AHS}$, the following three 
statements hold.

\noindent {\bf (A)} If $\bar{x}$ is a local maximizer of 
(\ref{eq:A_HSCOP}), then there
exists $\bar{\varepsilon} > 0$ (dependent on $\bar{x}$) such that
for all 
$\varepsilon \in \left( \, 0, \bar{\varepsilon} \, \right]$,
$\bar{x}$ is a local maximizer of 
$\left( \, \mbox{\bf P}_{\rm AHS}^{\, \varepsilon; 
\mathbf{k},\boldsymbol{\ell}} \, \right)$ for all tuples 
$( \mathbf{k},\boldsymbol{\ell} ) \in  {\cal M}(\bar{x})$.

\noindent {\bf (B)} Conversely,  if   $\bar{x}$ is a local maximizer of
$\left( \, \mbox{\bf P}_{\rm AHS}^{\, \varepsilon; 
\mathbf{k},\boldsymbol{\ell}} \, \right)$ for all tuples 
$( \mathbf{k},\boldsymbol{\ell} ) \in  {\cal M}(\bar{x})$, suppose that 
$\bar x$ satisfies the LSI condition, then $\bar{x}$ is a local maximizer 
of (\ref{eq:A_HSCOP}).

\noindent {\bf (C)}  $\bar x$ is a local maximizer to problem 
\eqref{eq:A_HSCOP} if and only if $\bar x$ is a local maximizer to the 
regularized problem: $\underset{x\in X_{\rm AHS}}{\mbox{\bf maximize}}
~\displaystyle{\theta_{\rm AHS}(x) - \frac{\rho}{2}\|x- \bar x\|_2^2}$. 
\end{proposition}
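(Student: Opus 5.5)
For part (A), I will combine Proposition~\ref{pr:N&S II locmax AHSCOP}(A) with the sandwich bounds (\ref{eq:minorization}) and (\ref{eq:bounding fncs}). Let $\bar{x}$ be a local maximizer of (\ref{eq:A_HSCOP}). Choose $\bar{\varepsilon}$ as in (\ref{eq:sign invariant}) and a neighborhood ${\cal N}$ on which $\bar{x}$ is optimal for (\ref{eq:A_HSCOP}).

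Fix $\varepsilon\in(0,\bar{\varepsilon}]$ and $(\mathbf{k},\boldsymbol{\ell})\in{\cal M}(\bar{x})$. First, $\bar{x}$ is feasible for the decomposed problem. Indeed, because $(k_{ij},\ell_{ij})\in{\cal K}_{ij}(\bar{x})\times{\cal L}_{ij}(\bar{x})$, equality holds in (\ref{eq:bounding fncs}) at $\bar{x}$, so $\phi_{ij}^{k_{ij};+}(\bar{x})=\phi_{ij}^{\ell_{ij};-}(\bar{x})=\phi_{ij}(\bar{x})$. By (\ref{eq:sign invariant}) the $\varepsilon$-terms at $\bar{x}$ then coincide with the closed-Heaviside terms for every $j$: for $j\notin{\cal J}^-_{i,0}(\bar{x})$ directly, and for $j\in{\cal J}^-_{i,0}(\bar{x})$ both indicators equal $1$ since $\phi_{ij}(\bar{x})=0$. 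Hence $\bar{x}\in X_{\rm AHS}^{\varepsilon;\mathbf{k},\boldsymbol{\ell}}$ and $\theta^{\varepsilon;\mathbf{k}_0,\boldsymbol{\ell}_0}(\bar{x})=\theta(\bar{x})$.

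Second, for any $x\in X_{\rm AHS}^{\varepsilon;\mathbf{k},\boldsymbol{\ell}}\cap{\cal N}$, (\ref{eq:minorization}) gives $x\in X_{\rm AHS}$. Therefore
\[
\theta^{\varepsilon;\mathbf{k}_0,\boldsymbol{\ell}_0}(x)\le\theta(x)\le\theta(\bar{x})=\theta^{\varepsilon;\mathbf{k}_0,\boldsymbol{\ell}_0}(\bar{x}).
\]
This argument is essentially a chain of inequalities and I expect no obstacle in it.

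For part (B), I intend to reduce to Proposition~\ref{pr:N&S II locmax AHSCOP}(B). This requires showing that $\bar{x}$ is a local maximizer of $(\mbox{\bf P}_{\rm AHS}^{\varepsilon})$, where $\varepsilon$ is small enough that (\ref{eq:sign invariant}) holds. Suppose not. Then there is a sequence $x^\nu\to\bar{x}$ with $x^\nu\in X^{\varepsilon}_{\rm AHS}$ and $\theta^\varepsilon(x^\nu)>\theta^\varepsilon(\bar{x})$. For each $\nu$, pick $(\mathbf{k}^\nu,\boldsymbol{\ell}^\nu)\in{\cal M}(x^\nu)$, that is, indices that are active at $x^\nu$. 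By Lemma~\ref{lm:simple fact}, ${\cal M}(x^\nu)\subseteq{\cal M}(\bar{x})$ for $\nu$ large. Since equality holds in (\ref{eq:bounding fncs}) at $x^\nu$ for active indices, we get $x^\nu\in X_{\rm AHS}^{\varepsilon;\mathbf{k}^\nu,\boldsymbol{\ell}^\nu}$ and $\theta^{\varepsilon;\mathbf{k}_0^\nu,\boldsymbol{\ell}_0^\nu}(x^\nu)=\theta^\varepsilon(x^\nu)$. The finiteness of ${\cal M}(\bar{x})$ lets us pass to a subsequence with a constant tuple. Along that subsequence, $\bar{x}$ is feasible for that decomposed problem, with value $\theta^\varepsilon(\bar{x})$ by the same equality at $\bar{x}$. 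This contradicts local optimality for that tuple. The LSI condition then allows invoking Proposition~\ref{pr:N&S II locmax AHSCOP}(B).

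The delicate point in (B) is matching $\varepsilon$. The hypothesis is for some $\varepsilon$, whereas Proposition~\ref{pr:N&S II locmax AHSCOP} needs $\varepsilon\le\bar{\varepsilon}$. I would first try to note that under LSI the proof of Proposition~\ref{pr:N&S II locmax AHSCOP}(B) only uses (\ref{eq:sign invariant}) near $\bar{x}$. If needed, I would state (B) for $\varepsilon\in(0,\bar{\varepsilon}]$.

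For part (C), the direction "local maximizer of (\ref{eq:A_HSCOP}) implies local maximizer of the regularized problem" is immediate. This is because subtracting $\frac{\rho}{2}\|x-\bar{x}\|^2\ge0$ leaves the value at $\bar{x}$ unchanged. For the converse, the key is that $\theta_{\rm AHS}(x)-c(x)$ takes finitely many values. I would argue by contradiction: take $x^\nu\to\bar{x}$ feasible with $\theta_{\rm AHS}(x^\nu)>\theta_{\rm AHS}(\bar{x})$, and pass to a subsequence on which the Heaviside sum is constant, equal to $h$.

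\emph{Case 1:} $h>h(\bar{x})$, the Heaviside sum at $\bar{x}$. Then there is a fixed positive gap, while both $c(x^\nu)-c(\bar{x})$ and the quadratic term tend to $0$ by continuity. So the regularized objective eventually exceeds its value at $\bar{x}$, a contradiction.

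\emph{Case 2:} $h\le h(\bar{x})$. Then $c(x^\nu)>c(\bar{x})$, and we need $c(x^\nu)-c(\bar{x})>\frac{\rho}{2}\|x^\nu-\bar{x}\|^2$. This does not follow without structure on $c$, so it is the main obstacle. I would use that $c$ is linear (or at least locally Lipschitz and concave-like) and exploit the piecewise-affine structure of the $\phi_{ij}$: the set $\{x\in P: \text{Heaviside pattern}=\text{that of } x^\nu\}$ near $\bar{x}$ is a finite union of polyhedral pieces containing $\bar{x}$ in their closure. Then I would move from $\bar{x}$ along rays where the increase of $c$ is first order while the penalty is second order. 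If $c$ is only continuous, I suspect (C) may require this linear/PA assumption, and I would make it explicit.
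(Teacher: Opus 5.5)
Your proposal is correct in substance, and where it departs from the paper it does so usefully.

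\textbf{Part (A).} This is the paper's argument.

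\textbf{Part (B).} You use the same ingredients as the paper: Lemma~\ref{lm:simple fact} giving ${\cal M}(x)\subseteq{\cal M}(\bar x)$ near $\bar x$, equality of decomposed and undecomposed functions at an active tuple, and finiteness of ${\cal M}(\bar x)$. You package them differently, though. You first show, without LSI and for any $\varepsilon$, that local optimality for every decomposed problem with $(\mathbf{k},\boldsymbol{\ell})\in{\cal M}(\bar x)$ yields local optimality for $(\mbox{\bf P}_{\rm AHS}^{\varepsilon})$. Only then do you call Proposition~\ref{pr:N&S II locmax AHSCOP}(B). The paper instead interleaves the LSI set equality $X_{\rm AHS}\cap{\cal N}=X^{\varepsilon}_{\rm AHS}\cap{\cal N}$ with the choice of an active tuple at each $x$. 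Your version isolates exactly where LSI and small $\varepsilon$ enter.

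On your quantifier worry, the fallback is forced, not optional. Statement (B) fails for large $\varepsilon$, and the paper's proof also tacitly needs $\varepsilon\le\bar\varepsilon$ through (\ref{eq:sign invariant}). Take $n=1$, $P=[-1,1]$, $c(x)=x$, no Heaviside terms in the objective, and the single constraint $-\onebld_{[0,\infty)}(x-1)\ge-\frac12$. Then:
\begin{itemize}
\item $X_{\rm AHS}=[-1,1)$, whereas for $\varepsilon=1$ one gets $X^{\varepsilon}_{\rm AHS}=[-1,0]$;
\item ${\cal M}(0)$ is a singleton, and LSI holds vacuously at $\bar x=0$ because $\phi(0)=-1$;
\item $\bar x=0$ maximizes the decomposed problem but is not a local maximizer of (\ref{eq:A_HSCOP}).
\end{itemize}

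\textbf{Part (C).} The paper proves the ``if'' direction through local star-shapedness of $X_{\rm AHS}$ (via the tangent-cone description in \cite{HanCuiPang24}) and of $\mbox{hypo}(\theta_{\rm AHS})$ at $(\bar x,\theta_{\rm AHS}(\bar x))$. It then compares $\tau(\theta_{\rm AHS}(x)-\theta_{\rm AHS}(\bar x))$ with $\frac{\rho}{2}\tau^2\|x-\bar x\|_2^2$ and lets $\tau\downarrow0$. Your contradiction argument with a finite-valuedness case split runs the same first-order-versus-second-order mechanism. It has the advantage of handling jumps of the Heaviside part explicitly; the paper glosses over these by assuming $\theta_{\rm AHS}(x)\in{\cal N}_{\bar t}$ for all $x$ near $\bar x$.

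To finish your Case 2, make two things precise.
\begin{itemize}
\item The subcase $h<h(\bar x)$ is void, since a fixed gap cannot survive $c(x^\nu)\to c(\bar x)$. So only $h=h(\bar x)$ remains. This matters because a ray argument cannot beat $\theta_{\rm AHS}(\bar x)$ otherwise.
\item Use the polyhedral structure in the following form. After a further subsequence, all $x^\nu$ lie in one set $Q$ on which every $\phi_{ij}$ is affine with a fixed sign pattern. $Q$ is cut out by finitely many strict and non-strict linear inequalities, and on $Q$ feasibility reads $A_{i\bullet}x\ge\eta_i-H_i$ with constants $H_i$. Each $x^\nu$ satisfies all of these, and $\bar x$ satisfies their non-strict versions as a limit. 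Hence every point $\bar x+\tau(x^\nu-\bar x)$ with $\tau\in(0,1]$ lies in $Q\cap X_{\rm AHS}$, using convexity of $P$.
\end{itemize}
For linear (or concave) $c$, the regularized objective at such a point then exceeds $\theta_{\rm AHS}(\bar x)$ by at least $\tau(c(x^\nu)-c(\bar x))-\frac{\rho}{2}\tau^2\|x^\nu-\bar x\|_2^2$, which is positive for small $\tau$. This is elementary and bypasses the tangent-cone machinery.

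Finally, your suspicion about $c$ is right, but the property needed is concavity of $c$ along rays from $\bar x$, not local Lipschitz continuity. With $P=[-1,1]$, $c(x)=x^2$, no Heaviside terms and $\rho>2$, the point $\bar x=0$ is a local maximizer of the regularized problem but not of (\ref{eq:A_HSCOP}). The paper relies on this concavity implicitly in its hypograph star-shape step.
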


\begin{proof}  (A) There exists $\bar{\varepsilon} > 0$ such that
$\onebld_{( \, -\varepsilon,\infty )}(\phi_{ij}(\bar{x})) = 
\onebld_{[ \, 0,\infty )}(\phi_{ij}(\bar{x}))$ for all 
$\varepsilon \in ( 0,\bar{\varepsilon} ]$ and all $(i,j)$. 
[This is (\ref{eq:limit indicator}) if $\phi_{ij}(\bar{x}) \neq 0$; 
the equality clearly holds if $\phi_{ij}(\bar{x}) = 0$.]  
Hence, since $\bar{x}$ is feasible
to (\ref{eq:A_HSCOP}), it follows that $\bar{x}$ must
be feasible to  $\left( \, \mbox{\bf P}_{\rm AHS}^{\, \varepsilon;
\mathbf{k},\boldsymbol{\ell}} \, \right)$ for all pairs 
$( \mathbf{k},\boldsymbol{\ell} ) \in {\cal M}(\bar{x})$. 
For any pair of indices
$( k_{0j},\ell_{\, 0j} ) \in {\cal K}_{0j}(\bar{x}) \times 
{\cal L}_{0j}(\bar{x})$ for $j = 1, \cdots, J_0$, we have
$\phi_{0j}^{\, k_{0j};+}(\bar{x}) = 
\phi_{0j}^{\, \ell_{0j};-}(\bar{x}) = \phi_{0j}(\bar{x})$.   
Since any $x$ feasible to 
$\left( \, \mbox{\bf P}_{\rm AHS}^{\, \varepsilon; 
\mathbf{k},\boldsymbol{\ell}}\, \right)$ must be feasible 
to (\ref{eq:A_HSCOP}), thus if such an
$x$ is sufficiently close to $\bar{x}$, then
\[ \begin{array}{l}
c(\bar{x}) +
\displaystyle{
\sum_{j=1}^{J_0}
} \, \psi_{0j}^+ \, \onebld_{[ \, 0,\infty )}\left( 
\phi_{0j}^{\, k_{0j};+}(\bar{x}) \right) - \displaystyle{
\sum_{j=1}^{J_0}
} \, \psi_{0j}^- \,
\onebld_{( \, -\varepsilon,\infty )}\left( 
\phi_{0j}^{\, \ell_{0j};-}(\bar{x}) \right) \\ [0.15in]
\epc = \, c(\bar{x}) + \displaystyle{
\sum_{j=1}^{J_0}
} \, \psi_{0j} \, \onebld_{[ \, 0,\infty )}( \phi_{0j}(\bar{x}) ) 
\epc \mbox{by the above remarks} \\ [0.15in]
\epc \geq \, c(x) + \displaystyle{
\sum_{j=1}^{J_0}
} \, \psi_{0j} \, \onebld_{[ \, 0,\infty )}( \phi_{0j}(x) ) 
\hspace{0.15in} \mbox{since 
$\bar{x}$ is a local maximizer of (\ref{eq:A_HSCOP})} 
\\ [0.15in]
\epc \geq \, c(x) + \displaystyle{
\sum_{j=1}^{J_0}
} \, \psi_{0j}^+ \, \onebld_{[ \, 0,\infty )}( 
\phi_{0j}^{k_{0j};+}(x) ) - \displaystyle{
\sum_{j=1}^{J_0}
} \, \psi_{0j}^- \,
\onebld_{( \, -\varepsilon,\infty )}( \phi_{0j}^{\ell_{0j};-}(x) )
\epc \mbox{by (\ref{eq:minorization}).}
\end{array}
\]
This establishes (A).

\noindent (B)  Since ${\cal M}(\bar{x})$ is a finite set,
we may assume without loss of generality that $\bar{x}$ is 
a maximizer of
$\left( \, \mbox{\bf P}_{\rm AHS}^{\, \varepsilon; 
\mathbf{k},\boldsymbol{\ell}} \, \right)$ within
the neighborhood ${\cal N}$ for all tuples 
$( \mathbf{k},\boldsymbol{\ell} ) \in  {\cal M}(\bar{x})$.
We may further assume that ${\cal N}$ is such that 
Lemma~\ref{lm:simple fact} is valid.  Following the same proof for part (B) 
of Proposition~\ref{pr:N&S II locmax AHSCOP}, under  LSI condition,  we 
obtain that  
$X_{\rm AHS} \cap {\cal N} = X_{\rm AHS}^{\varepsilon} \cap {\cal N}$. 
Let $x \in X_{\rm AHS} \cap {\cal N}$.
Let the tuple $( \mathbf{k},\boldsymbol{\ell} ) \in {\cal M}(x)$ 
be such that
$\phi_{ij}(x) = \phi_{ij}^{k_{ij};+}(x) = 
\phi_{ij}^{\ell_{ij};-}(x)$ for all
pairs $(i,j)$. Thus $x \in X_{\rm AHS}^{\, \varepsilon; 
\mathbf{k},\boldsymbol{\ell}}  \cap \cal N$. By Lemma~\ref{lm:simple fact}, 
this pair $( \mathbf{k},\boldsymbol{\ell} ) \in {\cal M}(\bar{x})$.    Since
$\bar{x}$ is optimal for 
$\left( \, \mbox{\bf P}_{\rm AHS}^{\, \varepsilon; 
\mathbf{k},\boldsymbol{\ell}} \, \right)$, we have
\[ \begin{array}{l}
c(\bar{x}) + \displaystyle{
\sum_{j=1}^{J_0}
} \, \psi_{0j} \, \onebld_{[ \, 0,\infty )}\left( 
\phi_{0j}(\bar{x}) \right) \\ [0.2in]
\epc \geq \, c(\bar{x}) + \displaystyle{
\sum_{j=1}^{J_0}
} \, \psi_{0j}^+ \, \onebld_{[ \, 0,\infty )}\left( 
\phi_{0j}(\bar{x}) \right) - \displaystyle{
\sum_{j=1}^{J_0}
} \, \psi_{0j}^- \,\onebld_{( \, -\varepsilon,\infty )}\left( 
\phi_{0j}(\bar{x}) \right) \\ [0.2in]
\epc = \, c(\bar{x}) + \displaystyle{
\sum_{j=1}^{J_0}
} \, \psi_{0j}^+ \, \onebld_{[ \, 0,\infty )}\left( 
\phi_{0j}^{\, k_{0j};+}(\bar{x}) \right) - \displaystyle{
\sum_{j=1}^{J_0}
} \, \psi_{0j}^- \,\onebld_{( \, -\varepsilon,\infty )}\left( 
\phi_{0j}^{\, \ell_{0j};-}(\bar{x}) \right) \\ [0.2in]
\epc \geq \, c(x) + \displaystyle{
\sum_{j=1}^{J_0}
} \, \psi_{0j}^+ \, \onebld_{[ \, 0,\infty )}\left( 
\phi_{0j}^{\, k_{0j};+}(x) \right) - \displaystyle{
\sum_{j=1}^{J_0}
} \, \psi_{0j}^- \,\onebld_{( \, -\varepsilon,\infty )}\left( 
\phi_{0j}^{\, \ell_{0j};-}(x) \right),\\ [0.2in]
\epc = \, c(x) + \displaystyle{
\sum_{j=1}^{J_0}
} \, \psi_{0j} \, \onebld_{[ \, 0,\infty )}\left( 
\phi_{0j}(x) \right), 
\end{array}
\]
proving the desired optimality of $\bar{x}$ 
for (\ref{eq:A_HSCOP}) in 
$X_{\rm AHS} \cap {\cal N}$.

\noindent \textbf{(C)} In turn,
to prove the local equivalence, it suffices to show 
the ``if'' implication.
Suppose that ${\cal N}_{\bar{x}}$ be a neighborhood of $\bar{x}$ within
which $\bar{x}$ maximizes $\theta_{\rm AHS}(x) -\displaystyle{
\frac{\rho}{2}
} \, \| \, x - \bar{x} \, \|_2^2$ for all 
$x \in X_{\rm AHS} \cap {\cal N}_{\bar{x}}$.  Then the pair
\[ \begin{array}{lll}
( \, \bar{x},\theta_{\rm AHS}(\bar{x}) \, ) \, \in & 
\displaystyle{
\operatornamewithlimits{\mbox{\bf argmax}}_{
(x,t) \in \mathbb{R}^{n+1}}
} & t - \displaystyle{
\frac{\rho}{2}
} \, \| \, x - \bar{x} \, \|_2^2 \\ [0.2in] 
& \mbox{\bf subject to} &
(x,t) \, \in \, \mbox{hypo}(\theta_{\rm AHS}) \cap 
\left[ \, ( X_{\rm AHS} \cap {\cal N}_{\bar{x}} ) \, \times \, \mathbb{R} 
\, \right],
\end{array} \]
where $\mbox{hypo}(\theta_{\rm AHS})$ denotes the hypograph of the function
$\theta_{\rm AHS}$.
We first show the local star-shape of $X_{\rm AHS}$ at $\bar{x}$; i.e., 
for $x \in X_{\rm AHS}$ that is sufficiently close to $\bar x$, there exists 
$\bar \tau$ such that $\bar x + \tau (x-\bar x) \in X_{\rm AHS}$ for any 
$\tau \in [0, \bar \tau]$. By \cite{HanCuiPang24}, for all 
$x \in X_{\rm AHS}$ sufficiently close to $\bar{x}$, 
the vector $x - \bar{x}$ belongs to the tangent cone 
${\cal T}(X_{\rm AHS};\bar{x})$ of $X_{\rm AHS}$ at $\bar{x}$; moreover,
for all $i = 1, \cdots, I$,
\[
A_{\, i \bullet} \, \bar{x} + \displaystyle{
\sum_{j \in {\cal J}_{i,0}(\bar{x})}
} \, \psi_{ij} \, 
\onebld_{[ \, 0,\infty )}( 
\phi_{ij}^{\, \prime}(\bar{x};x - \bar{x}) ) + \displaystyle{
\sum_{j \in {\cal J}_{i,>}(\bar{x})}
} \, \psi_{ij} \, \geq \, \eta_i 
\]
and if equality holds, then
$A_{\, i \bullet} \, ( x - \bar{x}) \geq 0$.
It then follows that with $x^{\tau} \triangleq
\bar{x} + \tau ( x - \bar{x} )$,
we have for all $x$ sufficiently close to $\bar{x}$ and $\tau > 0$ 
sufficiently small,
\[ 
\begin{array}{l}
A_{\, i \bullet} \, x^{\tau} + \displaystyle{
\sum_{i=1}^{J_i}
} \, \psi_{ij} \, 
\onebld_{[ \, 0,\infty )}( \phi_{ij}(x^{\tau})) - \eta_i \\ [0.2in]
= \, \left(  A_{\, i \bullet} \, \bar{x} + \displaystyle{
\sum_{j \in {\cal J}_{i,0}(\bar{x})}
} \, \psi_{ij} \, \onebld_{[ \, 0,\infty )}( 
\phi_{ij}^{\, \prime}(\bar{x};x - \bar{x}) ) +
\displaystyle{
\sum_{j \in {\cal J}_{i,>}(\bar{x})}
} \, \psi_{ij}  - \eta_i \, \right) + 
\tau \, A_{\, i \bullet} \, ( x - \bar{x} ) \geq 0
\end{array} \]
Thus $X_{\rm AHS}$ has the star-shape property, and similarly, we also 
have the local star-shape property of $\mbox{hypo}(\theta_{\rm AHS})$ at the
pair $\bar{z} \triangleq (\bar{x},\bar{t})$ where 
$\bar{t} \triangleq \theta_{\rm AHS}(\bar{x})$. 

We may assume without loss
of generality that the neighborhood ${\cal N}_{\bar{x}}$
is such that there exists 
a neighborhood ${\cal N}_{\bar{t}}$ of $\bar{t}$
satisfying:  $ \theta_{\rm AHS}(x) \in {\cal N}_{\bar{t}}$ for 
all $x \in {\cal N}_{\bar{x}}$; moreover, for any 
$(x,t) \in {\cal N}_{\bar{x}} \times {\cal N}_{\bar{t}}$, 
a scalar $\bar{\tau} > 0$ exists such that 
$\bar{x} + \tau ( x - \bar{x} ) \in X_{\rm AHS}$ and 
$(\bar{x},\bar{t}) + \tau (x - \bar{x},t -\bar{t}) \in 
\mbox{hypo}( \theta_{\rm AHS})$
for all $\tau \in [ \, 0,\bar{\tau} \, ]$.    Hence, for all 
$x \in {\cal N}_{\bar{x}}$. the pair
$(\bar{x},\bar{t}) + \tau (x - \bar{x}, \theta_{\rm AHS}(x) - \bar{t})$ 
belongs to $\mbox{hypo}(\theta_{\rm AHS})$.  Thus if
$x \in X_{\rm AHS} \cap {\cal N}_{\bar{x}}$, we have
\[
\bar{t} + \tau (  \theta_{\rm AHS}(x) - \bar{t} ) - \displaystyle{
\frac{\rho}{2}
} \, \tau^2 \, \| \, x - \bar{x} \, \|_2^2 \, \leq \,  \theta_{\rm AHS}(\bar{x}),
\epc \forall \, \tau \, \in \, [ \, 0,\bar{\tau} \, ],
\]
which yields $ \theta_{\rm AHS}(x) -  \theta_{\rm AHS}(\bar{x}) - \displaystyle{
\frac{\rho}{2}
} \, \tau \, \| \, x - \bar{x} \|_2^2 \leq 0$ for all 
$\tau \in [ \, 0,\bar{\tau} \, ]$.  Passing to the limit
$\tau \downarrow 0$, we deduce $ \theta_{\rm AHS}(x) \leq 
\theta_{\rm AHS}(\bar{x})$, establishing
that $\bar{x}$ is a local maximizer of $ \theta_{\rm AHS}$ on $X$. 
\end{proof}

\section{Iterative Shrinkage Methods for the A-HSCOP} \label{sec:ISA-PIP}

Based on the approximation and decomposition strategies in 
Sections~\ref{sec:approx formulation} and \ref{sec:PA decomp}, we design 
two IP-based algorithms for solving
the A-HSCOP (\ref{eq:A_HSCOP}) with mixed signed
$\psi_{ij}$ and PA internal functions $\phi_{ij}$.  
Both algorithms employ a sequence of shrinking
$\{ \varepsilon_{\nu} \} \downarrow 0$ and 
solve subproblems  
$\left( \, \mbox{\bf P}_{\rho}^{\, \varepsilon_{\nu}; 
\mathbf{k},\boldsymbol{\ell}}(\bar{x}^{\nu}) \, \right)$, to either global 
or local optimality.  The first algorithm is the iterative shrinkage 
algorithm 
with full IP subproblems solved to global optimality. To facilitate 
computational efficiency, we combine the iterative shrinkage scheme with a 
``reduction" idea by progressively fixing a portion of Heaviside function 
values, which leads to the solution of a sequence of partial IP subproblems 
with less number of integer variables.   The development and convergence 
analysis of the algorithms are presented in the following two subsections.

\subsection{Iterative decomposed shrinkage algorithm: 
The full IP version}
\label{subsec:Iterative shrinkage full}

Below we present the iterative shrinkage algorithm, 
in which we solve at each iteration the decomposed 
approximation subproblem 
$\left( \, \mbox{\bf P}_{\rho}^{\, \varepsilon_{\nu}; 
\mathbf{k},\boldsymbol{\ell}}(x^{\nu}) \, \right)$
to global optimality, by employing its
full IP formulation.  Since this formulation is a
special case of the partial formulation to be
introduced later (see (\ref{eq:reduced IP formulation of subproblems})), 
we omit the full formulation here
and directly present the full IP-based IDSA for solving
(\ref{eq:A_HSCOP}).

\begin{algorithm}[h]
\caption{Iterative decomposed shrinkage 
algorithm (IDSA)--Full MIP.}
\label{alg:IDSA-IP}
\begin{algorithmic}[1]
\State \textbf{Initialization.} Let 
$\{\varepsilon_{\nu} \}_{\nu=0}^{\infty}$
be a decreasing sequences of positive scalars.  
Let $x^0$ be a feasible iterate to the problem 
$\left( \, \mbox{\bf P}_{\rm AHS}^{\, \varepsilon_0} \, 
\right)$. Set $\rho\in \mathbb R_+$, each 
$\delta_{ij} \in \mathbb R_+$ and $\nu = 0$.
\Statex
\For{$\nu=0, 1, 2, \ldots,$}
\State \textbf{Main Computation.} Solve the problem
$\left( \, \mbox{\bf P}_{\rho}^{\, \varepsilon_{\nu}; 
\mathbf{k},\boldsymbol{\ell}}(x^{\nu}) \, \right)$ 
using an IP solver for 
all pairs $( \mathbf{k},\boldsymbol{\ell} )$ in 
${\cal M}^{\boldsymbol{\delta}}(x^{\nu})$.  
Let $x^{\nu+1}$ be an optimal solution of $\left( \, 
\mbox{\bf P}_{\rho}^{\, \varepsilon_{\nu}; 
\mathbf{k}^{\nu},\boldsymbol{\ell}^{\, \nu}}(x^{\nu}) 
\, \right)$ where 
$( \mathbf{k}^{\nu},\boldsymbol{\ell}^{\, \nu} )$ is 
a pair in ${\cal M}^{\boldsymbol{\delta}}(x^{\nu})$ 
with the highest objective value, i.e.,  
\[ \begin{array}{l}
\displaystyle{
\operatornamewithlimits{\mbox{\bf maximum}}_{
x \in X_{\rm AHS}^{\, \varepsilon_{\nu}; 
\mathbf{k}^{\nu},\boldsymbol{\ell}^{\nu}}}
} \, \theta_{\rho}^{\, \varepsilon_{\nu};
\mathbf{k}^{\nu}_0,\boldsymbol{\ell}^{\, \nu}_0}(x;x^{\nu})   = {\bf maximum}\left\{ \, \displaystyle{
\operatornamewithlimits{\mbox{\bf maximum}}_{
x \in X_{\rm AHS}^{\, \varepsilon_{\nu}; 
\mathbf{k},\boldsymbol{\ell}}}
} \, \theta_{\rho}^{\, \varepsilon_{\nu}; 
\mathbf{k}_0,\boldsymbol{\ell}_0}(x;x^{\nu}) \, \mid \, 
( \mathbf{k},\boldsymbol{\ell} ) \in 
{\cal M}^{\boldsymbol{\delta}}(x^{\nu}) \, \right\}  
\end{array} \] 
\State \textbf{Termination check.}  If a prescribed criterion
is satisfied (e.g.\ $\| x^{\nu+1} - x^{\nu} \| \leq \mbox{ given tolerance}$),
terminate; else continue.
\EndFor
\end{algorithmic}
\end{algorithm}

We make several remarks about the sequence 
$\{ x^{\nu} \}$.  
First and foremost is the existence of the initial $x^0$ satisfying
$x^0 \in X_{\rm AHS}^{\varepsilon_0}$.  If a vector 
$x^0 \in X_{\rm AHS}$ is available, then an
$\varepsilon_0 > 0$ can always be easily picked so that 
$x^0 \in X_{\rm AHS}^{\varepsilon_0}$.  In general, without 
knowing the 
existence of such a $x^0$, we may consider the modification 
of problem $\left( \, 
\mbox{\bf P}_{\rm AHS}^{\, \varepsilon_0} \, \right)$
by incorporating the minimization of a constraint residual 
and apply the
algorithm to the resulting problem: for a given scalar 
$\lambda > 0$,
\begin{equation} \label{eq:subproblem with residual} 
\begin{array}{l}
\displaystyle{
\operatornamewithlimits{\mbox{\bf maximize}}_{
x \, \in \, P; \, \gamma \, \geq \, 0}
} \,   c(x) + \displaystyle{
\sum_{j=1}^{J_0}
} \, \psi_{0j}^+ \, \onebld_{[ \, 0,\infty )}\left( 
\phi_{0j}(x) \right) - \displaystyle{
\sum_{j=1}^{J_0}
} \, \psi_{0j}^- \,
\onebld_{( \, -\varepsilon_0,\infty )}\left( 
\phi_{0j}(x) \right) \, - \\ [0.1in]
\hspace{1.2in} \displaystyle{
\frac{\rho}{2}
} \, \| x - \bar{x} \|_2^2 - \lambda \, \gamma \\ [0.1in]
\mbox{\bf subject to } \mbox{for all $i = 1, \cdots, I$:} \\ [3pt]
A_{\, i \bullet} \, x + \displaystyle{
\sum_{j=1}^{J_i}
} \, \psi_{ij}^+ \, \onebld_{[ \, 0,\infty )}\left( 
\phi_{ij}(x) \right) + \gamma \, \geq \, \displaystyle{
\sum_{j=1}^{J_i}
} \, \psi_{ij}^- \, \onebld_{( \, -\varepsilon_0,\infty )} \left( 
\phi_{ij}(x) \right) + \eta_i, 
\end{array} 
\end{equation}
which is always feasible. In the subsequent numerical implementation,
We set $\lambda$ to be a very large constant 
and apply the IDSA algorithm with an easily found feasible solution to the 
above residual-added problem \eqref{eq:subproblem with residual}. 
During the iterative process, when a feasible solution  to $\left( \, 
\mbox{\bf P}_{\rm AHS}^{\, \varepsilon_0} \, \right)$  is obtained
with $\gamma = 0$, then al subsequent subproblems will have a feasilble
solution on hand without the $\gamma$-variable.
As presented, we have not specified properties
of the stand-alone function $c(x)$ except for the implicit understanding
that it is such that each subproblem
$\left( \, \mbox{\bf P}_{\rho}^{\, \varepsilon_{\nu}; 
\mathbf{k},\boldsymbol{\ell}}(x^{\nu}) \, \right)$ can be solved 
to global optimality by an IP solver.  

In the following convergence
analysis, we assume that the initial pair $(x^0,\varepsilon_0)$ 
as stated in the algorithm exists. By the set inclusion 
property \eqref{eq:minorization}, $x^{\nu+1}$ is feasible to
$\left( \, \mbox{\bf P}_{\rm AHS}^{\, \varepsilon_{\nu+1}} 
\, \right)$; thus the 
iteration can be continued with $x^{\nu+1}$ replacing $x^{\nu}$.  
Thus the entire sequence $\{ x^{\nu+1} \}$ is well defined provided 
that $P$ is compact.  The following result establishes the convergence of 
Algorithm~\ref{alg:IDSA-IP}.

\begin{theorem} \label{th:convergence Algorithm IDSA-IP} \rm
Let $P$ be a compact set.  Let $c$ be continuous and 
each $\phi_{ij}$ be piecewise affine.
Suppose that a pair $(x^0,\varepsilon_0)$ exists satisfying
$\varepsilon_0 > 0$ and $x^0 \in X_{\rm AHS}^{\, \varepsilon_0}$. 
For an arbitrary pair $(\rho,\boldsymbol{\delta}) > 0$, 
Algorithm~\ref{alg:IDSA-IP} 
generates a well-defined, bounded sequence 
$\{ x^{\nu} \} \subseteq X_{\rm AHS}$.  For
any accumulation point $x^{\infty}$ of such a sequence, 
provided that the
functions $\phi_{ij}$ are nonnegative near $x^{\infty}$ for all 
$j \in {\cal J}_{i,0}^-(x^{\infty})$ and all 
$i = 0,1, \cdots, I$,
it holds that $x^{\infty}$ is a local maximizer of
(\ref{eq:A_HSCOP}).  
\end{theorem}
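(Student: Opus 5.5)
We argue in three stages: well-definedness and monotonicity, passage to the limit along a subsequence, and finally invoking Proposition~\ref{pr:N&S III locmax AHSCOP}(B).

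\textbf{Well-definedness and monotonicity.} By induction $x^\nu \in X_{\rm AHS}^{\varepsilon_\nu}$. Pick $(\mathbf{k},\boldsymbol{\ell})\in{\cal M}(x^\nu)\subseteq{\cal M}^{\boldsymbol{\delta}}(x^\nu)$, so that $\phi_{ij}^{k_{ij};+}(x^\nu)=\phi_{ij}^{\ell_{ij};-}(x^\nu)=\phi_{ij}(x^\nu)$. Then $x^\nu$ is feasible for that subproblem, and each subproblem is an u.s.c.\ maximization over a closed subset of the compact set $P$, so a maximizer exists. By (\ref{eq:minorization}), $x^{\nu+1}\in X_{\rm AHS}^{\varepsilon_\nu}\subseteq X_{\rm AHS}^{\varepsilon_{\nu+1}}$; the second inclusion holds because $\varepsilon_{\nu+1}<\varepsilon_\nu$ and the feasible sets grow as $\varepsilon$ shrinks. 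Optimality compared with $x^\nu$ gives
\[
\theta^{\varepsilon_{\nu+1}}(x^{\nu+1}) \,\geq\, \theta^{\varepsilon_\nu}(x^{\nu+1}) \,\geq\, \theta^{\varepsilon_\nu}(x^\nu)+\tfrac{\rho}{2}\|x^{\nu+1}-x^\nu\|^2 .
\]
The values $\theta^{\varepsilon_\nu}(x^\nu)$ are therefore nondecreasing and bounded above, since $c$ is bounded on $P$ and there are finitely many Heaviside terms. Hence they converge, and $\|x^{\nu+1}-x^\nu\|\to0$.

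\textbf{Limit of the subsequence.} Let $x^{\nu}\to x^\infty$ along a subsequence $\kappa$. Then $x^{\nu+1}\to x^\infty$ along $\kappa$ as well. By Lemma~\ref{lm:simple fact}, for large $\nu\in\kappa$ we have ${\cal M}(x^\infty)\subseteq{\cal M}^{\boldsymbol{\delta}}(x^\nu)$, so every tuple of ${\cal M}(x^\infty)$ is among those examined at iteration $\nu$. Moreover $x^\infty\in X_{\rm AHS}$. Under LSI, the pieces with $\psi_{ij}<0$ and $\phi_{ij}(x^\infty)=0$ satisfy $\phi_{ij}\ge 0$ nearby, and all other indices are locally constant in sign by (\ref{eq:sign invariant}). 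It follows that $x^\infty$ lies in $X_{\rm AHS}^{\varepsilon}$ for all small $\varepsilon$, and also in $X_{\rm AHS}^{\varepsilon;\mathbf{k},\boldsymbol{\ell}}$ for every $(\mathbf{k},\boldsymbol{\ell})\in{\cal M}(x^\infty)$. Since the Heaviside values stabilize near $x^\infty$ on the relevant indices, $\theta^{\varepsilon_\nu}(x^\nu)\to\theta(x^\infty)$; the pieces with $\phi_{ij}(x^\infty)\neq0$ are locally constant, and LSI handles the zero-level negative pieces. Some care is needed for positive pieces with $\phi_{ij}(x^\infty)=0$. For these we use upper semicontinuity of $\onebld_{[0,\infty)}$ together with monotonicity of the values.

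\textbf{Key step: the limit is a local maximizer.} Suppose, for contradiction, that $x^\infty$ is not a local maximizer of (\ref{eq:A_HSCOP}). By Proposition~\ref{pr:N&S III locmax AHSCOP}(B) and (C), together with the LSI hypothesis, there is a tuple $(\mathbf{k},\boldsymbol{\ell})\in{\cal M}(x^\infty)$ and points $y$ arbitrarily close to $x^\infty$ with $y\in X_{\rm AHS}^{\varepsilon;\mathbf{k},\boldsymbol{\ell}}$ and
\[
\theta^{\varepsilon;\mathbf{k}_0,\boldsymbol{\ell}_0}(y)-\tfrac{\rho}{2}\|y-x^\infty\|^2 \,>\, \theta(x^\infty).
\]
Because the quantities are discrete jumps plus the continuous $c$, the gap is a fixed positive amount $\eta$. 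For $\nu\in\kappa$ large, $y$ remains feasible for the iteration-$\nu$ subproblem with this tuple, using $\varepsilon_\nu<\varepsilon$ and the inclusions (\ref{eq:minorization}). Its objective value at $y$ is at least
\[
\theta^{\varepsilon;\mathbf{k}_0,\boldsymbol{\ell}_0}(y)-\tfrac{\rho}{2}\|y-x^\nu\|^2,
\]
which tends to a value exceeding $\theta(x^\infty)+\eta$. Yet the subproblem optimum equals $\theta^{\varepsilon_\nu}(x^{\nu+1})-\tfrac{\rho}{2}\|x^{\nu+1}-x^\nu\|^2\to\theta(x^\infty)$, a contradiction. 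The main obstacle is exactly this limit of the optimal values, namely proving $\theta^{\varepsilon_\nu}(x^{\nu+1})\to\theta(x^\infty)$ without semicontinuity of $\theta$. I would handle it by splitting the terms into positive and negative coefficients: the negative ones are controlled by LSI and Lemma~\ref{lm:epsilon_approximation}(iii), and the positive ones by upper semicontinuity. This yields $\limsup \le \theta(x^\infty)$, which is all the contradiction requires.
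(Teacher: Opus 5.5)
Your proposal is correct and is essentially the paper's proof in contrapositive form. It uses the same ingredients: the descent inequality giving $\|x^{\nu+1}-x^{\nu}\|\to 0$; Lemma~\ref{lm:simple fact} to place the tuple active at the competitor point inside ${\cal M}^{\boldsymbol{\delta}}(x^{\nu})$, so that global optimality of the iteration-$\nu$ subproblem can be tested against that point; the LSI/upper-semicontinuity bound $\limsup_{\nu \in \kappa} \theta^{\varepsilon_{\nu}}(x^{\nu+1}) \le \theta(x^{\infty})$; and Proposition~\ref{pr:N&S III locmax AHSCOP}(C) to remove the proximal term.

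Three small repairs are needed:
\begin{itemize}
\item $x^{\infty} \in X_{\rm AHS}$ is not automatic, since the set is not closed. Derive it as the paper does, by applying your own splitting to the constraints along $x^{\nu+1}$: negative terms are frozen by LSI and (\ref{eq:sign invariant}), and positive terms are upper semicontinuous.
\item The subproblem optimum is only bounded above by $\theta^{\varepsilon_{\nu}}(x^{\nu+1}) - \frac{\rho}{2}\|x^{\nu+1}-x^{\nu}\|_2^2$, not equal to it; that is the direction you need.
\item Your Stage~2 claim $\theta^{\varepsilon_{\nu}}(x^{\nu}) \to \theta(x^{\infty})$ does not follow from monotonicity, but, as you note, only the $\limsup$ bound is used.
\end{itemize}
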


\begin{proof}
By the set inclusion property \eqref{eq:minorization}, we have 
$\{ x^{\nu} \} \subseteq X_{\rm AHS}$; thus this 
sequence is bounded and has at least one accumulation point.
Let $x^{\infty}$ be the limit of a convergent subsequence
$\{ x^{\nu} \}_{\nu \in \kappa}$ of the sequence $\{ x^{\nu} \}$
produced by the algorithm.  We first claim that 
$\{x^{\nu+1}\}_{\nu \in \kappa}$ also converges to $x^\infty$. 
We have, for all $\nu = 0, 1, \cdots$,
\begin{equation*} \label{eq:max over many} 
\begin{array}{l}
\theta^{\, \varepsilon_{\nu+1}}(x^{\nu+1}) \, \geq \,
\theta^{\, \varepsilon_{\nu}}(x^{\nu+1})  \geq \, 
\theta^{\, \varepsilon_{\, \nu}; 
\mathbf{k}^{\nu}_0,\boldsymbol{\ell}^{\, \nu}_0}(x^{\nu+1}) 
\, = \, \theta_{\rho}^{\, \varepsilon_{\nu}; 
\mathbf{k}^{\nu}_0,\boldsymbol{\ell}^{\, \nu}_0}(x^{\nu+1};x^{\nu})
+ \displaystyle{
\frac{\rho}{2}
} \, \| x^{\nu+1} - x^{\nu} \|_2^2.  
\end{array}
\end{equation*}
Hence,  by optimality of $x^{\nu+1}$ we obtain
\begin{equation}
\label{eq:descent sequence}
\theta^{\, \varepsilon_{\nu+1}}(x^{\nu+1}) \, \geq \, \theta^{\, \varepsilon_{\nu};
\mathbf{k}_0,\boldsymbol{\ell}_0}(x^{\nu}) +
\displaystyle{
\frac{\rho}{2}
} \, \| x^{\nu+1} - x^{\nu} \|_2^2 =   
\theta^{\, \varepsilon_{\nu}}(x^{\nu}) +
\displaystyle{
\frac{\rho}{2}
} \, \| x^{\nu+1} - x^{\nu} \|_2^2,
\end{equation}
where $( \mathbf{k},\boldsymbol{\ell} ) \in {\cal M}(x^{\nu})$ in the 
second inequality.  Hence the sequence 
$\{ \theta^{\, \varepsilon_{\nu}}(x^{\nu}) \}$ is nondecreasing.  Since 
$\theta^{\, \varepsilon}(x )  \, \leq \theta(x ) \, \leq \, c(x) + 
\displaystyle{
\sum_{j=1}^{J_0}
} \, | \, \psi_{0j} \, |$ for all $x \, \in \, P$ and $\varepsilon >0$,  
it follows that  the sequence 
$\{ \theta^{\, \varepsilon_{\nu}}(x^{\nu}) \}$ 
converges; thus the sequence 
$\{ \, \| x^{\nu+1} - x^{\nu} \| \, \} \to 0$.
Hence, $\displaystyle{
\lim_{\nu ( \in \kappa ) \to \infty}
} \, x^{\nu+1} = x^{\infty}$.

We next show that $x^{\infty}$ is feasible to (\ref{eq:A_HSCOP}) if
$x^{\infty}$ is such that
$\phi_{ij}$ is nonnegative in a neighborhood of ${x}^{\infty}$ 
for all $j \in {\cal J}_{i,0}^-(x^{\infty})$ and all 
$i = 0,1, \cdots, I$; i.e., if $x^{\infty}$ satisfies the LSI condition. 
We have, for every $\nu \in \kappa$ and all $i = 1, \cdots, I$,
\[ \begin{array}{l}
A_{\, i \bullet} \, x^{\nu+1} + \displaystyle{
\sum_{j=1}^{J_i}
} \, \psi_{ij}^+ \, \onebld_{[ \, 0,\infty )}(\phi_{ij}( x^{\nu+1})) 
\, \geq \, A_{\, i \bullet} \, x^{\nu+1} + \displaystyle{
\sum_{j=1}^{J_i}
} \, \psi_{ij}^+ \, \onebld_{[ \, 0,\infty )}\left( 
\phi_{ij}^{\, k_{ij}^{\nu};+}( x^{\nu+1} ) \right) \\ [0.2in]
\geq \, \displaystyle{
\sum_{j=1}^{J_i}
} \, \psi_{ij}^- \, 
\onebld_{( \, -\varepsilon_{\nu},\infty )}\left( 
\phi_{ij}^{\, \ell_{ij}^{\, \nu};-}( x^{\nu+1} ) \right) + \eta_i  
\geq \, \displaystyle{
\sum_{j=1}^{J_i}
} \, \psi_{ij}^- \, \onebld_{( \, -\varepsilon_{\nu},\infty )}( 
\phi_{ij}(x^{\nu+1})) + \eta_i \\ [0.2in]
= \, \displaystyle{
\sum_{j \in {\cal J}_{i,0}^-(x^{\infty})}
} \, \psi_{ij}^- \, \onebld_{( \, -\varepsilon_{\nu},\infty )}( 
\phi_{ij}(x^{\nu+1})) + \displaystyle{
\sum_{j \not\in {\cal J}_{i,0}^-(x^{\infty})}
} \, \psi_{ij}^- \, \onebld_{( \, -\varepsilon_{\nu},\infty )}( 
\phi_{ij}(x^{\nu+1})) + \eta_i \\ [0.25in] 
= \, \displaystyle{
\sum_{j=1}^{J_i}
} \, \psi_{ij}^- \, \onebld_{[ \, 0,\infty )}( 
\phi_{ij}(x^{\infty})) + \eta_i, \epc 
\forall \, \nu \, \in \, \kappa
\mbox{ sufficiently large, since $\varepsilon_{\nu} \downarrow 0$},
\end{array} \]
where the last equality holds by (\ref{eq:sign invariant}) and the 
LSI condition at $x^\infty$.  Hence taking limsup on both sides, 
we obtain, by the upper semicontinuity of
$\onebld_{[ \, 0,\infty )}( \bullet )$,
\[ \begin{array}{l}
A_{\, i \bullet} \, x^{\infty} + \displaystyle{
\sum_{j=1}^{J_i}
} \, \psi_{ij}^+ \, 
\onebld_{[ \, 0,\infty )}(\phi_{ij}( x^{\infty}))  \geq \, \displaystyle{
\sum_{j=1}^{J_i}
} \, \psi_{ij}^- \, \onebld_{[ \, 0,\infty )}( 
\phi_{ij}(x^{\infty})) + \eta_i, 
\end{array}
\]
showing that $x^{\infty}$ is feasible to (\ref{eq:A_HSCOP}).
We next show that $x^{\infty}$ is a maximizer of 
$\theta$ on $X_{\rm AHS} \cap {\cal N}$,
where ${\cal N}$ is a neighborhood of $x^{\infty}$ satisfying

\gap

\noindent $\bullet $ ${\cal N}$ contains $x^{\nu}$ and $x^{\nu+1}$
for all $\nu \in \kappa$ sufficiently large;

\gap

\noindent $\bullet $ for all $x \in {\cal N}$, all $\varepsilon > 0$
sufficiently small, all $j \not\in {\cal J}_{i,0}^-(x^{\infty})$ 
and all $i = 0, 1, \cdots, I$, 
\begin{equation} \label{eq:same sign for IDSA IP}
\onebld_{( \, -\varepsilon,\infty )}(\phi_{ij}(x)) \, = \,
\onebld_{[ \, 0,\infty )}(\phi_{ij}(x)) \, = \, 
\onebld_{[ \, 0,\infty )}(\phi_{ij}(x^{\infty}));
\end{equation}
$\bullet $ $\phi_{ij}$ is nonnegative in ${\cal N}$ for all
all $j \in {\cal J}_{i,0}^-(x^{\infty})$ and all 
$i = 0, 1, \cdots, I$; and

\noindent $\bullet $ for any $x$ and $x^{\prime}$ both in 
${\cal N}$,
${\cal M}(x) \subseteq {\cal M}^{\boldsymbol{\delta}}(x^{\prime})$ 
(by Lemma \ref{lm:simple fact}); this is
where the positivity of $\delta$ is needed. 

\noindent Let $x \in X_{\rm AHS} \cap {\cal N}$ be arbitrary.  
Let the pair 
$( \mathbf{k},\boldsymbol{\ell} ) \in {\cal M}(x)$, we must have 
$( \mathbf{k},\boldsymbol{\ell} ) \in 
{\cal M}^{\boldsymbol{\delta}}(x^{\nu})$. 
With the same argument in Proposition \ref{pr:N&S III locmax AHSCOP} part (B),
we must have  $x \in X^{\, \varepsilon_{\nu};
\mathbf{k},\boldsymbol{\ell}}_{\rm AHS} \cap \mathcal N$.  Hence, 
for all $\nu \in \kappa$ sufficiently large, we have
\[ \begin{array}{l}
c(x^{\nu+1}) + \displaystyle{
\sum_{j=1}^{J_0}
} \, \psi_{0j}^+ \, 
\onebld_{[ \, 0,\infty )}( \phi_{0j}(x^{\nu+1}) )
- \displaystyle{
\sum_{j=1}^{J_0}
} \, \psi_{0j}^- \,
\onebld_{[ \, 0,\infty )}( \phi_{0j}(x^{\nu+1}) ) \\ [0.2in]
= \, c(x^{\nu+1}) + \displaystyle{
\sum_{j=1}^{J_0}
} \, \psi_{0j}^+ \, \onebld_{[ \, 0,\infty )}( 
\phi_{0j}(x^{\nu+1}) )
- \displaystyle{
\sum_{j=1}^{J_0}
} \, \psi_{0j}^- \,
\onebld_{( \, -\varepsilon_{\nu+1},\infty )}( 
\phi_{0j}(x^{\nu+1}) ) 
\epc \mbox{by LSI condition} \\ [0.25in]
= \, \theta^{\varepsilon_{\nu+1}}(x^{\nu+1}) \epc
\mbox{by definition of $\theta^{\varepsilon_{\nu+1}}$} \\ [0.1in] 
\geq \, \theta^{\varepsilon_{\nu}}(x) - 
\displaystyle{
\frac{\rho}{2}
} \, \| x - x^{\nu} \|_2^2
\epc \mbox{by a derivation similar to (\ref{eq:descent sequence})} 
\\ [0.2in]
= \, c(x) + \displaystyle{
\sum_{j=1}^{J_0}
} \, \psi_{0j} \, \onebld_{[ \, 0,\infty )}( \phi_{0j}(x) ) - 
\displaystyle{
\frac{\rho}{2}
} \, \| x - x^{\nu} \|_2^2 \epc \mbox{by LSI condition} 
\end{array}
\]
Taking limits $\nu (\in \kappa) \to \infty$ on both sides
yields 
\[
c(x^{\infty}) +  \displaystyle{
\sum_{j=1}^{J_0}
} \, \psi_{0j} \, \onebld_{[ \, 0,\infty )}( \phi_{0j}(x^{\infty}) )
\, \geq \, c(x) +  \displaystyle{
\sum_{j=1}^{J_0}
} \, \psi_{0j} \, \onebld_{[ \, 0,\infty )}( \phi_{0j}(x) ) -
\displaystyle{
\frac{\rho}{2}
} \, \| x - x^{\infty} \|_2^2,
\]
which establishes the local optimality of $x^{\infty}$ for the regularized HSCOP
problem, and thus for the original
(\ref{eq:A_HSCOP}) by 
Proposition~\ref{pr:N&S III locmax AHSCOP}.
\end{proof}

\subsection{Iterative decomposed shrinkage algorithm: 
the PIP version}

The ambitious goal of the iterative decomposed 
shrinkage algorithm comes with it the real
concern of its efficiency for 
large-scale problems.   As a practical alternative, 
instead of solving 
the full IP at each iteration, we adopt the 
``reduced" idea employed
in \cite{YueFang23} by embedding the PIP algorithm (with details
presented in Algorithm~\ref{alg:pip_details})
into IDSA as an inner loop solver for the decomposed 
approximating subproblems. The resulting PIP version 
of IDSA (denoted IDSA-PIP and described in 
Algorithm~\ref{alg:IDSA-PIP}) therefore solves a sequence 
of partial IP subproblems. The main idea in PIP 
is to iteratively fix part of Heaviside terms that are 
deemed likely to be determinate and let stand the 
remaining indeterminate ones, and progressively 
increase the latter ones until a 
fixed-point criterion is satisfied, resulting in 
significant savings in computational efforts.  Moreover, 
convergence of the IDSA-PIP algorithm can similarly be 
proved.
 
Specifically, at the beginning of an iteration of 
the iterative shrinkage algorithm, given the tuple
$( \varepsilon_\nu, \mathbf{k},\boldsymbol{\ell}, x^\nu )$
with the pair $( \mathbf{k},\boldsymbol{\ell} ) \in 
{\cal M}^{\delta}(x^\nu)$ that defines the 
decomposed approximating problem 
$\left( \, \mathbf P_\rho^{\varepsilon_{\nu};\mathbf{k},
\boldsymbol{\ell}}(x^\nu) \, \right)$ 
in \eqref{eq:approximated II AHSCOP regularized},  
we fix additionally a scalar $\delta^{\prime} > 0$ 
along with an incumbent solution 
$\wh x \in X_{\rm AHS}^{\varepsilon_\nu; \mathbf{k},\boldsymbol{\ell}}$, 
and construct the following disjoint index 
subsets $\left\{ \, \wh{\cal J}_{i,>}^{\, \pm,\delta^{\, 
\prime}}( \wh{x} ), \wh{\cal J}_{i,<}^{\, \pm,\delta^{\, 
\prime}}( \wh{x} ), \, \wh{\cal J}_{i,0}^{\, \pm,\delta^{\prime}}( \wh{x} ) 
\ \right\}$ 
of $\{ 1, \cdots, J_i \}$ for each $i = 0, 1, \cdots, I$: 
\begin{equation} \label{eq:index_subset_J}
\begin{array}{l}
\wh{\cal J}_{i,>}^{\, +,\delta^{\prime}}( \wh{x} ) \triangleq
\{ \, j  \in [J_i] \mid   \psi_{ij} > 0; \ 
\phi_{ij}^{k_{ij};+}( \wh{x} ) \, \geq \, \delta^{\prime} \, \}; 
\\ [0.1in] 
\wh{\cal J}_{i,<}^{\, +,\delta^{\prime}}( \wh{x} ) \triangleq
\{ \, j  \in [J_i] \mid  \psi_{ij} > 0; \ 
\phi_{ij}^{k_{ij};+}( \wh{x} ) \, \leq \, -\delta^{\prime} \, \}; 
\\ [0.1in]
\wh{\cal J}_{i,>}^{\, -,\delta^{\prime}}( \wh{x} ) \triangleq
\{ \, j  \in [J_i]  \mid  \psi_{ij} < 0; \ 
-\phi_{ij}^{\ell_{ij};-}( \wh{x} ) - \varepsilon_\nu \, \geq \, 
\delta^{\prime} \}; \epc \mbox{(omit $\varepsilon_{\nu}$ in notation
$\wh{\cal J}_{i,>}^{\, -,\delta^{\prime}}( \wh{x} )$)}
\\ [0.1in]  
\wh{\cal J}_{i,<}^{\, -,\delta^{\prime}}( \wh{x} ) \triangleq
\{ \, j  \in [J_i]  \mid   \psi_{ij} < 0; \ 
-\phi_{ij}^{\ell_{ij};-}( \wh{x} ) - \varepsilon_\nu \, \leq \, 
-\delta^{\prime} \}; \epc \mbox{(omit $\varepsilon_{\nu}$ in notation
$\wh{\cal J}_{i,<}^{\, -,\delta^{\prime}}( \wh{x} )$)} \\ [0.1in]
\wh{\cal J}_{i,0}^{+,\delta^{\prime}}( \wh{x} ) \, \triangleq \, 
\{ \, j  \in [J_i]  \mid  \psi_{ij} \, > \, 0 \, \} \, \setminus \,
\left( \, \wh{\cal J}_{i,>}^{\, +,\delta^{\prime}}( \wh{x} ) \, \cup \, 
\wh{\cal J}_{i,<}^{\, +,\delta^{\prime}}( \wh{x} ) 
\, \right);  \\[0.15in] 
\wh{\cal J}_{i,0}^{-,\delta^{\prime}}(\wh{x} ) \, \triangleq \, 
\{ \, j  \in [J_i]  \mid   \psi_{ij} \, < \, 0 \, \} \, \setminus \,
\left(   \wh{\cal J}_{i,>}^{\, -,\delta^{\prime}}( \wh{x} ) \, \cup \, 
\wh{\cal J}_{i,<}^{\, -,\delta^{\prime}}( \wh{x} )   \right).
\end{array}
\end{equation}
The superscripts $\pm$ refer to the signs of coefficients
$\{\psi_{ij}\}$,
subscripts $\{>, <\}$ refer to function values that are 
greater or less 
than $\pm \delta^{\prime}$, and the subscript $0$ 
refers to the function value that is in between 
$\pm \delta^{\prime}$. A remark about the scalar $\delta^{\prime}$ is in
order.  In the implementation of PIP, multiple such scalars are used,
one for each index set $\wh{\cal J}_{i,>}^{\pm,\delta^{\prime}}(\wh{x} )$
and  $\wh{\cal J}_{i,<}^{\pm,\delta^{\prime}}(\wh{x} )$; for simplicity
in the description of the partial A-HSCOP and its equivalent IP, we use 
one single scalar $\delta^{\prime}$.  As the computational 
workhorse, the partial A-HSCOP at the 
incumbent solution $\wh x$ is defined as follows:
\begin{equation} \label{eq:partial HSCOP}
\begin{array}{ll}
\displaystyle{
\operatornamewithlimits{\mbox{\bf maximum}}_{
x \, \in \, P}
} \ & c(x) +
\displaystyle{
\sum_{j \in \wh{\cal J}_{0,0}^{\, +,\delta^{\prime}}(\wh{x})}
}   \psi_{0j}^+   \onebld_{[0,\infty)}(\phi_{0j}^{k_{0j};+}(x) ) + 
\displaystyle{
\sum_{j \in \wh{\cal J}_{0,0}^{\, -,\delta^{\prime}}(\wh{x})}
}   \psi_{0j}^-  \onebld_{[0,\infty)}(-\phi_{0j}^{\ell_{0j};-}(x) 
- \varepsilon_\nu ) \\[0.1in]
& + \displaystyle{
\sum_{j \in \wh{\cal J}_{0,>}^{\, +,\delta^{\prime}}(\wh{x})}
}   \psi_{0j}^+    - \displaystyle{
\sum_{j   \in   \wh{\cal J}_{0,0}^{\, -,\delta^{\prime}}(\wh{x})  \, \cup \,   
\wh{\cal J}_{0,<}^{\, -,\delta^{\prime}}(\wh{x})}
}  \psi_{0j}^-  - \displaystyle{
\frac{\rho}{2}
}   \| x - x^{\nu} \|_2^2 \\ [0.3in]
\mbox{\bf subject to}
& A_{\, i \bullet} \, x + \displaystyle{
\sum_{j \in \wh{\cal J}_{i,0}^{\, +,\delta^{\prime}}(\wh{x})}
}  \psi_{ij}^+   \onebld_{[0,\infty)}(\phi_{ij}^{k_{ij};+}(x) ) 
+ \displaystyle{
\sum_{j \in \wh{\cal J}_{i,0}^{\, -,\delta^{\prime}}(\wh{x})}
}  \psi_{ij}^-    \onebld_{[0,\infty)}(-\phi_{ij}^{\ell_{ij};-}(x) - 
\varepsilon_\nu )\\ [0.3in] 
& \epc \geq \, \displaystyle{
\sum_{j   \in   \wh{\cal J}_{i,0}^{\, -,\delta^{\prime}}(\wh{x})  \, \cup \,   
\wh{\cal J}_{i,<}^{\, -,\delta^{\prime}}(\wh{x})}
}  \psi_{ij}^- - \displaystyle{
\sum_{j \in \wh{\cal J}_{i,>}^{\, +,\delta^{\prime}}(\wh{x})}
}  \psi_{ij}^+ + \eta_i, \epc \forall i \in [ \, I \, ] \\[0.4in]
\mbox{\bf and} & \left\{ \begin{array}{ll}
\phi_{ij}^{k_{ij};+}(x) \, \geq \, 0,  &  \forall j \, \in \, 
\wh{\cal J}_{i,>}^{\, +,\delta^{\prime}}(\wh{x}) \\ [0.1in]
-\phi_{ij}^{\ell_{ij};-}(x) \, \geq \, \varepsilon_\nu, 
&  \forall j \, \in \, \wh{\cal J}_{i,>}^{\, -,\delta^{\prime}}(\wh{x}) 
\\ [0.1in]
\end{array} \right\}, \epc \forall i = 0,\cdots, I. 
\end{array}   
\end{equation} 
This partial A-HSCOP has the equivalent
IP formulation (presented in (\ref{eq:reduced IP formulation of subproblems}))
that involves a constant $M > 0$ such that
\begin{equation} \label{eq:phi bounded}
| \, \phi_{ij}(x) \, | \, \leq \, M, \epc 
\forall \, (i,j) 
\ \mbox{ and } \ \forall \, x \, \in \, P.
\end{equation}
The assumption is easily satisfied where $P$ is a 
compact set and each $\phi_{ij}$ is continuous.  
Further, while it may be
possible to develop parameter-free IP-based methods 
without this assumption,
as done in the case of a linear program with linear 
complementarity constraints via a logical Benders 
approach \cite{HMPBKunapuli08}, such an extension is 
beyond the scope of our present work.  

Below we present the IP formulation for the partial HSCOP problem 
\eqref{eq:partial HSCOP} at the incumbent solution $\wh x$. 
\begin{equation} \label{eq:reduced IP formulation of subproblems}
\begin{array}{ll}
\displaystyle{
\operatornamewithlimits{\mbox{\bf maximum}}_{x \, \in \, P; 
\ z^{\pm}}
} \ & c(x) +
\displaystyle{
\sum_{j \in \wh{\cal J}_{0,0}^{\, +,\delta^{\prime}}(\wh{x})}
} \, \psi_{0j}^+ \, z_{0j}^+ + \displaystyle{
\sum_{j \in \wh{\cal J}_{0,0}^{\, -,\delta^{\prime}}(\wh{x})}
} \, \psi_{0j}^- \, z_{0j}^- + \displaystyle{ \sum_{j\in \wh{\cal J}_{0,>}^{\, +, \delta^{\prime}}(\wh{x})} \psi_{0j}^+} \\ [0.25in]
& - \displaystyle{\sum_{j\in \wh{\cal J}_{0,0}^{\, -, \delta^{\prime}}(\wh{x}) \, \cup\, \wh{\cal J}_{0,<}^{\, -, \delta^{\prime}}(\wh{x})}\psi_{0j}^-}
- \displaystyle{
\frac{\rho}{2}
} \, \| x - x^{\nu} \|_2^2 \\ [0.25in]
\mbox{\bf subject to }  
& A_{\, i \bullet} \, x + \displaystyle{
\sum_{j \in \wh{\cal J}_{i,0}^{\, +,\delta^{\prime}}(\wh{x})}
}  \psi_{ij}^+  z_{ij}^+ + \displaystyle{
\sum_{j \in \wh{\cal J}_{i,>}^{\, +, \delta^{\prime}}(\wh{x})}
}  \psi_{ij}^+ + \displaystyle{
\sum_{j \in \wh{\cal J}_{i,0}^{\, -,\delta^{\prime}}(\wh{x})}
}  \psi_{ij}^-   z_{ij}^- \\ [0.3in]
& \geq   \displaystyle{
\sum_{j   \in   \wh{\cal J}_{i,0}^{\, -,\delta^{\prime}}(\wh{x}) \, \cup \,   
\wh{\cal J}_{i,<}^{\, -,\delta^{\prime}}(\wh{x})}
}  \psi_{ij}^- + \eta_i, \epc \forall i\in [ \, I \, ]\\ [5pt]
\mbox{\bf and} & \mbox{\ for\ all\ } i=0,\cdots,I:\\ [5pt]
& \left\{ \begin{array}{lll}
\phi_{ij}^{k_{ij};+}(x) \, \geq \, -M ( 1 - z_{ij}^+ ); 
& z_{ij}^+ \, \in \, \{ \, 0,1 \, \}; & j \, \in \, 
\wh{\cal J}_{i,0}^{\, +,\delta^{\prime}}(\wh{x}) \\ [5pt]
-\phi_{ij}^{\ell_{ij};-}(x) \, \geq \, -M ( 1 - z_{ij}^- ) 
+ \varepsilon_{\nu};
& z_{ij}^+ \, \in \, \{ \, 0,1 \, \}; & j \, \in \, 
\wh{\cal J}_{i,0}^{\, -,\delta^{\prime}}(\wh{x}) \\ [5pt]
\end{array} \right\} \\ [0.3in]
& \left\{ \begin{array}{lll}
\phi_{ij}^{k_{ij};+}(x) \, \geq \, 0; 
& (\mbox{ equivalent to } z_{ij}^+ = 1); & j \, \in \, 
\wh{\cal J}_{i,>}^{\, +,\delta^{\prime}}(\wh{x}) \\ [5pt]
-\phi_{ij}^{\ell_{ij};-}(x) \, \geq \, \varepsilon_{\nu}; 
& (\mbox{ equivalent to } z_{ij}^- = 1); & j \, \in \, 
\wh{\cal J}_{i,>}^{\, -,\delta^{\prime}}(\wh{x}) \\ [5pt]
\end{array} \right\} \\ [0.3in] 
& \left\{ \begin{array}{lll}
\phi_{ij}^{k_{ij};+}(x) \, \mbox{ free}; 
& (\mbox{ equivalent to } z_{ij}^+ = 0); & j \, \in \, 
\wh{\cal J}_{i,<}^{\, +,\delta^{\prime}}(\wh{x}) \\ [5pt]
-\phi_{ij}^{\ell_{ij};-}(x) \, \mbox{ free}; 
& (\mbox{ equivalent to } z_{ij}^- = 0); & j \, \in \, 
\wh{\cal J}_{i,<}^{\, -,\delta^{\prime}}(\wh{x})
\end{array} \right\} .
\end{array}   
\end{equation}
The solution of the above (partial) IPs is the foremost computational 
step in the PIP version of Algorithm~\ref{alg:IDSA-IP}.
Recalling that $\phi_{ij}^{k_{ij};+}$ is a concave PA function and 
$\phi_{ij}^{\ell_{ij};-}$ is a convex PA function 
(see (\ref{eq:conv/conv phi})), we note
when $c$ is a concave function, the above IP is a mixed integer 
concave maximization program and its global optimum can be obtained in 
principle.  When $c$ is a PA function that is not necessarily concave, we 
could reformulate the full and partial IPs as a MILP by imposing additional 
binary variables to represent the pieces for $c$ 
(see \cite{BertsimasGeorghiou15,VielmaAhmedNemhauser10}).  In the rest of the paper, we do not discuss this refinement of 
(\ref{eq:reduced IP formulation of subproblems}).  

It is worthwhile to remind
the reader that behind this subproblem, there are the vector 
$x^{\nu} \in X_{\rm AHS}$ 
and the associated tuple $( \varepsilon_\nu, \mathbf{k},\boldsymbol{\ell} )$
with the pair of index tuples $( \mathbf{k},\boldsymbol{\ell} ) \in 
{\cal M}^{\delta}(x^\nu)$ from which
the index pair $(k_{ij},\ell_{ij})$ are selected to
define the pieces of the PA functions $\phi_{ij}^{k_{ij};+}(x^{\nu})$ and
$\phi_{ij}^{\ell_{ij};-}(x^{\nu})$ (see (\ref{eq:conv/conv phi})).  The 
vector $x^{\nu}$
is the iterate produced by the outer iteration in the iterative 
shrinkage procedure and fixed in the regularized objective throughout 
the PIP procedure, whose details are presented in
Algorithm~\ref{alg:pip_details} at the end of this section; 
this vector differs from 
$\wh{x} \in X_{\rm AHS}^{\varepsilon_\nu; \mathbf{k},\boldsymbol{\ell}}$
which is the PIP iterate that defines the fixing of
the binary variables. In particular, $\wh x$, $\delta^{\prime}$, and the index 
subsets \eqref{eq:index_subset_J} that define 
(\ref{eq:reduced IP formulation of subproblems}) are all 
updated during the iterations of PIP.

When the index sets $\wh{\cal J}_{i,>}^{\, \pm,\delta^{\prime}}$ and
$\wh{\cal J}_{i,<}^{\, \pm,\delta^{\prime}}$ are empty, 
(\ref{eq:reduced IP formulation of subproblems}) becomes the full IP
formulation of (\ref{eq:approximated II AHSCOP regularized}).
Computationally, with the appropriate control of these index sets not all 
empty, the problem (\ref{eq:reduced IP formulation of subproblems})
has less binary variables, thus can be solved more efficiently than the version
with full set of integer variables.  These savings, together with 
the ability to restart each iteration of PIP with a favorable iterate, are 
two main advantages of the algorithm below, which is formally the PIP version
of Algorithm~\ref{alg:IDSA-IP}.  

\begin{algorithm}[h]
\caption{Iterative decomposed 
 shrinkage algorithm(IDSA)-PIP.}
\label{alg:IDSA-PIP}
\begin{algorithmic}[1]
\State \textbf{Initialization.} Let 
$\{\varepsilon_{\nu} \}_{\nu=0}^{\infty}$
be a decreasing sequences of positive scalars.  Let $x^0$ be a feasible 
iterate to the problem 
$\left( \, \mbox{\bf P}_{\rm AHS}^{\, \varepsilon_0} \, 
\right)$. Set $\rho\in \mathbb R_+$, each $\delta_{ij}\in \mathbb R_+$ 
and $\nu = 0$.
\Statex
\For{$\nu=0, 1, 2, \ldots,$}
\State \textbf{Inner Loop Computation.} For each pair
$( \mathbf{k},\boldsymbol{\ell} )$ in 
${\cal M}^{\boldsymbol{\delta}}(x^{\nu})$, solve the problem
$\left( \, \mbox{\bf P}_{\rho}^{\, \varepsilon_{\nu}; 
\mathbf{k},\boldsymbol{\ell}}(x^{\nu}) \, \right)$ using 
the PIP Algorithm~\ref{alg:pip_details}, which returns the corresponding
fixed point of the partial A-HSCOP problem \eqref{eq:partial HSCOP}.  
\Statex
\State \textbf{Update.} Set $x^{\nu+1}$ to be the fixed point computed above 
along with the maximal objective value $\theta_{\rho}^{\, \varepsilon_{\nu}; 
\mathbf{k}_{\nu},\boldsymbol{\ell}_{\nu}}(x^{\nu+1};x^{\nu})$ corresponding
to the index pair $( \mathbf{k}^{\nu},\boldsymbol{\ell}^{\, \nu} )  \in 
{\cal M}^{\boldsymbol{\delta}}(x^{\nu})$.   At the termination of 
PIP for this pair, let $\mu_{\nu}$ be the PIP iteration count and  
$\left\{ \, \wh{\cal J}_{i,>}^{\, \pm \mu_{\nu}}( x^{\nu+1} ), 
\wh{\cal J}_{i,<}^{\, \pm \mu_{\nu}}( x^{\nu+1} ), \, 
\wh{\cal J}_{i,0}^{\, \pm \mu_{\nu}}( x^{\nu+1} ) \, \right\}$ be the index
tuples defined in Step~18 associated with the 
scalars $\delta^{\pm \mu_{\nu}}_{i,>}$ and $\delta^{\pm \mu_{\nu}}_{i,<}$ 
defined in Step~14 of Algorithm~\ref{alg:pip_details}.  Let 
$\delta_{\nu}^{\prime} \triangleq \displaystyle{
\min_{0 \leq i \leq I}
} \, \left\{ \delta^{\pm \mu_{\nu}}_{i,>}, \delta^{\pm \mu_{\nu}}_{i,<} \right\}$.
\State \textbf{Termination check.}  If a prescribed criterion
is satisfied (e.g.\ $\| x^{\nu+1} - x^{\nu} \| \leq \mbox{ given tolerance}$),
terminate; else continue.
\EndFor
\end{algorithmic}
\end{algorithm}

\noindent Algorithm~\ref{alg:IDSA-PIP} is essentially the same as 
Algorithm~\ref{alg:IDSA-IP}  except that instead of solving each subproblem
$\left( \, \mbox{\bf P}_{
\rho}^{\, \varepsilon_{\nu}; 
\mathbf{k},\boldsymbol{\ell}}(x^{\nu}) \, \right)$ to global
optimality for all pairs $( \mathbf{k},\boldsymbol{\ell} ) \in 
{\cal M}^{\boldsymbol{\delta}}(x^{\nu})$, we solve these subproblems by 
the PIP Algorithm~\ref{alg:pip_details}.  According to \cite{YueFang23}, 
during each iteration of the PIP procedure (Inner Loop Computation) embedded 
within Algorithm~\ref{alg:IDSA-PIP}, the objective 
value of problem $\left( \, \mbox{\bf P}_{\rho}^{\, \varepsilon_{\nu}; 
\mathbf{k},\boldsymbol{\ell}}(x^{\nu}) \, \right)$ must increase. 
Moreover, the PIP method  will terminate in a finite number of 
iterations with the output of a fixed point to the partial A-HSCOP 
which is a local maximizer to 
$\left( \, \mbox{\bf P}_{\rho}^{\, \varepsilon_{\nu}; 
\mathbf{k},\boldsymbol{\ell}}(x^{\nu}) \, \right)$.  When solving the problem
$\left( \, \mbox{\bf P}_{\rho}^{\, \varepsilon_{\nu}; 
\mathbf{k},\boldsymbol{\ell}}(x^{\nu}) 
\, \right)$ by PIP, we always start the PIP iterations at $x^{\nu}$.  

The following result is the convergence of the above algorithm
which is the PIP version of 
Theorem~\ref{th:convergence Algorithm IDSA-IP}.  While expected to
be more efficient, there is one major difference between the two 
results. Namely, here, instead of a nonnegativity 
assumption on the functions $\phi_{ij}$ for 
$j \in {\cal J}_{i,0}^-(x^{\infty})$ near an accumulation 
point $x^{\infty}$, i.e., the local sign invariance assumption, the 
more restrictive assumption that this
index set ${\cal J}_{i,0}^-(x^{\infty}) = \emptyset$ 
is needed.

\begin{theorem} \label{th:convergence Algorithm IDSA-PIP} \rm
Let $P$ be a compact set.  Let $c$ be continuous and 
each $\phi_{ij}$ be piecewise affine given by (\ref{eq:PA phi}).
Suppose that a pair $(x^0,\varepsilon_0)$ exists satisfying
$\varepsilon_0 > 0$ and $x^0 \in X_{\rm AHS}^{\, \varepsilon_0}$. 
For an arbitrary tuple 
$(\rho, \boldsymbol{\delta}) > 0$, suppose that 
$\delta^{\prime}_{\nu} \geq \bar \delta > 0$ for all $\nu$.  If
$x^{\infty}$ is an accumulation point of a sequence $\{ x^{\nu} \}$
produced by Algorithm~\ref{alg:IDSA-PIP}, then 
$x^{\infty}$ must be feasible to (\ref{eq:A_HSCOP});
moreover, provided that 
${\cal J}_{i,0}^-(x^{\infty})$ is empty for all 
$i = 0, 1, \cdots, I$, there exists a pair 
$( \mathbf{k}^{\infty},\boldsymbol{\ell}^{\, \infty} ) 
\in {\cal M}(x^{\infty})$
such that $x^{\infty}$ is a local maximizer of the problem:
\begin{equation} \label{eq:limit problem}
 \begin{array}{ll}
\displaystyle{
\operatornamewithlimits{\mbox{\bf maximize}}_{x \in P}
} & c(x) +   \ \displaystyle{
\sum_{j=1}^{J_0}
} \, \psi_{0j}^+ \, \onebld_{[ \, 0,\infty )}\left( 
\phi_{0j}^{\, k_{0j}^{\infty};+}(x) \right) - \displaystyle{
\sum_{j=1}^{J_0}
} \, \psi_{0j}^- \,
\onebld_{[ \, 0,\infty )}\left( 
\phi_{0j}^{\, \ell_{0j}^{\infty};-}(x) \right) \\ [0.3in]
\mbox{\bf subject to}  &
A_{\, i \bullet} x + \displaystyle{
\sum_{j=1}^{J_i}
} \psi_{ij}^+ \, \onebld_{[ \, 0,\infty )}\left( 
\phi_{ij}^{\, k_{ij}^{\infty};+}(x) \right) - \displaystyle{
\sum_{j=1}^{J_i}
} \psi_{ij}^- \, \onebld_{[ \, 0,\infty )}\left( 
\phi_{ij}^{\, \ell_{ij}^{\infty};-}(x) \right) \geq \eta_i, \
\mbox{for all $i \in [ \, I \, ]$}
\end{array}  
\end{equation}
If additionally ${\cal M}(x^{\infty})$ is a singleton, 
then $x^{\infty}$ is a local maximizer of 
(\ref{eq:A_HSCOP}).
\end{theorem}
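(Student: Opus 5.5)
We follow the proof of Theorem~\ref{th:convergence Algorithm IDSA-IP}, with two changes. Global optimality of each subproblem is replaced by the fixed-point (local optimality) property of the PIP output. The positive lower bound $\delta^{\prime}_{\nu} \geq \bar\delta$ supplies the uniform neighborhood that the global argument no longer provides.

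\textbf{Step 1: monotonicity and vanishing steps.} PIP is started at $x^{\nu}$ and increases the objective of $\left( \mbox{\bf P}_{\rho}^{\varepsilon_{\nu};\mathbf{k},\boldsymbol{\ell}}(x^{\nu}) \right)$ at each iteration. Take $(\mathbf{k},\boldsymbol{\ell}) \in {\cal M}(x^{\nu})$; at $x^{\nu}$ its objective equals $\theta^{\varepsilon_{\nu}}(x^{\nu})$. The same chain as (\ref{eq:descent sequence}) then gives
\[
\theta^{\varepsilon_{\nu+1}}(x^{\nu+1}) \geq \theta^{\varepsilon_{\nu}}(x^{\nu}) + \tfrac{\rho}{2}\|x^{\nu+1}-x^{\nu}\|_2^2 .
\]
Boundedness of $\theta$ on the compact set $P$ yields $\|x^{\nu+1}-x^{\nu}\| \to 0$. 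Hence along a subsequence $\kappa$ both $x^{\nu}$ and $x^{\nu+1}$ converge to $x^{\infty}$. Since ${\cal M}^{\boldsymbol\delta}(x^{\nu})$ ranges over a finite set, we pass to a further subsequence on which $(\mathbf{k}^{\nu},\boldsymbol{\ell}^{\nu}) \equiv (\mathbf{k}^{\infty},\boldsymbol{\ell}^{\infty})$ is constant. We also fix the index partitions of (\ref{eq:index_subset_J}) returned at termination of PIP.

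\textbf{Step 2: feasibility and membership in ${\cal M}(x^{\infty})$.} Along $\kappa$ the constant pair satisfies $x^{\nu+1} \in X_{\rm AHS}^{\varepsilon_{\nu};\mathbf{k}^{\infty},\boldsymbol{\ell}^{\infty}}$. By continuity of the affine pieces and closedness of $\{t \geq 0\}$, the limit satisfies the constraint in (\ref{eq:limit problem}) with closed Heaviside functions, using $\onebld_{(-\varepsilon_{\nu},\infty)} \geq \onebld_{[0,\infty)}$ and upper semicontinuity. Combining this with (\ref{eq:bounding fncs}) gives feasibility for (\ref{eq:A_HSCOP}). Membership $(\mathbf{k}^{\infty},\boldsymbol{\ell}^{\infty}) \in {\cal M}(x^{\infty})$ must be checked separately. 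We have $k^{\infty}_{ij} \in {\cal K}^{\delta_{ij}}_{ij}(x^{\nu})$ for all large $\nu$, which in the limit only gives ${\cal K}^{\delta_{ij}}_{ij}(x^{\infty})$. We therefore plan to use that the decomposed problem is tied to the active pieces, either by taking $\delta_{ij}$ below the $\bar\delta$ of Lemma~\ref{lm:simple fact}, or by re-selecting within ${\cal M}(x^{\infty})$ through Lemma~\ref{lm:simple fact} applied at $x^{\infty}$. This point needs care.

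\textbf{Step 3: local optimality for (\ref{eq:limit problem}).} This is the main obstacle. The PIP fixed point $x^{\nu+1}$ is only a global maximizer of the \emph{partial} problem (\ref{eq:partial HSCOP}), in which the indices in the $>$ and $<$ sets are frozen. The bound $\delta^{\prime}_{\nu} \geq \bar\delta$ controls these frozen indices. For a frozen index, $|\phi^{k;+}_{ij}(x^{\nu+1})| \geq \bar\delta$ (or the analogous bound for the $-$ terms). By uniform continuity of the finitely many PA pieces, there is a radius $r>0$, independent of $\nu$, such that for $\|x - x^{\nu+1}\| < r$ the frozen Heaviside values agree with those at $x^{\nu+1}$. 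So every $x$ feasible for the decomposed problem within this ball is feasible for the partial problem with equal objective. This makes $x^{\nu+1}$ a maximizer of $\theta_{\rho}^{\varepsilon_{\nu};\mathbf{k}^{\infty}_0,\boldsymbol{\ell}^{\infty}_0}(\cdot;x^{\nu})$ on $X_{\rm AHS}^{\varepsilon_{\nu};\mathbf{k}^{\infty},\boldsymbol{\ell}^{\infty}} \cap \mathbb{B}(x^{\nu+1},r)$, and hence on the fixed ball $\mathbb{B}(x^{\infty},r/2)$ for large $\nu$.

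Next take $x$ feasible for (\ref{eq:limit problem}) in that ball. Because ${\cal J}_{i,0}^-(x^{\infty})=\emptyset$, every negative-coefficient term has $\phi_{ij}(x^{\infty}) \neq 0$. We shrink the ball so that sign invariance (\ref{eq:sign invariant}) holds for these terms with $\phi^{\ell;-}_{ij}=\phi_{ij}$ near $x^{\infty}$. Then $x$ is feasible for the $\varepsilon_{\nu}$-problem for large $\nu$, and the objective terms coincide. We compare objectives and let $\nu\to\infty$, as in the final display of the proof of Theorem~\ref{th:convergence Algorithm IDSA-IP}. This gives optimality of $x^{\infty}$ for the $\rho$-regularized version of (\ref{eq:limit problem}), up to the term $-\tfrac{\rho}{2}\|x-x^{\infty}\|^2$. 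The star-shape argument of Proposition~\ref{pr:N&S III locmax AHSCOP}(C) then removes the regularization.

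\textbf{Step 4: the singleton case.} If ${\cal M}(x^{\infty})$ is a singleton, Lemma~\ref{lm:simple fact} gives ${\cal M}(x)=\{(\mathbf{k}^{\infty},\boldsymbol{\ell}^{\infty})\}$ for all $x$ near $x^{\infty}$. By (\ref{eq:bounding fncs}), $\phi^{k^{\infty};+}_{ij}=\phi^{\ell^{\infty};-}_{ij}=\phi_{ij}$ on a neighborhood of $x^{\infty}$. So (\ref{eq:limit problem}) and (\ref{eq:A_HSCOP}) coincide locally, and local optimality transfers.
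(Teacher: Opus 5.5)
Your Step~2 feasibility argument does not work. After you replace $\onebld_{(-\varepsilon_\nu,\infty)}$ by the smaller $\onebld_{[0,\infty)}$, the constraint function still contains the terms $-\psi^-_{ij}\,\onebld_{[0,\infty)}(\phi^{\ell_{ij};-}_{ij}(\cdot))$. These are lower, not upper, semicontinuous, so the inequality is not inherited by the limit; only the $\psi^+_{ij}$ terms cooperate. The implication you use is in fact false. Take $n=1$, $P=[-1,1]$, $c(x)=x$, no Heaviside terms in the objective, and the single constraint $-\onebld_{[0,\infty)}(x)\geq 0$ (i.e.\ $x<0$). Then $X^{\varepsilon;\mathbf{k},\boldsymbol{\ell}}_{\rm AHS}=[-1,-\varepsilon]$, and with $x^0=-1$, $\rho=1$, $\varepsilon_\nu\leq 1/2$, the exactly solved subproblems return $x^{\nu+1}=-\varepsilon_\nu\to 0\notin X_{\rm AHS}$. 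Feasibility of $x^\infty$ must therefore come from the sign hypothesis ${\cal J}^-_{i,0}(x^\infty)=\emptyset$ (or LSI), as in the proof of Theorem~\ref{th:convergence Algorithm IDSA-IP}. There you bound $\onebld_{(-\varepsilon_\nu,\infty)}(\phi^{\ell_{ij};-}_{ij}(x^{\nu+1}))\geq \onebld_{(-\varepsilon_\nu,\infty)}(\phi_{ij}(x^{\nu+1}))$ and use \eqref{eq:sign invariant} at $x^\infty$ to freeze each negative term at its value at $x^\infty$. Only u.s.c.\ terms then remain when you pass to the limit. The paper's ``similar to the proof of Theorem~\ref{th:convergence Algorithm IDSA-IP}'' tacitly uses this hypothesis too, so neither argument establishes the unconditional first assertion.

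The second gap is the one you flag, $(\mathbf{k}^\infty,\boldsymbol{\ell}^\infty)\in{\cal M}(x^\infty)$, and neither of your suggestions closes it. Shrinking $\delta_{ij}$ below the threshold of Lemma~\ref{lm:simple fact} is not available: that threshold depends on the unknown limit $x^\infty$, while $\boldsymbol{\delta}>0$ is arbitrary and fixed before the run. Re-selecting a pair in ${\cal M}(x^\infty)$ discards the only optimality information you have, namely the PIP fixed-point property of the \emph{selected} pair. The paper's justification, ${\cal M}(x^\nu)\subseteq{\cal M}(x^\infty)$, does not settle it either, because the algorithm picks $(\mathbf{k}^\nu,\boldsymbol{\ell}^\nu)$ from ${\cal M}^{\boldsymbol{\delta}}(x^\nu)$. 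An extra selection rule is needed (e.g.\ a pair active at $x^\nu$ or $x^{\nu+1}$; activity is a closed condition and passes to the limit), or else an extra assumption. The point is load-bearing. Step~3 needs $\phi^{\ell^\infty_{ij};-}_{ij}(x^\infty)\neq 0$ whenever $\psi_{ij}<0$, and this follows from ${\cal J}^-_{i,0}(x^\infty)=\emptyset$ only via $\phi^{\ell^\infty_{ij};-}_{ij}(x^\infty)=\phi_{ij}(x^\infty)$. Your ``$\phi^{\ell;-}_{ij}=\phi_{ij}$ near $x^\infty$'' is false in general: equality holds only at $x^\infty$, and only under membership. Step~4 likewise needs the limit problem to use the unique active pair.

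Granting membership, your Step~3 is sound and genuinely differs from the paper. The uniform radius you obtain from $\delta'_\nu\geq\bar\delta$ and the Lipschitz constants of the finitely many pieces replaces the paper's subsequence with constant frozen partition and its limiting inclusions \eqref{eq:positivity of phi}. You also make explicit the removal of the regularization, which the paper's ``simple limiting argument'' leaves implicit. That step, however, needs more than continuity of $c$. Proposition~\ref{pr:N&S III locmax AHSCOP}(C) holds for concave (e.g.\ linear) $c$, but with $c(x)=x^2$ on $[-1,1]$ and $\rho>2$, the point $\bar x=0$ maximizes $c(x)-\frac{\rho}{2}\|x-\bar x\|_2^2$ without being a local maximizer of $c$. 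Both your argument and the paper's rely on this restriction without stating it.
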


\begin{proof}
We follow
the proof of Theorem~\ref{th:convergence Algorithm IDSA-IP}.  
We begin by noting that the string of 
inequalities (\ref{eq:descent sequence})
continues to hold because by the progressive improvement of PIP, 
\[
\theta^{\, \varepsilon_{\nu}; 
\mathbf{k}^{\nu}_0,\boldsymbol{\ell}^{\, \nu}_0}(x^{\nu+1})
- \displaystyle{
\frac{\rho}{2}
} \, \| x^{\nu+1} - x^{\nu} \|_2^2  
\, \geq \, \theta^{\, \varepsilon_{\nu};
\mathbf{k}_0^\nu,\boldsymbol{\ell}_0^\nu}(x^{\nu}),
\]
with  $x^{\nu}$ and $x^{\nu+1}$ being the starting and output points 
respectively
in the application of the PIP algorithm to the subproblem
$\left( \, \mbox{\bf P}_{\rho}^{\, \varepsilon_{\nu}; 
\mathbf{k}^{\nu},\boldsymbol{\ell}^{\nu}}(x^{\nu}) \, \right)$ 
and the sequence of objective values 
$\left\{ \, \theta^{\, \varepsilon_{\nu}; 
\mathbf{k}^{\nu}_0,\boldsymbol{\ell}^{\, \nu}_0}(x^{\nu}) 
\, \right\}$ is strictly increasing during this algorithm.  As 
before, we can show that $\displaystyle{
\lim_{\nu \to \infty}
} \, \| \, x^{\nu+1} - x^{\nu} \|_2 = 0$.  Let $x^{\infty}$ be the
limit of a convergent subsequence $\{ x^{\nu} \}_{\nu \in \kappa}$.
Similar to the proof of Theorem~\ref{th:convergence Algorithm IDSA-IP},
it can be shown that $x^{\infty}$ is feasible to 
(\ref{eq:A_HSCOP}).  The proof that $x^{\infty}$ is a local
maximizer differs from that of the previous theorem
due to the non-global optimality of each iterate $x^{\nu+1}$.
Nevertheless, we may take the same neighborhood ${\cal N}$ of
$x^{\infty}$ as given therein; i.e., the following holds:

\gap

\noindent $\bullet $ ${\cal N}$ contains $x^{\nu}$ and $x^{\nu+1}$
for all $\nu \in \kappa$ sufficiently large;

\noindent $\bullet $ for all $x \in {\cal N}$, all $\varepsilon > 0$
sufficiently small, all $j \not\in {\cal J}_{i,0}^-(x^{\infty})$ 
and all $i = 0, 1, \cdots, I$, 
\begin{equation} \label{eq:same sign for IDSA PIP}
\onebld_{( \, -\varepsilon,\infty )}(\phi_{ij}(x)) \, = \,
\onebld_{[ \, 0,\infty )}(\phi_{ij}(x)) \, = \, 
\onebld_{[ \, 0,\infty )}(\phi_{ij}(x^{\infty}));
\end{equation}
\noindent $\bullet $ for any $x$ and $x^{\prime}$ both in 
${\cal N}$,
${\cal M}(x) \subseteq {\cal M}^{\boldsymbol{\delta}}(x^{\prime})$.

\gap

Since there are only finitely many index tuples 
$\{ \mathbf{k}^{\nu},\boldsymbol{\ell}^{\, \nu} \}_{\nu=1}^{\infty}$,
there must exist a pair 
$( \mathbf{k}^{\infty},\boldsymbol{\ell}^{\, \infty} )$
and an infinite subset $\kappa^{\prime}$ of $\kappa$ such that $
( \, \mathbf{k}^{\nu},\boldsymbol{\ell}^{\, \nu} \, ) \, = \, 
( \, \mathbf{k}^{\infty},\boldsymbol{\ell}^{\, \infty} \, )$ 
for all $\, \nu \, \in \, \kappa^{\prime}$. Since 
${\cal M}(x^\nu) \subseteq {\cal M}(x^\infty)$, we have 
$( \mathbf{k}^{\infty},\boldsymbol{\ell}^{\, \infty} )$ 
must belong to ${\cal M}(x^{\infty})$.  Further, there must 
exist an infinite subset $\kappa^{\prime \prime}$ of 
$\kappa^{\prime}$ such that
\begin{equation} \label{eq:limiting tuple}
{\left( \, \wh{\cal J}_{i,>}^{\, \pm}(x^{\nu+1}), 
\wh{\cal J}_{i,0}^{\, \pm}(x^{\nu+1}),
\wh{\cal J}_{i,<}^{\, \pm}(x^{\nu+1}) \, \right)} \, = \, 
\left( \, \wh{\cal J}_{i,>}^{\, \infty;\pm}, 
\wh{\cal J}_{i,0}^{\, \infty;\pm},
\wh{\cal J}_{i,<}^{\, \infty;\pm} \, \right), \epc 
\forall \, \nu \, \in \, \kappa^{\prime \prime}.
\end{equation}  
with this common tuple  of index sets 
$\left\{ \left( \, \wh{\cal J}_{i,>}^{\, \infty;\pm}, 
\wh{\cal J}_{i,0}^{\, \infty;\pm},
\wh{\cal J}_{i,<}^{\, \infty;\pm} \, \right)\right\}_{i=0}^I$ satisfying
\begin{equation}
\label{eq:positivity of phi}
\begin{array}{l}
\wh{\cal J}_{i,>}^{\,\infty;+}\, \subseteq \, 
\{ \, j \, \mid \, \psi_{ij} \, > \, 0; \ 
\phi_{ij}^{k_{ij}^{\infty};+}( x^{\infty} ) \, \geq \,
\bar \delta  \, \}, \quad  \wh{\cal J}_{i,>}^{\, \infty;-} \, \subseteq \, 
\{ \, j \, \mid \, \psi_{ij} \, < \, 0; \
-\phi_{ij}^{\ell_{ij}^{\infty};-}( x^{\infty} ) \, \geq \, 
\bar \delta \, \}; \\ [0.1in]
\wh{\cal J}_{i,<}^{\, \infty;+} \, \subseteq \, 
\{ \, j \, \mid \, \psi_{ij} \, > \, 0; \ 
\phi_{ij}^{k_{ij}^{\infty};+}( x^{\infty} ) \, \leq \, 
-\bar \delta \, \}, \quad \wh{\cal J}_{i,<}^{\, \infty;-} \, \subseteq \, 
\{ \, j \, \mid \, \psi_{ij} \, < \, 0; \ 
-\phi_{ij}^{\ell_{ij}^{\infty};-}( x^{\infty} ) \, \leq \, 
-\bar \delta \, \}; \\ [0.1in]
\wh{\cal J}_{i,0}^{\, \infty;+} \, \triangleq \, 
\{ \, j \, \mid \, \psi_{ij} \, > \, 0 \, \} \setminus  
\left( \, \wh{\cal J}_{i,>}^{\, \infty;+} \, \cup \,
\wh{\cal J}_{i,<}^{\, \infty;+} \, \right), \ 
\wh{\cal J}_{i,0}^{\, \infty;-} \, \triangleq \, 
\{ \, j \, \mid \, \psi_{ij} \, < \, 0 \, \}  \setminus  
\left( \, \wh{\cal J}_{i,>}^{\, \infty;-} \, \cup \, 
\wh{\cal J}_{i,<}^{\, \infty;-} \, \right).
\end{array}
\end{equation}
Since $\delta^{\prime}_{\nu} \geq \bar \delta>0$ , we deduce that
$\wh{\cal J}_{i,0}^{-}(x^\infty) \subseteq \wh{\cal J}_{i,0}^{\,\infty;-}$. 
By assumption, since
${\cal J}_{i,0}^-(x^{\infty}) = \emptyset$ for all 
$i = 0, 1, \cdots, I$ and 
$\ell_{ij}^{\, \infty} \in {\cal L}_{ij}(x^{\infty})$, we have
$\phi_{ij}(x^{\infty}) = 
\phi_{ij}^{\ell_{ij}^{\infty};-}(x^{\infty})$.  Thus, it follows that if 
$\psi_{ij} \, < \, 0$, then 
$\phi_{ij}^{\ell_{ij}^{\infty};-}( x^{\infty} ) \, \neq \, 0.$
Hence, there exist a scalar $\bar{\varepsilon} > 0$ and a 
neighborhood ${\cal N}$ of $x^{\infty}$ such that
\begin{equation} \label{eq:consequence of empty}
\begin{array}{l}
\psi_{ij} \, < \, 0 \ \Rightarrow \ \mbox{for all } 
x \, \in \, {\cal N} \ \mbox{and all} \
\varepsilon \, \in \, ( \, 0,\bar{\varepsilon} \, ):\\ [0.1in] 
\mbox{sgn}\left( -\phi_{ij}^{\ell_{ij}^{\infty};-}(x) \right) 
\, = \,
\mbox{sgn}\left( -\phi_{ij}^{\ell_{ij}^{\infty};-}(x) - 
\varepsilon \right) \, = \, 
\mbox{sgn}\left( -\phi_{ij}^{\ell_{ij}^{\infty};-}(x^{\infty})
\right) \, \neq \, 0.
\end{array}
\end{equation}
In what follows, we may assume, without loss of 
generality, that 
$\kappa^{\prime \prime} = \kappa^{\prime} = \kappa$.  We next prove 
that $x^{\infty}$ is a local maximizer of 
(\ref{eq:A_HSCOP}).  First noting that under the assumption 
${\cal J}_{i,0}^-(x^\infty) = \emptyset$, we have  $\displaystyle{
\sum_{j \in \wh{\cal J}_{i,0}^{\, \infty;-}}
} \, \psi_{ij}^- \, \onebld_{[ \, 0,\infty )}\left(
-\phi_{ij}^{\ell_{ij}^{\infty};-}(x)-\varepsilon_{\nu} \right) = \displaystyle{
\sum_{j \in \wh{\cal J}_{i,0}^{\, \infty;-}}
} \, \psi_{ij}^- \, \onebld_{[ \, 0,\infty )}\left(
-\phi_{ij}^{\ell_{ij}^{\infty};-}(x)  \right)$ in the neighborhood of 
$\cal N$ by \eqref{eq:consequence of empty}. Thus, by the constancy of the 
index sets $\{ ( \mathbf{k}^{\nu},\boldsymbol{\ell}^{\, \nu} ) \}_{
\nu \in \kappa}$,
and that of the tuples (\ref{eq:limiting tuple}) and also by 
the fixed-point optimality property of $x^{\nu+1}$, 
the iterate $x^{\nu+1}$ is a globally optimal solution to 
the following problem in the neighborhood
${\cal N}$ of $x^{\infty}$:
\begin{equation*} \label{eq:limiting intermediate}
 \begin{array}{ll}
\displaystyle{
\operatornamewithlimits{\mbox{\bf maximize}}_{x \in P}
} \ & c(x) - \displaystyle{
\frac{\rho}{2}
}  \| x - x^{\infty} \|_2^2  + 
\displaystyle{
\sum_{j \in \wh{\cal J}_{0,0}^{\, \infty;+}}
}   \psi_{0j}^+  \onebld_{[ \, 0,\infty )}\left( 
\phi_{0j}^{k_{0j}^{\infty;+}}(x) \right) 
+ \displaystyle{
\sum_{j \in \wh{\cal J}_{0,0}^{\, \infty;-}}
}   \psi_{0j}^-  \onebld_{[ \, 0,\infty )}\left( 
-\phi_{0j}^{\ell_{0j}^{\infty;-}}(x) \right) \\ [0.2in]
& + \displaystyle{\sum_{j \in \wh{\cal J}_{0,>}^{\, \infty; +}}} \psi_{0j}^+
- \displaystyle{\sum_{j\in \wh{\cal J}_{0,0}^{\,\infty; -} \, \cup\, \wh{\cal J}_{0,<}^{\,\infty; -}} \psi_{0j}^-}\\ [0.2in]
\mbox{\bf subject to }\ & A_{i \bullet} \, x + \displaystyle{
\sum_{j \in \wh{\cal J}_{i,0}^{\, \infty;+}}
} \, \psi_{ij}^+ \, \onebld_{[ \, 0,\infty )}\left(
\phi_{ij}^{k_{ij}^{\infty};+} (x) \right) + \displaystyle{
\sum_{j \in \wh{\cal J}_{i,>}^{\, \infty;+}}
} \, \psi_{ij}^+ + \, \displaystyle{
\sum_{j \in \wh{\cal J}_{i,0}^{\, \infty;-}}
} \, \psi_{ij}^- \, \onebld_{[ \, 0,\infty )}\left(
-\phi_{ij}^{\ell_{ij}^{\infty;-}}(x) \right) \\ [0.2in]
& \geq \, \displaystyle{
\sum_{j \, \in \, \wh{\cal J}_{i,0}^{\, \infty;-} \, \cup \, 
\wh{\cal J}_{i,<}^{\, \infty;-}}
} \, \psi_{ij}^- + \eta_i, \quad \forall i\in [ \, I \, ] \\ [0.2in]
\mbox{\bf and} & \left\{ \begin{array}{ll}
\phi_{ij}^{k_{ij}^{\infty;+}}(x) \, \geq \, 0; \epc & 
\forall  j \, \in \, 
\wh{\cal J}_{i,>}^{\, \infty;+}  \\ [0.1in]
-\phi_{ij}^{\ell_{ij}^{\infty;-}}(x) \, \geq \, 0; 
\epc &\forall j \, \in \, 
\wh{\cal J}_{i,>}^{\, \infty;-}
\end{array}
\right\} \quad \forall i=0,\cdots,I.
\end{array}  
\end{equation*}
with $\varepsilon_{\nu}$ removed from the Heaviside terms.
Since
\[ \begin{array}{l}
\displaystyle{
\sum_{j=1}^{J_i}
} \, \psi_{ij}^+ \, \onebld_{[ \, 0,\infty )}\left( 
\phi_{ij}^{\, k^{\infty}_{ij};+}(x) \right) 
- \displaystyle{
\sum_{j=1}^{J_i}
} \, \psi_{ij}^- \,
\onebld_{[ \, 0,\infty )}\left( 
\phi_{ij}^{\, \ell^{\infty}_{ij};-}(x) \right) \\ [0.3in]
= \displaystyle{
\sum_{j \in \wh{\cal J}_{i,0}^{\, \infty;+}}
} \, \psi_{ij}^+ \,  \onebld_{[ \, 0,\infty )} \left(\phi_{ij}^{\,k^{\infty}_{ij};+}(x)\right) + 
\displaystyle{
\sum_{j \in \wh{\cal J}_{i,0}^{\, \infty;-}}
} \, \psi_{ij}^- \, \onebld_{[ \, 0,\infty )}\left(-\phi_{ij}^{\, \ell^{\infty}_{ij};-}(x)\right) 
+ {\displaystyle{
\sum_{j \in \wh{\cal J}_{i,>}^{\, \infty;+}}
} \, \psi_{ij}^+ 
- \displaystyle{
\sum_{j \in \wh{\cal J}_{i,0}^{\, \infty;-} \,\cup\,\wh{\cal J}_{i,<}^{\, \infty;-}}
} \, \psi_{ij}^-} 
\end{array}
\]
holds for all $x$ sufficiently near $x^{\infty}$
(in a sub-neighborhood of ${\cal N}$ if needed) and all 
$i = 0, 1, \cdots, I$, it follows by a simple limiting argument and the
inclusions (\ref{eq:positivity of phi}) that 
$x^{\infty}$ is feasible and locally optimal 
for \eqref{eq:limit problem} as desired.  The last statement 
of the theorem requires no further argument.  
\end{proof}

The number of subproblems 
$\left( \, \mbox{\bf P}_{\rho}^{\varepsilon_{\nu};
\mathbf{k},\boldsymbol{\ell}}(x^{\nu}) 
\, \right)$ that need to be solved at each iteration is equal 
to the cardinality of the index set
${\cal M}^{\boldsymbol{\delta}}(x^{\nu})$, which, admittedly, may 
not be small if the sum $\displaystyle{
\sum_{i=0}^I
} \, J_i$ is not small.  In this case, a simplified version of 
the algorithm 
may be designed wherein only {\bf one} subproblem is solved with 
the pair $( \mathbf{k},\boldsymbol{\ell} )$ chosen arbitrarily (but
deterministically) from ${\cal M}^{\boldsymbol{\delta}}(x^{\nu})$.
This version is like the original difference-of-convex algorithm 
for minimizing
a difference-of-convex program in the literature
\cite{LeThiPhamDinh18,PhamDinhLeThi97}.  For this simplified 
algorithm, a limit point $\{ x^{\infty} \}$ of the produced 
sequence will satisfy the same convergence property.
Detailed analysis of this variant of Algorithm~\ref{alg:IDSA-PIP} is 
omitted.  One can also design a probabilistic version of the algorithm that 
selects
the pair $( \mathbf{k},\boldsymbol{\ell} )$ randomly based on a similar
selection rule for the probabilistic version of the difference-of-convex
algorithm as described in \cite{FengYuan26}; the convergence is then
in the almost sure sense.

\label{app:PIP}
\begin{algorithm}
\caption{Details of the PIP method}\label{alg:pip_details}
\begin{algorithmic}[1]
\Require A feasible solution $x^0$ 
to problem
$\left( \mbox{\bf P}_{\rho}^{\,\varepsilon;\mathbf{k},\boldsymbol{\ell}}
(\bar{x}) \right)$; 
an initial fraction $r_0$ of integer variables; maximal fraction $r_{\max}$
of integer variables; incremental fraction $r_{\Delta}$ of integer variables;
maximal overall iteration count $\mu_{\max}$; and maximal number of iterations
$\tilde{\mu}_{\max}$ with no improvement in objective.
\Ensure $\mathrm{obj}_{\,\mu}$ and 
\State $\mu \gets 0$, $\tilde{\mu} \gets 0$ (these are, respectively, 
the overall iteration 
count with $\mu_{\max}$ being the maximum and the iteration
count of no improvement in objective with $\tilde{\mu}_{\max}$ being the
maximum). Compute the objective value, denoted $\mathrm{obj}_0$, of
$\left( \mbox{\bf P}_{\rho}^{\,\varepsilon;\mathbf{k},\boldsymbol{\ell}}(\bar{x}) \right)$
at $(x^0,\gamma_0)$.
\While{$\mu < \mu_{\max}$ \textbf{and} $\tilde{\mu} < \tilde{\mu}_{\max}$}
    \For{$i = 0$ to $I$}
        \State $\Phi^+_{i,>} \gets \emptyset$, $\Phi^+_{i,<} \gets \emptyset$
        \For{$j = 1$ to $J_i$}
            \If{$\phi^{k_{ij};+}_{ij}(x^\mu) > 0$}
                \State $\Phi^+_{i,>} \gets \Phi^+_{i,>} \cup \left\{ \phi^{k_{ij};+}_{ij}(x^\mu) \right\}$,
            \ElsIf{$\phi^{k_{ij};+}_{ij}(x^\mu) < 0$}
                \State $\Phi^+_{i,<} \gets \Phi^+_{i,<} \cup \left\{ \phi^{k_{ij};+}_{ij}(x^\mu) \right\}$,
            \Else
                \State Randomly choose $\Phi_i^+ \in \left\{\Phi^+_{i,>}, \Phi^+_{i,<}\right\}$, $\Phi_i^+ \gets \Phi_i^+ \cup \left\{ \phi^{k_{ij};+}_{ij}(x^\mu) \right\}$.
            \EndIf
        \EndFor
        \State $\delta^{+\,\mu}_{i,>} \gets \Call{lower\_quantile}{\Phi^+_{i,>}, r^\mu}$, $\delta^{+\,\mu}_{i,<} \gets -\,\Call{upper\_quantile}{\Phi^+_{i,<}, r^\mu}$;
        \State ${\cal J}^{+\,\mu}_{i,<} \gets
        \left\{ j \in [J_i] \mid \phi^{k_{ij};+}_{ij}(x^\mu) < -\delta^{+\,\mu}_{i,<} \right\}$, ${\cal J}^{+\,\mu}_{i,>} \gets
        \left\{ j \in [J_i] \mid \phi^{k_{ij};+}_{ij}(x^\mu) > \delta^{+\,\mu}_{i,>} \right\}$, and ${\cal J}^{+\,\mu}_{i,0} \gets
        \left\{ j \in [J_i] \mid -\delta^{+\,\mu}_{i,<} \le
        \phi^{k_{ij};+}_{ij}(x^\mu) \le \delta^{+\,\mu}_{i,>} \right\}$.
        \State Similarly derive ${\cal J}^{-\,\mu}_{i,<}$, ${\cal J}^{-\,\mu}_{i,>}$, and ${\cal J}^{-\,\mu}_{i,0}$
        by replacing $\phi^{k_{ij};+}_{ij}(x^\mu)$ with
        $-\phi^{\ell_{ij};-}_{ij}(x^\mu)-\varepsilon$.
    \EndFor
    \State Construct the regularized partial Heaviside problem at $x^{\mu}$
    with the index sets
    $\{{\cal J}^{\pm\,\mu}_{i,<}, {\cal J}^{\pm\,\mu}_{i,>}, {\cal J}^{\pm\,\mu}_{i,0}\}_{i \in [I]}$ as in \eqref{eq:partial HSCOP}
    and solve its corresponding IP problem to obtain the globally optimal solution
    $(x^{\mu+1},\gamma_{\mu+1})$ with optimal value denoted by 
    $\mathrm{obj}_{\,\mu+1}$.
    \If{$\mathrm{obj}_{\mu+1} = \mathrm{obj}_{\mu}$}
        \State $r_{\mu+1} \gets \min\{r_{\mu} + r_{\Delta}, r_{\max}\}$
        \State $\tilde{\mu} \gets \tilde{\mu} + 1$
   \Else
        \State $r_{\mu+1} \gets r_\mu$
        \State $\tilde{\mu} \gets 0$
    \EndIf
    \State $\mu \gets \mu + 1$
\EndWhile
\State \Return $\mathrm{obj}_{\mu}$ and 
$x^{\mu}$
\end{algorithmic}
\end{algorithm}

\section{Extension: the M-HSCOP} \label{sec:MPA-HSCOP}

The multiplicative Heaviside composite function refers to the
multiplication of an open Heaviside composite function and a 
closed Heaviside composite function. Motivated by various 
applications in  multiclass classification to be presented in 
Section~\ref{sec:multiclass classification}, we consider a class of 
multiplicative Heaviside 
composite optimization problems (M-HSCOP), which we denote by
$( \, \mbox{\bf P}_{\rm MHS} \, )$,
with the general mathematical formulation as follows:
\begin{equation} \label{eq:M_HSCOP}
\begin{array}{ll}
\underset{x\in P}{\textbf{maximize}} &
\theta_{\text{MHS}}(x) \triangleq c(x) + \displaystyle{
\sum_{j=1}^{J_0}
} \, \psi_{0j} \, \mathbf{1}_{[ \, 0,\infty )}(\phi_{0j}(x))
\, \mathbf1_{( \, 0,\infty )}(\varphi_{0j}(x)) \\ [0.1in]
\textbf{subject to} 
& A_{ i \bullet } x + \displaystyle{
\sum_{j=1}^{J_i}
} \, \psi_{ij} \, \onebld_{\left[ \, 0 , \infty \right)} 
(\phi_{ij}(x)) \, \mathbf{1}_{(0,\infty)}(\varphi_{ij}(x)) 
\, \geq \, \eta_i, \epc  \forall  i \in [ \, I \, ].
\end{array}
\end{equation}
Each product can be reformulated as the difference of two Heaviside composite 
functions, given below:
    \begin{equation}\label{eq:MHSCOP-approx-by-diff}
        \begin{array}{l}
            \onebld_{[ \, 0,\infty )}(\phi_{ij}(x))
        \, \mathbf{1}_{( \, 0,\infty )}(\varphi_{ij}(x))  =\, 
        \onebld_{[ \, 0,\infty )}(\phi_{ij}(x)) 
        - \onebld_{[ \, 0,\infty )} ( \phi_{ij}(x) )  \cdot \onebld_{[ \, 0,\infty )} ( -\varphi_{ij}(x) )  
        \\ [0.1in] 
        \geq \,
        \onebld_{[ \, 0,\infty )}(\phi_{ij}(x)) 
        - \onebld_{( \, -\varepsilon,\infty )} ( \phi_{ij}(x) )  \cdot \onebld_{( \, -\varepsilon,\infty )} ( -\varphi_{ij}(x) ) 
        \end{array}
    \end{equation}
    with the $\varepsilon$-approximating formulation in the last inequality. Thus the approximation theory and algorithms in 
Sections~\ref{sec:approx formulation} and \ref{sec:ISA-PIP} are readily 
applicable.  As it turns out, there is another approximation strategy, which 
yields a tighter approximation error. Specifically, we consider the approximation for the term $\onebld_{[ \, 0,\infty )}(\phi_{ij}(x))
        \, \mathbf{1}_{( \, 0,\infty )}(\varphi_{ij}(x)) $ with positive coefficient  first, and the argument for a negative coefficient is similar. The product $\onebld_{[ \, 0,\infty )}(\phi_{ij}(x))
        \, \mathbf{1}_{( \, 0,\infty )}(\varphi_{ij}(x)) $ is lower bounded by $\onebld_{[ \, 0,\infty )}(\phi_{ij}(x))
    \, \mathbf{1}_{[ \, \varepsilon,\infty )}(\varphi_{ij}(x))$ with the latter satisfying 
    \[ \begin{array}{ll}
     \onebld_{[ \, 0,\infty )}(\phi_{ij}(x))
    \, \mathbf{1}_{[ \, \varepsilon,\infty )}(\varphi_{ij}(x)) & = 
    \onebld_{[ \, 0,\infty )}(\phi_{ij}(x)) - \onebld_{[ \, 0,\infty )}(\phi_{ij}(x))\cdot \onebld_{( \, -\varepsilon,\infty )}(-\varphi_{ij}(x))\\
    & \geq \onebld_{[ \, 0,\infty )}(\phi_{ij}(x)) 
        - \onebld_{( \, -\varepsilon,\infty )} ( \phi_{ij}(x) )  \cdot \onebld_{( \, -\varepsilon,\infty )} ( -\varphi_{ij}(x) ),
    \end{array}\]
   which thus yields a tighter $\varepsilon$-approximation. Denoted 
$( \, \mbox{\bf P}_{\rm MHS}^{\, \varepsilon} \, )$, the resulting
$\varepsilon$-approximating problem is as follows:

\begin{equation} \label{eq:M_HSCOP_approx}
\begin{array}{ll}
\underset{x\in P}{\textbf{maximize}} &
\theta_{\text{MHS}}^{\, \varepsilon}(x) \, \triangleq \, 
c(x) + \displaystyle{
\sum_{j=1}^{J_0}
} \, \psi_{0j}^+ \, \onebld_{[ \, 0,\infty )}(\phi_{0j}(x))
\, \mathbf{1}_{[ \, \varepsilon,\infty )}(\varphi_{0j}(x)) 
\\ [0.1in]
& \hspace{0.5in} - \,
\displaystyle{
\sum_{j=1}^{J_0}
} \, \psi_{0j}^- \, 
\mathbf{1}_{( \, -\varepsilon,\infty )}(\phi_{0j}(x)) \,
\onebld_{( \, 0,\infty )}(\varphi_{0j}(x)) \\ [0.1in]
\textbf{subject to} 
& A_{ i \bullet } x + \displaystyle{
\sum_{j=1}^{J_i}
} \, \psi_{ij}^+ \, \onebld_{[ \, 0,\infty )}(\phi_{ij}(x))
\, \mathbf{1}_{[ \, \varepsilon,\infty )}(\varphi_{ij}(x)) 
\\ [0.1in]
& \hspace{0.5in} - \,
\displaystyle{
\sum_{j=1}^{J_i}
} \, \psi_{ij}^- \, 
\mathbf{1}_{( \, -\varepsilon,\infty )}(\phi_{ij}(x)) \,
\onebld_{( \, 0,\infty )}(\varphi_{ij}(x)) \, \geq \, \eta_i, 
\epc \forall \, i \, \in \, [ \, I \, ].
\end{array}
\end{equation}
Besides the upper semicontinuity of the objective
function and the closedness of the feasible set, the above 
approximation also ensures the validity of 
the monotonic properties of 
$\theta_{\rm MHS}^{\, \varepsilon}(x)$:
\begin{equation}
\theta_{\rm MHS}(x) \, \geq \, 
\theta_{\rm MHS}^{\, \varepsilon}(x) \, \geq \, 
\theta^{\varepsilon^{\, \prime}}_{\rm MHS}(x), \epc \forall \, 
\varepsilon \, \in \, ( \, 0,\varepsilon^{\, \prime} \, ).
\label{eq:monotonic_theta_epsilon}
\end{equation}

Define the index sets associated with $\bar x$
\[ 
\begin{array}{l}
{\cal J}_{i,0}^{+}(\bar{x}) \, \triangleq \,
\left\{ \, j \in [J_i] \, \mid \, \psi_{ij} \, > \, 0;\,
\varphi_{ij}(\bar{x}) = 0 \right\}, \,\,  
{\cal J}_{i,>}^{+}(\bar{x}) \, \triangleq \,
\left\{ \, j \in [J_i] \, \mid \, \psi_{ij} \, > \, 0;\,
\varphi_{ij}(\bar{x}) > 0 \right\}, \\ [0.1in]
{\cal J}_{i,<}^{+}(\bar{x}) \, \triangleq \, 
\left\{ \, j \in [J_i] \, \mid \,  \psi_{ij} \, > \, 0;\,
\varphi_{ij}(\bar{x}) < 0 \right\} \\ [0.1in]
{\cal J}_{i,0}^{+}(\bar{x}) \, \triangleq \,
\left\{ \, j \in [J_i] \, \mid \, \psi_{ij} \, < \, 0;\,
\phi_{ij}(\bar{x}) = 0 \right\}, \,\,  
{\cal J}_{i,>}^{+}(\bar{x}) \, \triangleq \,
\left\{ \, j \in [J_i] \, \mid \, \psi_{ij} \, < \, 0;\,
\phi_{ij}(\bar{x}) > 0 \right\}, \\ [0.1in]
{\cal J}_{i,<}^{+}(\bar{x}) \, \triangleq \, 
\left\{ \, j \in [J_i] \, \mid \,  \psi_{ij} \, < \, 0;\,
\phi_{ij}(\bar{x}) < 0 \right\}
\end{array}
\] 
Similar to Proposition~\ref{pr:N&S II locmax AHSCOP}, we
have the following result.

\begin{proposition}\label{pr:equal super MHSCOP} \rm
Let $\bar{x} \in P$ be given.  Let each $\phi_{ij}$ and $\varphi_{ij}$ be 
continuous.  The following two statements hold.

\gap

\noindent {\bf (A)}
If $\bar x$ is a local maximizer of (\ref{eq:M_HSCOP}), then 
$\bar x$ is a local maximizer of (\ref{eq:M_HSCOP_approx}) 
for all $\varepsilon\in (0,\bar{\varepsilon}]$.

\noindent {\bf (B)} Suppose $\bar x$ satisfies the LSI condition:
$\varphi_{ij}$ is nonpositive near $\bar{x}$ for all 
$j \in {\cal J}_{i,0}^{+}(\bar{x})$ and all $i = 0, 1, \cdots, I$; 
$\phi_{ij}$ is nonnegative near $\bar{x}$ for all 
$j \in {\cal J}_{i,0}^{-}(\bar{x}) $ and all $i = 0,1,\cdots, I$.
If $\bar x$ is a local maximizer of (\ref{eq:M_HSCOP_approx}),
then $\bar x$ is a local maximizer of (\ref{eq:M_HSCOP}).
\end{proposition}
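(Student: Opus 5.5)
The plan is to mimic the proof of Proposition~\ref{pr:N&S II locmax AHSCOP}, now working with the pair of inner functions $(\phi_{ij},\varphi_{ij})$. Write $h_{ij}(x)\triangleq \onebld_{[0,\infty)}(\phi_{ij}(x))\,\onebld_{(0,\infty)}(\varphi_{ij}(x))$ for the original product. For its approximation, set $h_{ij}^{\varepsilon}(x)\triangleq \onebld_{[0,\infty)}(\phi_{ij}(x))\,\onebld_{[\varepsilon,\infty)}(\varphi_{ij}(x))$ when $\psi_{ij}>0$, and $h_{ij}^{\varepsilon}(x)\triangleq \onebld_{(-\varepsilon,\infty)}(\phi_{ij}(x))\,\onebld_{(0,\infty)}(\varphi_{ij}(x))$ when $\psi_{ij}<0$. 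Then $\psi_{ij}h_{ij}^{\varepsilon}\le \psi_{ij}h_{ij}$ everywhere. Consequently $X_{\rm MHS}^{\varepsilon}\subseteq X_{\rm MHS}$ and $\theta_{\rm MHS}^{\varepsilon}\le\theta_{\rm MHS}$, exactly as in (\ref{eq:monotonic_theta_epsilon}).

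The first step is a local sign-invariance analogue of (\ref{eq:sign invariant}). Reading the index sets with the evident intended labels (${\cal J}^{+}$ refers to $\varphi$ when $\psi_{ij}>0$, ${\cal J}^{-}$ refers to $\phi$ when $\psi_{ij}<0$), I will find a neighborhood ${\cal N}$ of $\bar x$ and an $\bar\varepsilon>0$ such that $\psi_{ij}h_{ij}^{\varepsilon}(x)=\psi_{ij}h_{ij}(x)$ for all $x\in{\cal N}$ and all $\varepsilon\in(0,\bar\varepsilon]$, for every pair $(i,j)$ except those with $j\in{\cal J}^{+}_{i,0}(\bar x)$ (when $\psi_{ij}>0$) or $j\in{\cal J}^{-}_{i,0}(\bar x)$ (when $\psi_{ij}<0$). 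The two cases are:
\begin{itemize}
\item If $\psi_{ij}>0$ and $\varphi_{ij}(\bar x)\neq0$, continuity gives a neighborhood on which $\varphi_{ij}$ keeps its sign and, when it is positive, stays above some $\bar\varepsilon$. Then $\onebld_{[\varepsilon,\infty)}(\varphi_{ij}(x))=\onebld_{(0,\infty)}(\varphi_{ij}(x))$ there.
\item If $\psi_{ij}<0$ and $\phi_{ij}(\bar x)\neq0$, Lemma~\ref{lm:epsilon_approximation}(iii) gives $\onebld_{(-\varepsilon,\infty)}(\phi_{ij}(x))=\onebld_{[0,\infty)}(\phi_{ij}(x))$ on a neighborhood.
\end{itemize}
Note that $\phi_{ij}$ in the first case and $\varphi_{ij}$ in the second are untouched by the approximation, so nothing needs to be checked for them. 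At $\bar x$ itself the identity also holds for the excluded indices. Indeed, if $\varphi_{ij}(\bar x)=0$ then both products vanish, and if $\phi_{ij}(\bar x)=0$ then both indicators in $\phi_{ij}$ equal one. Hence $\theta^{\varepsilon}_{\rm MHS}(\bar x)=\theta_{\rm MHS}(\bar x)$, and $\bar x$ is feasible for both problems as soon as it is feasible for either one.

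For part (A), let $\bar x$ be optimal for (\ref{eq:M_HSCOP}) on ${\cal N}\cap X_{\rm MHS}$. Take any $x\in X^{\varepsilon}_{\rm MHS}\cap{\cal N}$. Then $x\in X_{\rm MHS}$ by the minorization, and the chain $\theta^{\varepsilon}_{\rm MHS}(\bar x)=\theta_{\rm MHS}(\bar x)\ge\theta_{\rm MHS}(x)\ge\theta^{\varepsilon}_{\rm MHS}(x)$ finishes the argument, exactly as in Proposition~\ref{pr:N&S II locmax AHSCOP}(A). This is why $\bar\varepsilon$ is the pointwise threshold just obtained.

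For part (B), the key step, and the only place the LSI hypothesis enters, is to show $X_{\rm MHS}\cap{\cal N}=X^{\varepsilon}_{\rm MHS}\cap{\cal N}$ together with $\theta_{\rm MHS}=\theta^{\varepsilon}_{\rm MHS}$ on ${\cal N}$, after shrinking ${\cal N}$ so that LSI holds on it. For the excluded indices:
\begin{itemize}
\item If $\psi_{ij}>0$ and $j\in{\cal J}^{+}_{i,0}(\bar x)$, LSI gives $\varphi_{ij}\le0$ on ${\cal N}$. Then both $\onebld_{(0,\infty)}(\varphi_{ij})$ and $\onebld_{[\varepsilon,\infty)}(\varphi_{ij})$ vanish, so $h_{ij}=h_{ij}^{\varepsilon}=0$.
\item If $\psi_{ij}<0$ and $j\in{\cal J}^{-}_{i,0}(\bar x)$, LSI gives $\phi_{ij}\ge0$ on ${\cal N}$. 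Then both indicators in $\phi_{ij}$ equal one, so $h_{ij}=h^{\varepsilon}_{ij}=\onebld_{(0,\infty)}(\varphi_{ij})$.
\end{itemize}
Combined with the first step, every term agrees on ${\cal N}$. Therefore the two problems coincide locally, and local optimality of $\bar x$ for (\ref{eq:M_HSCOP_approx}) transfers directly to (\ref{eq:M_HSCOP}).

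The main obstacle I expect is bookkeeping rather than depth. The displayed index sets reuse the label ${\cal J}^{+}$ for the negative-coefficient case, and I will interpret those as ${\cal J}^{-}$. One must also make sure that the nonexcluded factor in each product needs no sign control, since only one factor is perturbed. The continuity-based choice of $\bar\varepsilon$ in the $\varphi$ case should be stated explicitly, because Lemma~\ref{lm:epsilon_approximation} covers only the open-interval shift and not $\onebld_{[\varepsilon,\infty)}$.
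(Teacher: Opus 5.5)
Your proposal is correct and follows essentially the paper's route. Both arguments proceed the same way: first a local sign-invariance identity on the nondegenerate indices, where the paper likewise uses a continuity analogue of Lemma~\ref{lm:epsilon_approximation}(iii) for $\onebld_{[\varepsilon,\infty)}$; then the minorization chain for (A); and finally the LSI condition on ${\cal J}^{\pm}_{i,0}(\bar x)$, which makes the two problems coincide near $\bar x$ for (B). As in the paper, part (B) tacitly needs $\varepsilon\in(0,\bar{\varepsilon}]$. You use this when you invoke your first step, so state it explicitly.
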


\begin{proof}
(A) Similar to property (iii) in Lemma \ref{lm:epsilon_approximation}, 
the closed Heaviside function $\onebld_{[ \, \varepsilon,\infty)}(\bullet)$ 
has the property that for any scalar $t_*\ne 0$, a scalar $\varepsilon_*>0$ 
and a neighborhood ${\cal T}_*$ of $t_*$ exist such that
    \begin{equation} \label{eq:limit indicator closed}
    \onebld_{[ \, \varepsilon,\infty )}(t) \, = \,
    \onebld_{( \, 0, \infty )}(t_*), \epc \forall \, ( t,\varepsilon ) 
    \in {\cal T}_* \times ( 0,\varepsilon_* );
    \end{equation}
These two properties yield that there must exist a neighborhood ${\cal N}$ 
of ${\bar x}$ and a scalar $\bar{\varepsilon} > 0$ (depend on $\bar x$) 
such that for all $x\in {\cal N}$, all 
$\varepsilon\in (0,\bar{\varepsilon}]$, and all $i=1,\ldots,I$:
\begin{equation} \label{eq:local_sign_invariance_MHSCOP}
\begin{array}{l}
\onebld_{( \, - \varepsilon,\infty )}(\phi_{ij}(x))
\, = \,  \onebld_{[ \, 0,\infty )}(\phi_{ij}(x)) 
\, = \,  \onebld_{[ \, 0 ,\infty )}(
\phi_{ij}( \overline { x } ) ), \epc \forall \, 
j \, \in \, \mathcal{J}_{i,>}^{-}(\bar{x}) \cup 
\mathcal{J}_{i,<}^{-}(\bar{x}) \\
\onebld_{[ \, \varepsilon,\infty )}(\varphi_{ij}(x))
\, = \,  \onebld_{( \, 0,\infty )}(\varphi_{ij}(x)) 
\, = \,  \onebld_{( \, 0 ,\infty )}(
\varphi_{ij}( \overline { x } ) ), \epc \forall \, 
j \, \in \, \mathcal{J}_{i,>}^{+}(\bar{x}) \cup 
\mathcal{J}_{i,<}^{+}(\bar{x}) 
\end{array}
\end{equation}
By the same argument as in the proof of 
Proposition~\ref{pr:N&S II locmax AHSCOP}, we obtain (A).

\noindent (B) Suppose $\bar x$ is optimal for $( \, \mbox{\bf P}_{\rm MHS}^{\, \varepsilon} \, )$ in ${\cal N}$. Similar to Proposition \ref{pr:N&S II locmax AHSCOP}, we need to show the inclusion $X_{\rm MHS} \cap {\cal N} \subseteq X_{\rm MHS}^{\varepsilon} \cap {\cal N}$. Let $x\in X_{\rm MHS}\cap  {\cal N}$; we have, for $i=0,1,\ldots,I$, 
\begin{equation}
    \begin{array}{l}
        \displaystyle{
        \sum_{j=1}^{J_i}
        } \, \psi_{ij}^+ \, \onebld_{[ \, 0,\infty )}(\phi_{ij}(x))
        \, \mathbf{1}_{( \, 0,\infty )}(\varphi_{ij}(x)) 
        \\ [0.1in]
        =\, \underbrace{\displaystyle{
        \sum_{j\in {\cal J}_{i,0}^+(\bar x)} 
        } \, \psi_{ij}^+ \, \onebld_{[ \, 0,\infty )}(\phi_{ij}(x))
        \, \mathbf{1}_{[ \, \varepsilon,\infty )}(\varphi_{ij}(x))
        }_{\rm by\ LSI\ condition}
        +  \underbrace{\displaystyle{
        \sum_{j\notin {\cal J}_{i,0}^+ (\bar x)} 
        } \, \psi_{ij}^+ \, \onebld_{[ \, 0,\infty )}(\phi_{ij}(x))
        \, \mathbf{1}_{[ \, \varepsilon,\infty )}(\varphi_{ij}(x))
        }_{\rm by \ (\ref{eq:local_sign_invariance_MHSCOP})}
        \\ [0.3in]
        \displaystyle{
        \sum_{j=1}^{J_i}
        } \, \psi_{ij}^- \, \onebld_{[\,0,\infty )}(\phi_{ij}(x))
        \, \mathbf{1}_{( \, 0,\infty )}(\varphi_{ij}(x)) 
        \\ [0.1in]
        =\, \underbrace{\displaystyle{
        \sum_{j\in {\cal J}_{i,0}^- (\bar x)} 
        } \, \psi_{ij}^- \, \onebld_{( \, -\varepsilon,\infty )}(\phi_{ij}(x))
        \, \mathbf{1}_{(\, 0 ,\infty )}(\varphi_{ij}(x))
        }_{\rm by\ LSI\ condition}
        +  \underbrace{\displaystyle{
        \sum_{j\notin {\cal J}_{i,0}^-(\bar x)} 
        } \, \psi_{ij}^- \, \onebld_{( \, -\varepsilon,\infty )}(\phi_{ij}(x))
        \, \mathbf{1}_{(\,0,\infty )}(\varphi_{ij}(x))
        }_{\rm by \ (\ref{eq:local_sign_invariance_MHSCOP})}
    \end{array}
\end{equation}
By the same argument as in the proof of 
Proposition~\ref{pr:N&S II locmax AHSCOP}, we obtain (B). 
\end{proof}

Noticing that
\[ \begin{array}{l}
\onebld_{[ \, 0,\infty )}(\phi_{ij}(x))
\, \mathbf{1}_{[ \, \varepsilon,\infty )}(\varphi_{ij}(x)) = \onebld_{[ \, 0,\infty )}(\min\{ \phi_{ij}(x), \,
\varphi_{ij}(x) - \varepsilon \}) \\[0.1in]
\mathbf{1}_{( \, -\varepsilon,\infty )}(\phi_{ij}(x)) \,
\onebld_{( \, 0,\infty )}(\varphi_{ij}(x)) 
\, = \,
\mathbf{1}_{( \, -\varepsilon,\infty )}(\min\{ \phi_{ij}(x), \,
\varphi_{ij}(x) + \varepsilon\} ) ,
\end{array}\]
we see that the $\varepsilon$-approximating problem 
(\ref{eq:M_HSCOP_approx}) is equivalent to
\[
\begin{array}{ll}
\underset{x\in P}{\textbf{maximize}} &
\theta_{\text{MHS}}^{\, \varepsilon}(x) \, \triangleq \, 
c(x) + \displaystyle{
\sum_{j=1}^{J_0}
} \, \psi_{0j}^+ \, \onebld_{[ \, 0,\infty )}(\min\{
\phi_{0j}(x), \, \varphi_{0j}(x) - \varepsilon\} ) 
\\ [0.1in]
& \hspace{0.5in} - \,
\displaystyle{
\sum_{j=1}^{J_0}
} \, \psi_{0j}^- \, 
\mathbf{1}_{( \, -\varepsilon,\infty )}(\min\{ \phi_{0j}(x), \,
\varphi_{0j}(x) + \varepsilon\} ) \\ [0.1in]
\textbf{subject to} &  A_{ i \bullet } x + \displaystyle{
\sum_{j=1}^{J_i}
} \, \psi_{ij}^+ \, \onebld_{[ \, 0,\infty )}(\min\{ 
\phi_{ij}(x), \, \varphi_{ij}(x) - \varepsilon\} ) \\[0.15in]
&- \,
\displaystyle{
\sum_{j=1}^{J_i}
} \, \psi_{ij}^- \, 
\mathbf{1}_{( \, -\varepsilon,\infty )}(\min\{ \phi_{ij}(x),\,
\varphi_{ij}(x) + \varepsilon\} ) \, \geq \, \eta_i, \epc \mbox{for all $i = 1, \cdots, I$} 
\end{array}
\]
which is in the form of problem 
(\ref{eq:partitioned I AHSCOP}).  Thus the
results in 
Sections~\ref{sec:PA decomp} and
\ref{sec:ISA-PIP} are applicable;
in particular, the IDSA-PIP algorithm can be applied.  We omit the details.

\section{Case Studies for Multiclass Classification}
\label{sec:multiclass classification}
We conduct computational experiments for two types 
of multiclass classification models: scored-based model in 
Section~\ref{sec:score based classification} and tree-based model 
in Section~\ref{sec:tree based classification}. The goal of the 
classification 
is to maximize the accuracy subject to precision constraints. 
Our experimental results show that (1) for both models, IDSA-PIP 
significantly 
outperforms Full MIP (directly solving (\ref{eq:partitioned I AHSCOP}) by 
its full MIP formulation) in both feasibility and optimality using much less 
computational time; and (2) tree-based classification model solved by IDSA-PIP 
method Pareto-dominates other state-of-the-art decision tree models in 
terms of accuracy and precision.
All numerical experiments are performed on a Dell PowerEdge R940xa server equipped with Intel(R) Xeon(R) Platinum 8260 CPUs (96 physical cores, 192 logical processors, 2.40 GHz) and 1 TB RAM. Unless otherwise specified\footnote{The implementation of \texttt{BinOCT} (see Section \ref{sec:tree based results}) is adapted from \cite{verwer2019learning} coded in Python 3.10 with the Cplex 22.1.2.0 package.}, all computations are coded in Python 3.11 with Numpy, Pandas, Scipy, Gurobipy 11.0.3 packages. 

\subsection{Score-based multiclass classification}
\label{sec:score based classification}

Let $\{(X^{s},y_{s})\}_{s=1}^{N}$ be the dataset of size $N$ with 
$y_{s} \in [\,J\,]\triangleq\{1,2,\ldots,J\}$ and $X^{s}\in\mathbb{R}^p$. 
For each class $j\in[\,J\,]$, we assign a parameterized score function 
$X^s \to w_{j}^{\top}X^{s}+b_{j}$, where 
$(w_{j}, b_{j}) \in \mathbb{R}^{p+1}$ are parameters to be determined.
Sample $s$ is classified to class $j$ if and only if its score satisfies the
following conditions:
\[
\begin{array}{l}
w_{j}^{\top} X^{s}+b_{j} > w_{ m }^{\top}X^{s} +b_{ m }, \epc 
\forall\, 1\le m < j \\
w_{j}^{\top}X^{s}+b_{j} \ge w_{ n }^{\top}X^{s}+b_{ n }, \epc 
\forall\, j < n \le J,
\end{array}
\]
or equivalently, 
\[
j = \min\left\{\underset{m\in [\, J\, ]}{\arg\max} \left\{w_{ m }^{\top}X^{s} +b_{ m }\right\} \right\}
\]
meaning that the sample is classified into the class with the highest score, and in cases where multiple classes achieve the same highest score, the class with the smallest index is selected.
For the sake of concise expression, we define 
\begin{equation}
\label{eq:definition_h}
    \begin{array}{l}
    h_{m,n}(W,\mathbf{b};x)\triangleq (w_{m}-w_{n})^{\top}x+(b_m-b_n),\epc \forall\, m,\ n\in [\, J\,] \\ [0.1in] 
    W=\begin{bmatrix}
        w_1 & w_2 & \cdots & w_{J}
    \end{bmatrix}\in \mathbb{R}^{p\times J},\ \mathbf{b}\in\mathbb{R}^{J}.
    \end{array}
\end{equation} 
There are multiple commonly used metrics in classification problems, which all lead to the multiplication of Heaviside composite functions:
\begin{itemize}
\item {\bf Accuracy of classification:}
\begin{equation}
\begin{array}{l}
\displaystyle{
\frac{\mbox{\#\ of samples that are correctly classified}}{N}
} \\ [0.1in]
= \, \displaystyle{
\frac{1}{N}
} \, \displaystyle{
\sum_{s=1}^{N}
} \, \left[ \, \mathbf{1}_{[0,\infty)}\left( \,
\min\limits_{y_{s}<j\le J}h_{y_{s},j}(W,\mathbf{b};X^{s}) \, \right) 
\, \right] \, \left[ \, 
\mathbf{1}_{(0,\infty)}\left( \, 
\underset{1\le j<y_{s}}{\min}h_{y_{s},j}(W,\mathbf{b};X^{s}) \, \right)
\, \right]
\end{array}  
\end{equation}

\item {\bf Precision of class $j$:}
\begin{equation}
\begin{array}{l}
\displaystyle{
\frac{\mbox{\# of samples that are correctly classified as label $j$}}{
\mbox{\# of samples that are classified as label $j$}}
} \\ [0.1in]
= \, \displaystyle{
\frac{\displaystyle{
\sum_{s=1}^{N}
} \, \left[ \, \mathbf{1}\{y_{s} = j\} \, \mathbf{1}_{[0,\infty)} \,
\left( \, \min_{j<m\le J}h_{j,m}(W,\mathbf{b};X^{s}) \, \right) 
\, \right] \, \left[ \, \mathbf{1}_{(0,\infty)}\left( \,
\underset{1\le n<j}{\min}h_{j,n}(W,\mathbf{b};X^{s}) \, \right) \, \right]}{
\displaystyle{
\sum_{s=1}^N
} \, \left[ \, \mathbf{1}_{[ \, 0,\infty )}\left( \, \min\limits_{j<m\le J} 
h_{j,m}(W,\mathbf{b};X^{s}) \, \right) \, \right] \, 
\left[ \, \mathbf{1}_{(0,\infty)}\left( \, \underset{1\le n<j}{\min} 
h_{j,n}(W,\mathbf{b};X^{s}) \, \right) \, \right]}
}
\end{array}
\end{equation}

\item {\bf Recall of class $j$:}
\begin{equation}
\begin{array}{l}
\displaystyle{
\frac{\mbox{\# of samples that are correctly classified as label $j$}}{
\mbox{\# of samples with label $j$}}
} \\ [0.1in]
= \, \displaystyle{
\frac{\displaystyle{
\sum_{s=1}^N
} \, \left[ \, \mathbf{1}\{y_{s} = j\} \,
\mathbf{1}_{[0,\infty)} \, \left( \, 
\min_{j<m\le J}h_{j,m}(W,\mathbf{b};X^{s}) \, \right) \, \right] \,
\left[ \, \mathbf{1}_{(0,\infty)}\left( \,
\underset{1\le n<j}{\min}h_{j,n}(W,\mathbf{b};X^{s}) \, \right) \, \right]}{
\displaystyle{
\sum_{s=1}^{N}
} \, \mathbf{1}\{y_{s} = j\}}
} .
\end{array}
\end{equation}
\end{itemize}
 
In learning the classification model, one may use a combination of the 
above metrics either in the objective or constraints, depending on the 
contexts.  The reference \cite{grandini2008metrics} defines the macro 
average precision as $\displaystyle{
\frac{1}{J}
} \, \displaystyle{
\sum_{j\in[\,J\,]}
} \, \mbox{precision}_j$, and the micro average precision as 
$\displaystyle{
\frac{1}{N}
} \, \displaystyle{
\sum_{j\in[\,J\,]}
} \, \mbox{precision}_j$.  If (average) precision is set as the objective, 
one may resort to the new methodology on fractional Heaviside programming
that is presently being developed in \cite{LiuPang2026}.
In the following, we consider the maximization of the accuracy with 
precision constraints for a subset of labels $\mathcal{J}\subseteq[\, J\,]$, 
each controlled by a given threshold $\beta_{j} \in (0,1)$.  
Noticing that without margin consideration, the accuracy under Heaviside 
formulation is positive homogeneous and thus may lead to numerical issues 
in maximization.  Thus, we employ a modified accuracy with a soft margin
as the objective and reformulate the precision constraint, yielding the 
following formulation of the classification problem:
\begin{equation}\label{eq:P_score_classitication}
\begin{array}{l}
\underset{(W,\mathbf{b})\in P}{\textbf{maximize}} \
\displaystyle\frac{1}{N} 
\displaystyle{
\sum_{s=1}^{N}
} \ \mathbf{1}_{[0,\infty)}\left( \displaystyle{
\min_{\substack{j\in[\,J\,],\\j\ne y_{s}}}
} \, h_{y_{s},j}(W,\mathbf{b};X^{s})-1 \right) \\ [0.1in]
\textbf{subject to } \mbox{ for all } j \in \, \mathcal J: \\ [3pt]
\displaystyle{
\sum_{s=1}^{N}
} \, \left[ \, \mathbf{1}\{y_{s} = j\} \, \mathbf{1}_{[0,\infty)}\Big( \,
\displaystyle{\min_{j<m\le J}
} \, h_{j,m}(W,\mathbf{b};X^{s}) \, \Big) \right] \, \left[ \, 
\onebld_{(0,\infty)}\Big( \, \displaystyle{
\min_{1\le n< j}
} \, h_{j,n}(W,\mathbf{b};X^{s}) \, \Big) \, \right] \\ 
\hspace{0.2in} -\, \beta_j \, \displaystyle{
\sum_{s=1}^{N}
} \, \left[ \, \mathbf{1}_{[0,\infty)}\Big( \, \displaystyle{
\min_{j<m\le J}
} \, h_{j,m}(W,\mathbf{b};X^{s}) \, \Big) \, \right] \, \left[ \, 
\onebld_{(0,\infty)}\Big( \, \displaystyle{
\min_{1\le n< j}
} \, h_{j,n}(W,\mathbf{b};X^{s}) \, \Big) \, \right] \, \ge \, 0  \\ [0.2in]
\displaystyle{
\sum_{s=1}^{N}
} \, \left[ \, \onebld_{[\, 0,\infty )}\Big( \, \displaystyle{
\min_{j<m\le J}
} \, h_{j,m}(W,\mathbf{b};\, X^{s}) \, \Big) \, \right] \, \left[ \,
\onebld_{(0,\infty)}\Big( \, \displaystyle{
\min_{1\le n< j}
} \, h_{j,n}(W,\mathbf{b};X^{s}) \, \Big) \, \right] \, \geq \, 1,
\end{array}
\end{equation}
where $P\triangleq\{(W,\mathbf{b})\in\mathbb{R}^{(p+1)\times J} \, \mid \, 
\|w_{j}\|_1 \le \tau,\ |b_{j}|\le \tau,\ \forall\, j\in [\, J\,]\}$, and 
the second 
inequality equivalently ensures that the denominator in the precision 
is nonzero by the  discretely valued summation term.  The above formulation 
contains multiple products of an open Heaviside function and a closed 
Heaviside function, each can be approximated by a single closed
Heaviside function.  Indeed, for $(j,s) \in \mathcal J \times [\, N\, ]$, 
the product
\[
\left[ \, \mathbf{1}_{[ \, 0,\infty)}\left( \, \displaystyle{
\min_{j<m\le J}
} \, h_{j,m}(W,\mathbf{b};X^{s}) \, \right) \, \right] \, \left[ \,
\onebld_{( \, 0,\infty)}\left( \, \displaystyle{
\min_{1\le n< j}} h_{j,n}(W,\mathbf{b};X^{s}) \, \right) \, \right]
\]
is lower approximated by:
\begin{equation*}
\begin{array}{l}
\left[ \, \mathbf{1}_{[0,\infty)} \left( \, \displaystyle{
\min_{j<m\le J}
} \, h_{j,m}(W,\mathbf{b};X^{s}) \, \right) \, \right] \, \left( \,
\onebld_{[ \, \varepsilon,\infty)}\left( \, \displaystyle{
\min_{1\le n< j}
} \, h_{j,n}(W,\mathbf{b};X^{s}) \, \right) \, \right] \\ [0.2in]
\epc \, = \, \onebld_{[ \, 0,\infty)}\left( \, \displaystyle{
\min_{1\le n <j<m \le J}
} \, \left\{ \, h_{j,n}(W,\mathbf{b};X^{s}) -
\varepsilon,h_{j,m}(W,\mathbf{b};X^{s}) \, \right\} \, \right);
\end{array}
\end{equation*}
and similarly upper approximated  by:
\[ 
\onebld_{( \, -\varepsilon,\infty)} \left( \displaystyle{
\min_{1\le n <j<m \le J} 
} \, \left\{ \, h_{j,n}(W,\mathbf{b};X^{s}) - 
\varepsilon,h_{j,m}(W,\mathbf{b};X^{s}) \, \right\} \, \right).
\] 
To construct an $\varepsilon$-approximating problem 
for \eqref{eq:P_score_classitication}, the above two lower/upper 
approximations are used depending on the sign of coefficients.  Details
are not repeated.
  
\subsubsection{Numerical results}
To evaluate the performance of the Iterative Decomposed Shrinkage Algorithm(IDSA) and particularly the effectiveness of decomposition and shrinkage techniques, we implement the following methods for comparisons:
\begin{enumerate}
    \item \textbf{Full MIP}: with fixed $\varepsilon$, solve 
    $\left(\mathbf{P}_{\mathrm{AHS}}^{\varepsilon}\right)$ by the 
    IP solver {\sc Gurobi} 
    to its full MIP formulation with 3600s time limit.
    
    \item \textbf{PIP}: with fixed $\varepsilon$, solve $\left(\mathbf{P}_{\mathrm{AHS}}^{\varepsilon}\right)$ by the PIP Algorithm~\ref{app:PIP} without the decomposition of inner PA functions.
    
    \item \textbf{ISA-PIP}: A variant of Algorithm~\ref{alg:IDSA-PIP}, 
    in which the inner loop computation is by PIP algorithm applied to 
    $\left(\mathbf{P}_{\mathrm{AHS}}^{\varepsilon_\nu}\right)$ without the decomposition of the inner PA functions.  
    
   \item \textbf{IDSA-PIP}: A variant of Algorithm~\ref{alg:IDSA-PIP} 
   in which the inner loop computation is by the 
   PIP~Algorithm~\ref{alg:pip_details} 
   applied to a decomposed approximating subproblem 
   $\left(\mathbf{P}_{\rho}^{\varepsilon_\nu;\mathbf{k}_\nu,\boldsymbol{\ell}_\nu }\right)$ with an index pair $(\mathbf{k}_\nu,\boldsymbol{\ell}_\nu)$ arbitrarily selected from ${\cal M}^{\boldsymbol{\delta}}(x^\nu)$.
\end{enumerate}
 
We implement the above methods in 16 instances with six datasets respectively.
Detailed set-up and results are provided in 
Appendix~\ref{app:multi-setting}. In comparison with Full MIP, the PIP 
variants achieve higher objectives on 61.46\% (PIP), 70.83\% (ISA-PIP), 
and 94.79\% (IDSA-PIP) of the instances, with average improvements of 
1.97\% (PIP), 2.20\% (ISA-PIP), and 1.95\% (IDSA-PIP) in training accuracy, 
and 1.93\% (PIP), 2.18\% (ISA-PIP), and 1.98\% (IDSA-PIP) in test accuracy, 
respectively, illustrating the effectiveness of shrinkage and decomposition 
techniques. Figure~\ref{fig:multi-performance-comparison} summarizes the 
performance for six datasets in terms of feasibility and objective value. 
Results show the substantial advantage of IDSA-PIP method among 16 instances 
for all 6 datasets. 
Specifically, Figure \ref{fig:multi-performance-comparison}(a) computes the percentage of problem instances for which the method finds a solution meeting the precision constraint  among the four methods.  
It shows that the PA decomposition and shrinkage techniques enhance precision performance of the PIP algorithm with more than 10\%-50\% instances satisfying precision constraints. In particular, IDSA-PIP achieves feasibility under  precision constraints in all instances. For the \textit{robo} dataset which has  small $J$ and $p$, all methods fully satisfy  precision constraints. For datasets involving a large $J$ (e.g., \textit{segm}, $ J=7$; \textit{fish}, $ J=9$), PIP-based methods significantly outperform Full MIP in terms of precision metric. For small $J$ and large $p$, IDSA-PIP significantly outperforms other methods for large sample size (e.g., \textit{wave}, $p=22,\ J=3$), while  for small data size (e.g., \textit{vehi}, \textit{wine}), PIP-based methods offer no significant precision advantage over Full MIP.  Figure \ref{fig:multi-performance-comparison}(b) computes percentages of problem instances where the method achieves the highest objective value (out-of-margin accuracy).

\begin{figure}[t]
\centering
\begin{minipage}[t]{0.45\textwidth}
\centering
\includegraphics[width=\textwidth]{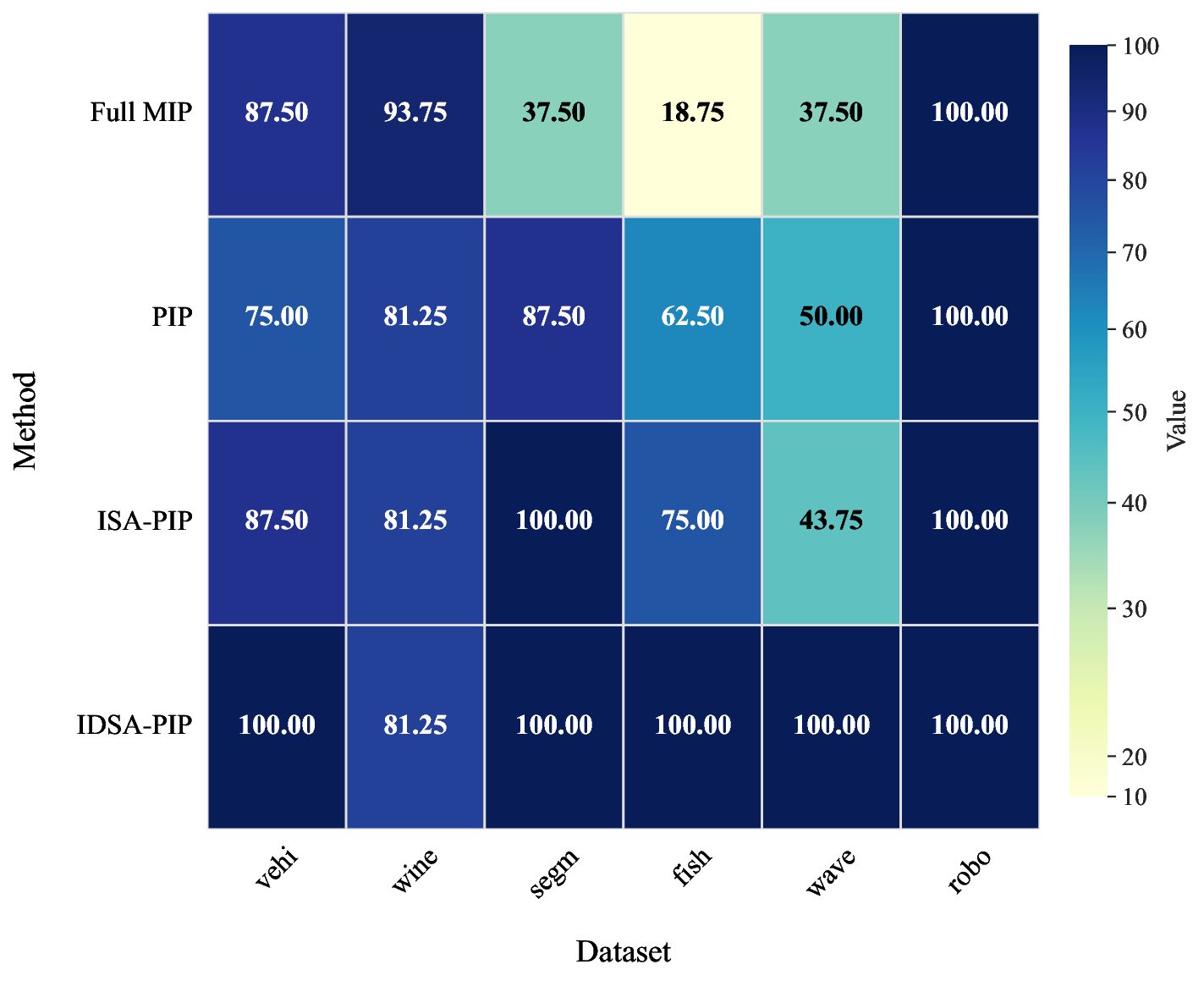}
\\[2pt]
{\small (a) Instance proportion with feasibility guarantee}
\end{minipage}
\hfill
\begin{minipage}[t]{0.45\textwidth}
\centering
\includegraphics[width=\textwidth]{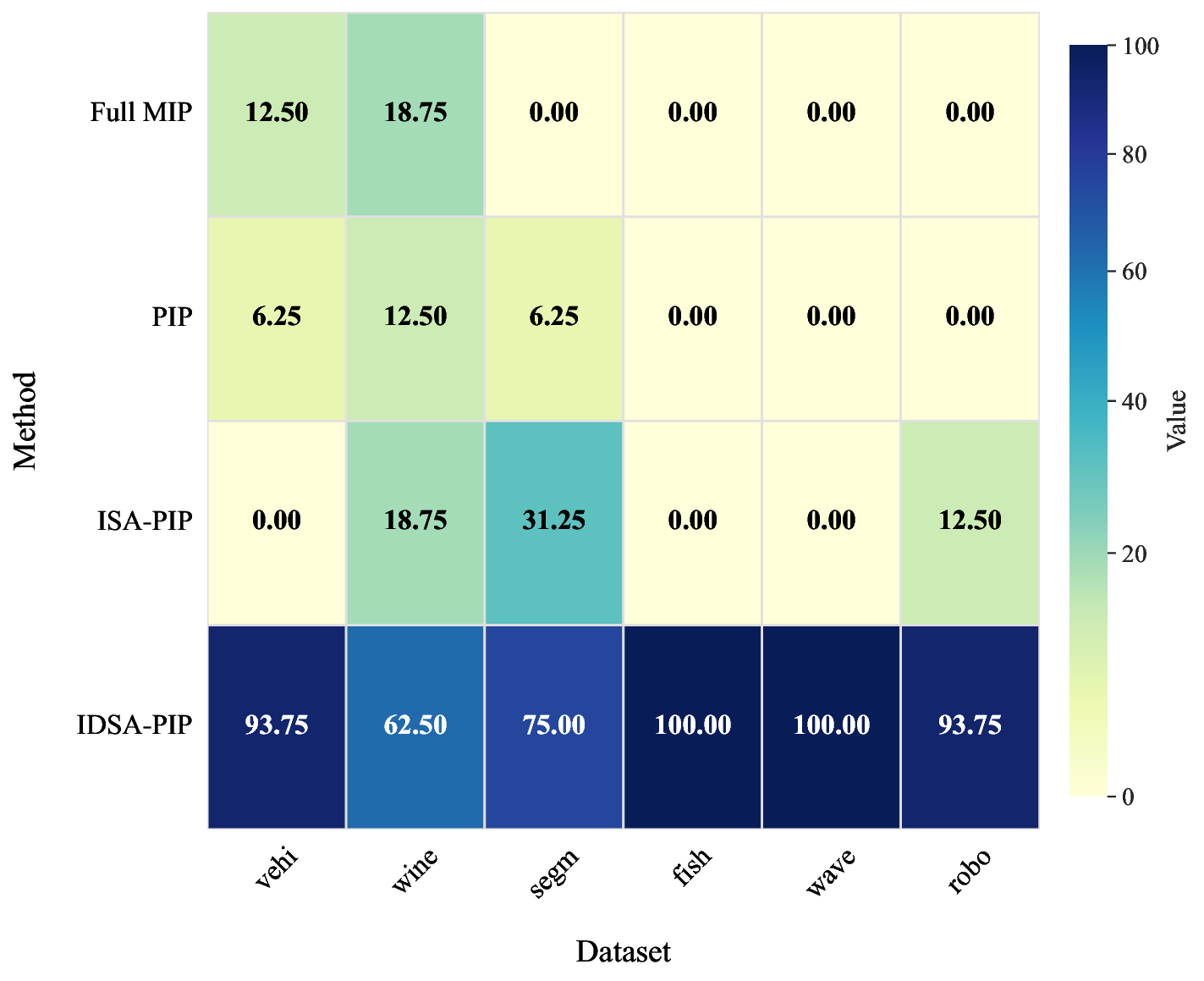}
\\[2pt]
{\small (b) Instance proportion with best objective value}
\end{minipage}
\caption{Performance in terms of feasibility and objective.}
\label{fig:multi-performance-comparison}
\end{figure}

We illustrate the time benefits of PIP-based methods for instances with better objective values than Full MIP method in Figure \ref{fig:multi-time-ratio}. The running time benefit of IDSA-PIP is substantial, with nearly 70\% of instances with 50\% less running time, and nearly 80\% of instances with 40\% less running time among those with better objective values.     Regarding the test accuracy, IDSA-PIP outperforms Full MIP more significantly on larger-scale instances like \textit{segm, fish}, and \textit{wave}.  Table~\ref{tab:multiclass-test-acc} summarizes the instance proportion for PIP-based methods obtaining  better test accuracy. The full MIP approach shows competitive performance on the \textit{robo} dataset, mainly due to the problem’s simplicity, with fewer dimensions and classes. In summary, the decomposition technique (IDSA-PIP) achieves significant time improvement while maintaining the edge on objective value, precision, and test accuracy.  

\begin{figure}[h]
\label{figure:score_classification_time}
\centering
\begin{minipage}[t]{0.45\textwidth}
\centering
\includegraphics[width=\textwidth]{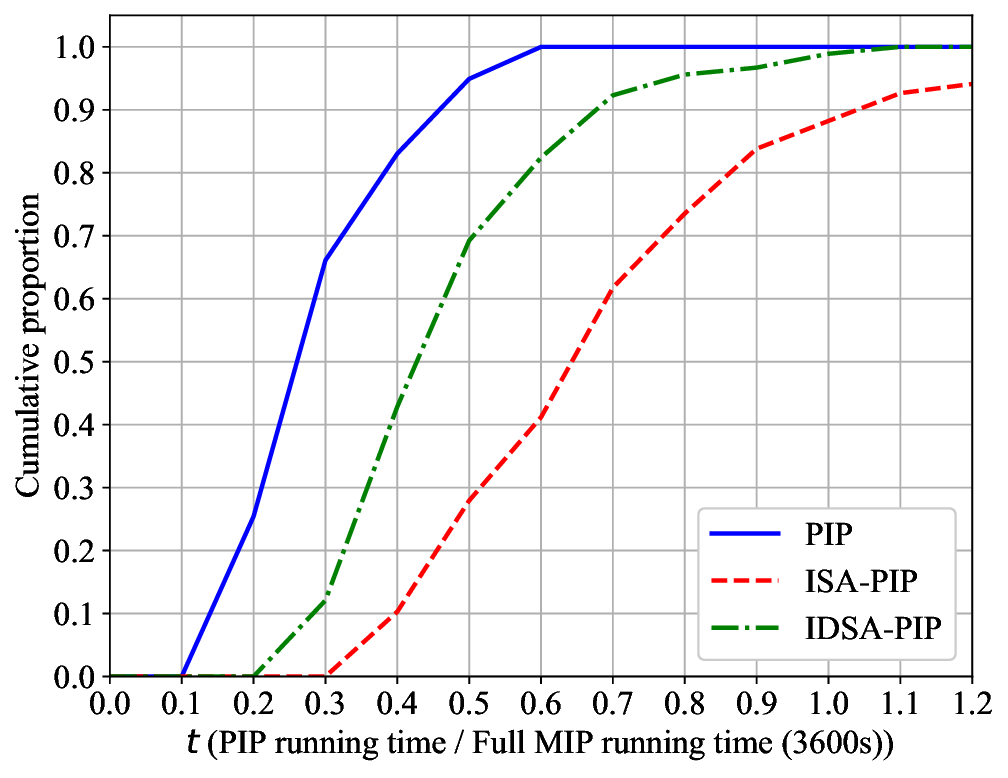}
\\[2pt]
{\small (a) PIP methods versus Full MIP (baseline)}
\end{minipage}
\hfill
\begin{minipage}[t]{0.45\textwidth}
\centering
\includegraphics[width=\textwidth]{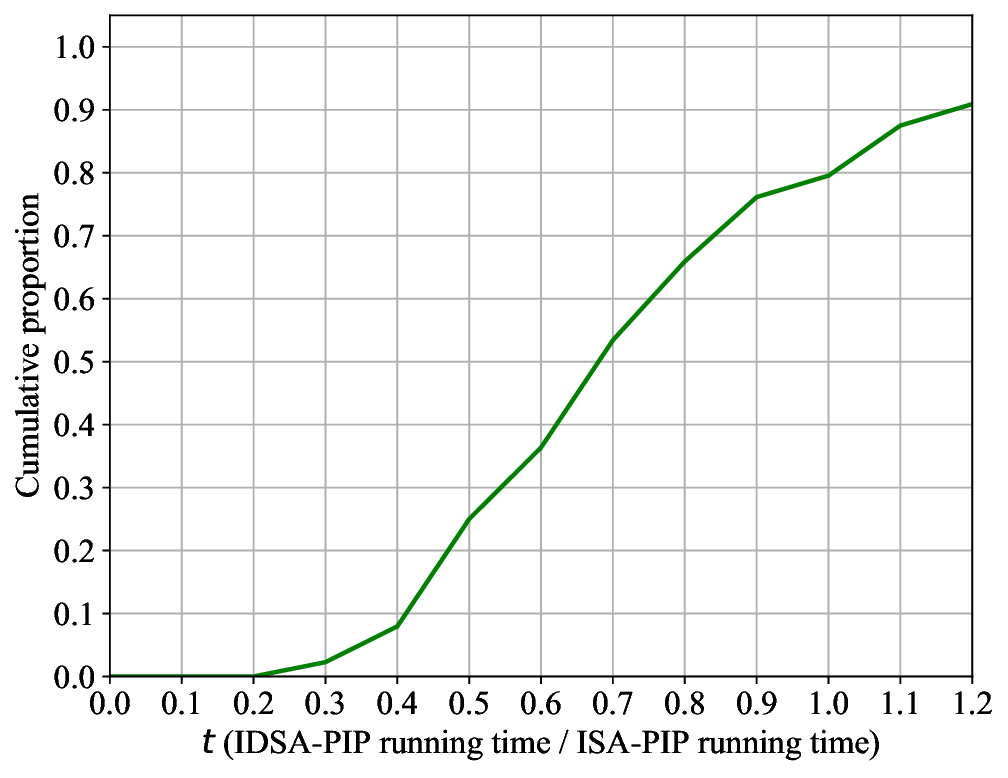}
\\[2pt]
{\small (b) IDSA versus ISA (baseline)}
\end{minipage}
\caption{Cumulative proportion among instances with better objective values versus running time ratio.\\
Note: The \(x\)-axis represents the time ratio (method running time/baseline running time), and the \(y\)-axis represents the cumulative proportion of instances for which the method achieves a better objective value than the baseline within the corresponding time ratio. The baseline method is Full MIP in (a) and ISA-PIP in (b).}
\label{fig:multi-time-ratio}
\end{figure}

\begin{table}[h]
\caption{Instance proportion for obtaining better test accuracy than Full MIP} 
\label{tab:multiclass-test-acc}
\begin{tabular}{@{}lllllll@{}}
\toprule
    & \textit{vehi}    & \textit{wine}    & \textit{segm}    & \textit{fish}    & \textit{wave}    & \textit{robo}    \\ \midrule
PIP      & 31.25\% & 50.00\% & 75.00\% & 62.50\% & 43.75\% & 31.25\% \\
ISA-PIP  & 56.25\% & 62.50\% & 81.25\% & 75.00\% & 31.25\% & 25.00\% \\
IDSA-PIP & 56.25\% & 56.25\% & 87.50\% & 81.25\% & 87.50\% & 12.50\% \\ \bottomrule
\end{tabular}
\footnotetext{Note: A method is considered better than Full MIP if (1) it finds a feasible solution while the baseline does not, or (2) both find feasible solutions and it achieves higher test accuracy. Cases where both methods are infeasible are counted as ties.
}
\end{table}

\subsection{Tree-based classification}
\label{sec:tree based classification}
\subsubsection{Model}
Let $\{(X^s,y_s)\}_{s=1}^N$ be the dataset of size $N$ with $y_s\in [\, J\,]\triangleq \{1,2,\dots,J\}$, $X^s \in \mathbb{R}^p$. With the coefficient matrix $\boldsymbol{a}=\begin{bmatrix}a^1 & a^2 & \cdots & a^{2^D-1}\end{bmatrix}\in \mathbb R^{p\times (2^D-1)}$ and  coefficient vector $\boldsymbol{b}=\begin{bmatrix}b_1 & b_2 & \cdots & b_{2^D-1}\end{bmatrix}^T\in \mathbb{R}^{2^D-1}$, a classification tree with depth $D$ could be constructed as follows. Let ${\cal T}_{\cal B}$ be the set of branching nodes and ${\cal T}_\ell$ be the set of leaf nodes. At each branching node $k\in \mathcal T_{\mathcal B}$, if $(a^k)^\top X^s - b_k \ge 0$, then sample $s$ flows into the right branch, if $(a^k)^\top X^s - b_k < 0$, then sample $s$ flows into the left branch. Each leaf node $t\in \mathcal T_\ell$ is assigned to a class $j_t \in [\, J\,]$. Sample $s$ is classified as class $j$ if it eventually flows into a leaf node $t$ with the assigned class $j_t=j$, and
\[
\begin{array}{l}
(a^k)^\top X^s - b_k \ge 0,\ \forall\, k\in A_R(t), \epc (a^k)^\top X^s -b_k <0,\ \forall\, k\in A_L(t).
\end{array}
\]
 which thus could be modeled via 
 \[
 \mathbf{1}_{[0,\infty)}\left(\min\limits_{k\in A_R(t)}\left\{(a^k)^\top X^s-b_k\right\}\right)\mathbf{1}_{(0,\infty)}\left(\min\limits_{k\in A_L(t)}\left\{-(a^k)^\top X^s +b_k\right\}\right)=1.
 \]
As before, the above product is lower approximated by 
\begin{equation*}
\begin{array}{l}
       \onebld_{[0,\infty)} \left(\underbrace{\min\left\{\min\limits_{k\in A_R(t)}\left\{(a^k)^\top X^s-b_k\right\},\min\limits_{k\in A_L(t)}\left\{-(a^k)^\top X^s +b_k-\varepsilon\right\}\right\}}_{\mbox{represented by $\phi_{st}(\mathbf a, \mathbf b)$}  }\right);
\end{array}
\end{equation*}
and upper approximated by 
\begin{equation*}
\begin{array}{l}
      \onebld_{(-\varepsilon,\infty)} \left(\min\left\{\min\limits_{k\in A_R(t)}\left\{(a^k)^\top X^s-b_k\right\},\min\limits_{k\in A_L(t)}\left\{-(a^k)^\top X^s +b_k-\varepsilon\right\}\right\}\right).
\end{array}
\end{equation*}

In the following, we consider to maximize the out-of-margin accuracy with the precision constraints for a subset of labels ${\cal J}_P\subseteq [\, J\,]$, controlled by a predefined threshold $\beta_j \in (0,1)$. With the above $\varepsilon$-approximation construction, we obtain the following parameter estimation problem:
{\fontsize{8.5pt}{10pt}\selectfont
\begin{equation*}\label{eq:P_tree_classitication}
\begin{array}{l}
 \underset{\substack{(\boldsymbol{a},\boldsymbol{b})\in P\\\{j_t\}\subseteq [J]}}{\textbf{maximize}} 
 \displaystyle{\frac{1}{N}} 
   \underset{t\in \mathcal T_\ell}{\sum} \underbrace{\left(\sum\limits_{s=1}^{N} \mathbf{1}\{y_s=j_t\}\mathbf{1}_{[0,\infty)}\left(\min\left\{\min\limits_{k\in A_R(t)}\left\{(a^k)^\top X^s-b_k-1\right\},\min\limits_{k\in A_L(t)}\left\{-(a^k)^\top X^s +b_k-1\right\}\right\}\right) \right)}_{\mbox{represented by $L_t$}}\\
\textbf{subject to } \mbox{ for all } 
j \in {\cal J}_P: \\[0.05in]
\displaystyle{\sum\limits_{t\in \mathcal T_\ell} }\underbrace{\left(\sum\limits_{s=1}^{N}\mathbf{1}\{y_s=j\}\mathbf{1}\{j_t=j\}\onebld_{[0,\infty)} \left(\min\left\{\min\limits_{k\in A_R(t)}\left\{(a^k)^\top X^s-b_k\right\},\min\limits_{k\in A_L(t)}\left\{-(a^k)^\top X^s +b_k-\varepsilon\right\}\right\}\right) \right)}_{\mbox{represented by $\eta_{jt}$} }\\ [0.2in]
\epc \displaystyle{-\beta_{j}\sum\limits_{t\in \mathcal T_\ell}} \underbrace{\left( \sum\limits_{s=1}^{N}\mathbf{1}\{j_t=j\}\onebld_{(-\varepsilon,\infty)} \left(\min\left\{\min\limits_{k\in A_R(t)}\left\{(a^k)^\top X^s-b_k\right\},\min\limits_{k\in A_L(t)}\left\{-(a^k)^\top X^s +b_k-\varepsilon\right\}\right\}\right)\right)}_{\mbox{represented by $\zeta_{jt}$} } \ge 0 \\ [0.1in]
\displaystyle{\sum\limits_{t\in \mathcal T_{\ell}}\sum\limits_{s=1}^N \onebld\{j_t=j\}\mathbf{1}_{[0,\infty)}\Big(\min\limits_{k\in A_R(t)}\left\{(a^k)^\top X^s-b_k\right\}\Big)\mathbf{1}_{(0,\infty)}\Big(\min\limits_{k\in A_L(t)}\Big\{-(a^k)^\top X^s +b_k\Big\}\Big)\ge 1} 
\end{array}
\end{equation*}
}
where $P\triangleq \{(\bs{a},\bs{b})\in \mathbb R^{(p+1)\times (2^D-1)}:\|a^k\|_1 \le \tau_1, |b_k| \le \tau_1,  \|a^k\|_0 \le \tau_0,~\forall k \in {\cal T_B}\}$, and the second inequality constraint ensures that the denominator in precision is nonzero. 

The IP reformulation is more complicated due to the Heaviside term for the leaf node label assignment $\mathbf{1}\{y_s=j_t\}$ in the objective and $\mathbf{1}\{j_t=j\}$ in the constraints. We use one-hot encoding variables $\{c_{jt}\}_{j\in [J], t\in \mathcal T_{\ell}}$ to represent the lead node labels $\{j_t\}$, then the parameter estimation problem with the above approximation can be reformulated as an integer program using big-M technique as below. 

 \begin{subequations}
    \begin{align}
\underset{\bs{L},\,\bs{\eta},\,\bs{\zeta}\geq 0}{\underset{(\boldsymbol{a},\boldsymbol{b})\in P,\,\bs{c},\,\bs{\xi},\,\bs{z}^\pm,}{\textbf{maximize}}} \quad & \frac{1}{N} \left(\sum\limits_{t\in \mathcal T_\ell}L_t \right) \label{eq:Tree_precision_constraint_epsilon_full_mip-a}\\
    \textbf{subject to} \quad & \left\{\begin{array}{lll}
    (a^k)^\top X^s -b_k -1 \ge -M_{\xi}(1-\xi_{st}), && ~\forall k\in A_R(t),~\forall t\in \mathcal T_\ell, ~\forall s\in [N],\\
     -(a^k)^\top X^s +b_k -1 \ge -M_{\xi}(1-\xi_{st}), && ~\forall k\in A_L(t),~\forall t\in \mathcal T_\ell, ~\forall s\in [N],\\
     L_t \, \leq \, \displaystyle{
     \sum_{s=1}^N
     } \, \mathbf{1}\{j_s=j\}\xi_{st}+M_L(1-c_{jt}), && ~\forall j\in [J],~\forall t\in \mathcal T_\ell\end{array}\right., \label{eq:tree_obj_ip}\\
    & \left\{\begin{array}{lll}\phi_{st}(\mathbf a, \mathbf b) \ge -M_z(1-z^+_{st}), && ~\forall t\in \mathcal T_\ell, ~\forall s\in [N],\\
     \eta_{jt}\le M_{\eta}\cdot c_{jt}, && ~\forall j\in \mathcal J_P,~\forall t\in \mathcal T_\ell,\\
     \eta_{jt} \, \leq \, \displaystyle{ 
     \sum_{s=1}^N
     } \, \onebld\{j_s=j\}z_{st}^+ +M_{\eta}(1-c_{jt}), && ~\forall j\in \mathcal J_P,~\forall t\in \mathcal T_\ell\end{array}\right.,\label{eq:tree_cons_eta_ip}\\
    & \left\{\begin{array}{lll}
     -\phi_{st}(\mathbf a, \mathbf b)-\varepsilon\ge -M_z z_{st}^-, && ~\forall t\in \mathcal T_\ell, ~\forall s\in [N],\\
     \zeta_{jt}\ge -M_{\zeta}\cdot c_{jt}, && ~\forall j\in  \mathcal J_P,~\forall t\in \mathcal T_\ell,\\
     \zeta_{jt} \, \ge \, \displaystyle{
     \sum_{s=1}^N  
     } \, z_{st}^- -M_{\zeta}(1-c_{jt}), && ~\forall 
     j\in  \mathcal J_P,~\forall t\in \mathcal T_\ell
     \end{array}\right.,\label{eq:tree_cons_zeta_ip} \\
     & \displaystyle{
     \sum_{t\in \mathcal T_\ell} 
     } \, \eta_{jt} - \beta_{pj} \displaystyle{
     \sum_{t\in \mathcal T_\ell}
     } \, \zeta_{jt} \ge 0, \qquad  ~\forall j\in \mathcal J_P,\\
    & \displaystyle{
    \sum_{t\in \mathcal T_\ell}
    } \, \eta_{jt}  \ge 1, \qquad  ~\forall j\in \mathcal J_P,\label{eq:Tree_precision_constraint_epsilon_full_mip-30m}\\
      & \displaystyle{
      \sum_{j\in [J]}
      } \, c_{jt}=1, \qquad \forall t\in \mathcal T_{\ell},\label{eq:Tree_precision_constraint_epsilon_full_mip-e}\\
    & \xi_{st},~c_{jt},~z_{st}^+,~z_{st}^-\in \{0,1\}, \qquad  ~\forall s\in [N],~\forall t\in \mathcal T_\ell, ~\forall j\in [J],~\forall \ell\in [D].
    \label{eq:Tree_precision_constraint_epsilon_full_mip-u}
    \end{align}
    \end{subequations}
 In the above program, \eqref{eq:tree_obj_ip} models the objective,  \eqref{eq:tree_cons_eta_ip} and \eqref{eq:tree_cons_zeta_ip} model the first and the second term in the precision constraint respectively. We keep the 
 pointwise mininum function $\phi_{st}(\mathbf a, \mathbf b)$ in the above program, which can be either described by $(D-1)N \, 2^D$ binary variables for piecewise selection following \cite{BertsimasDunn17}, or further decomposed via PA decomposition in the IDSA algorithms.   The number of integers in the above MIP is: $(3N+J) \, 2^D$, including $N \, 2^D$ for Heaviside functions in the objective, $2N  \,2^D$ for Heaviside functions in precision constraint(s), and $J \, 2^D$ for class assignment to leaf nodes.
 
\subsubsection{Numerical results}
\label{sec:tree based results}
For the tree classification problem, we implement the Full MIP and IDSA-PIP methods on 8 datasets, each of which consists of 4 random folds. The details of data information, parameter setting and method description are provided in Appendix \ref{app:settings tree}.  Let $R_{\text{obj}}$ represent the ratio of objective value produced by IDSA-PIP and Full MIP, and if Full MIP fails to find a feasible solution, set $R_{\text{obj}}=\infty$. Let $R_{\text{time}}$ represent the ratio of the running time reported by IDSA-PIP and Full MIP. 

The advantage of IDSA-PIP over Full MIP in terms of objective values becomes more pronounced when the tree depth increases. We find that the IDSA-PIP method exhibits better performance with large datasets for depth-2 decision tree model. Specifically, for depth-2 decision trees, on  datasets with small size (\textit{wine} and \textit{nwth}) $R_{\text{obj}}>,=,<1$ on $12.5\%$, $75\%$ and $12.5\%$ instances respectively. On medium-sized datasets (\textit{htds}, \textit{dmtl} and \textit{blsc}), $R_{\text{obj}} >,= ,< 1$ on $75.0\%$, $8.3\%$, and $16.7\%$  instances, respectively. On large datasets (\textit{ctmc}, \textit{ceva} and \textit{fish}) $R_{\text{obj}}>,=1$ on $91.7\%$ and $8.3\%$ instances respectively. On the dataset \textit{fish} which is the largest one among the 8 datasets, Full MIP for decision trees with all depths fails to find a feasible solution on all instances, while IDSA-PIP always produces feasible solutions. The complete results for all datasets are presented in Table~\ref{tab:wine} to \ref{tab:fish} in Appendix \ref{app:settings tree}. The advantage of IDSA-PIP is not limited to the training set, it also carries over to the test set. Specifically, IDSA-PIP achieves the higher test accuracy than that of Full MIP on $74.0\%$ of the instances, and equal or lower test accuracy on $15.6\%$ and $10.4\%$ of the instances, respectively, indicating a clear overall advantage in generalization performance.

\begin{table}[h]
\caption{Summary of Full MIP and IDSA-PIP performance on Tree with depth 2, 3 and 4.}\label{tab:summary_obj_tree}
\begin{tabular}{crrrrrrc}
\toprule
\multirow{2}{*}{Tree Depth} & \multicolumn{6}{c}{$R_{\text{obj}}$}& \multirow{2}{*}{Full MIP infeasible}\\ 
\cmidrule{2-7}
&(0, 1) & =1 & (1, 1.2] & (1.2, 1.5] & (1.5, 2] & (2, $\infty$) & \\
\midrule
2     & 9.4\% & 25.0\% & 25.0\% & 21.9\% & 6.3\% & 0.0\% & 12.5\% \\
3     & 3.1\% & 15.6\% & 28.1\% & 25.0\% & 0.0\% & 9.4\% & 18.8\% \\
4     & 3.1\% & 21.9\% & 28.1\% & 12.5\% & 0.0\% & 12.5\% & 21.9\% \\
\bottomrule
\end{tabular}
\footnotetext{Note: The last column represents that Full MIP fails to find a feasible solution while IDSA-PIP finds feasible solutions}
\end{table}

Figure \ref{fig:cumulative-instance-ratio-tree} illustrates that IDSA-PIP method only takes less than one-half time than Full MIP in over 90\% instances with no worse objective value  for depth-2/3 decision trees, and over 80\% instances for depth-4 decision tree. 

\begin{figure}[h]
\centering
\includegraphics[width=0.4\textwidth]{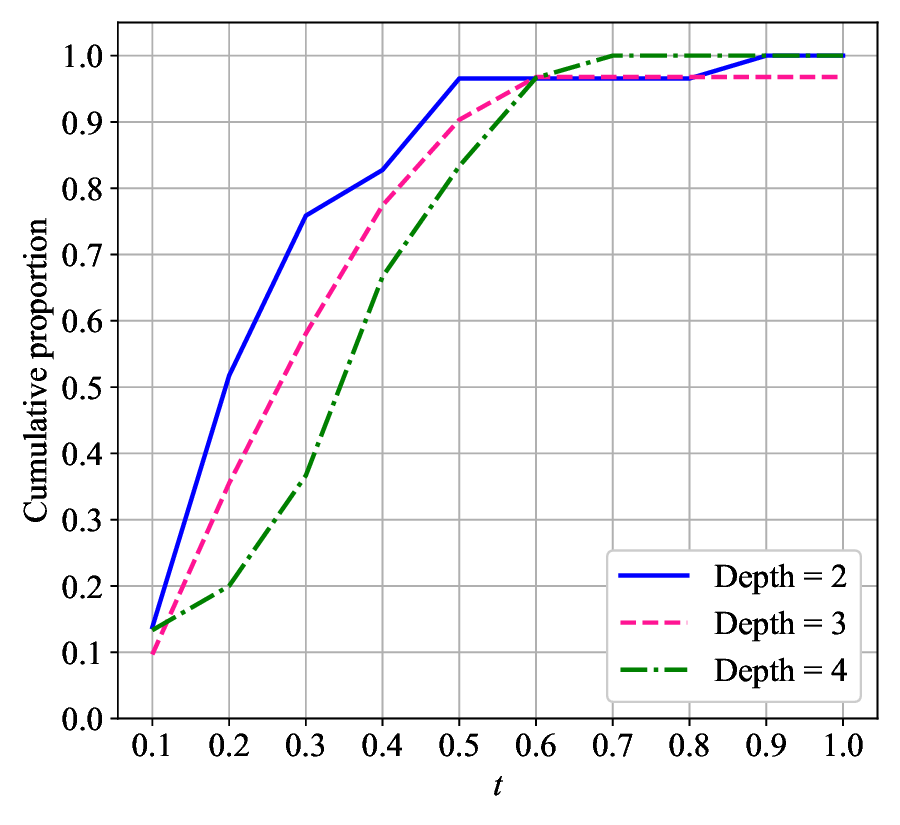}
\caption{Cumulative proportion among instances with $R_{\text{obj}} >=1$.
\\
Note: Each curve in the figure above is for decision trees with a specific depth $\in \{2,3,4\}$. The $y$-axis represents the cumulative proportion among instances with $R_{\text{obj}} >=1$, and $x$-axis represents $R_{\text{time}}$, the ratio of running time by IDSA-PIP and Full MIP.}
\label{fig:cumulative-instance-ratio-tree}
\end{figure}
 
Admitting that the IDSA-PIP method takes longer time compared with classification tree models based on top-down recursive partitioning such as \texttt{CART} in \cite{BreimanFOStone84}, we show that the tree classification model by IDSA-PIP method Pareto-dominates several competitive models including \texttt{CART}, \texttt{BinOCT} in \cite{verwer2019learning} and \texttt{BendersOCT}, \texttt{FlowOCT} in \cite{AghaeiGomezVayanos21} through the accuracy-precision plots and Pareto curves.  Figure \ref{fig:pareto-curve-blsc-depth4} plots the accuracy-precision curves for depth-4 decision trees on dataset \textit{blsc} and we defer the plots for other depths in Figure~\ref{fig:pareto-curve-blsc-depth2}-\ref{fig:pareto-curve-blsc-depth3} and for another dataset \textit{ctmc} in Figure~\ref{fig:pareto-curve-ctmc-depth2}- \ref{fig:pareto-curve-ctmc-depth4} in Appendix \ref{app:settings tree}.  In Figure~\ref{fig:pareto-curve-blsc-depth4}, for models without precision constraint (\texttt{U-PIP}, \texttt{BendersOCT}, \texttt{U-BinOCT} and \texttt{CART}), we plot one single point, and for each model with precision constraint (\texttt{C-PIP}, \texttt{FlowOCT} and \texttt{C-BinOCT}), we set a series of precision thresholds and draw Pareto curves of all non-dominated points for the training dataset and the test dataset respectively. \texttt{C-PIP} achieves a superior Pareto frontier dominating the rest both on the training set and the test set in all 4 Folds, which is possibly because other models such as  \texttt{BendersOCT}, \texttt{FlowOCT} and \texttt{C-BinOCT}) are restricted to univariate splits or discrete feature values in learning the classification trees.

\begin{figure}[h]
\centering
\includegraphics[width=\textwidth]{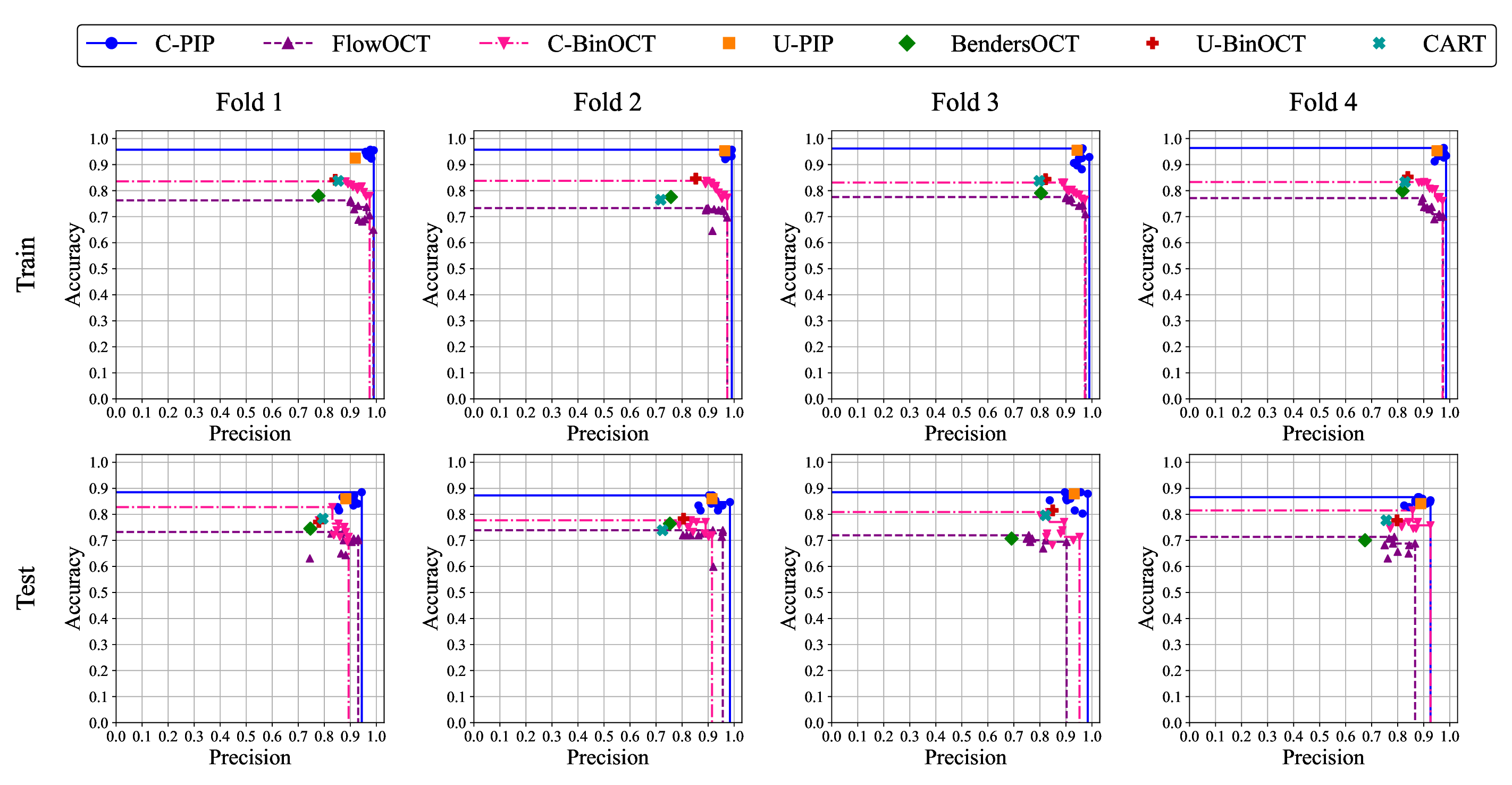}
\caption{Accuracy-precision plots and pareto curve, \textit{blsc} dataset, depth-4. }
\label{fig:pareto-curve-blsc-depth4}
\end{figure}

\begin{table}[h]
\caption{Summary result of decision tree models on \textit{blsc} dataset.} 
\label{tab:summary blsc}
\begin{tabular}{llllllllll}
\toprule
 & \multicolumn{3}{c}{Depth 2}  & \multicolumn{3}{c}{Depth 3}  & \multicolumn{3}{c}{Depth 4} \\
\cmidrule(r){2-4} \cmidrule(lr){5-7} \cmidrule(l){8-10}
& \multicolumn{1}{c}{Train\_acc} &
\multicolumn{1}{c}{Test\_acc} &
\multicolumn{1}{c}{Time(s)} &
\multicolumn{1}{c}{Train\_acc} &
\multicolumn{1}{c}{Test\_acc} &
\multicolumn{1}{c}{Time(s)} &
\multicolumn{1}{c}{Train\_acc} &
\multicolumn{1}{c}{Test\_acc} &
\multicolumn{1}{c}{Time(s)} \\
\midrule
C-PIP      & \textbf{0.916}   & \textbf{0.886}   & 1125    & \textbf{0.907}   & \textbf{0.865}   & 1585    & \textbf{0.937}   & \textbf{0.850}   & 1717    \\
FlowOCT    & 0.531   & 0.512   & 454.6   & 0.657   & 0.645   & 3348    & 0.728   & 0.695   & 3600    \\
C-BinOCT   & 0.564   & 0.548   & 3.111   & 0.715   & 0.672   & 3600    & 0.804   & 0.747   & 3600    \\
\midrule
U-PIP      & \textbf{0.898}   & \textbf{0.870}   & 310.8   & \textbf{0.928}   & \textbf{0.854}   & 402.6   & \textbf{0.947}   & \textbf{0.860}   & 454.6   \\
BendersOCT & 0.686   & 0.667   & 7.073   & 0.742   & 0.688   & 215.1   & 0.786   & 0.729   & 3600    \\
U-BinOCT   & 0.730   & 0.677   & 2.145   & 0.783   & 0.717   & 3600    & 0.846   & 0.787   & 3600    \\
CART       & 0.668   & 0.642   & 0.121   & 0.757   & 0.694   & 0.122   & 0.818   & 0.774   & 0.126  \\
\botrule
\end{tabular}
\footnotetext{Note: For constrained models, the \textbf{Train\_acc, Test\_acc, Time(s)} refer to the \textbf{average} value of all folds and all precision thresholds}
\end{table}

\begin{table}[h]
\caption{Summary result of decision tree models  on \textit{ctmc} dataset.} \label{tab:summary ctmc}
\begin{tabular}{llllllllll}
\toprule
& \multicolumn{3}{c}{Depth 2}  & \multicolumn{3}{c}{Depth 3}  & \multicolumn{3}{c}{Depth 4} \\
\cmidrule(r){2-4} \cmidrule(lr){5-7} \cmidrule(l){8-10}
 & \multicolumn{1}{c}{Train\_acc} &
  \multicolumn{1}{c}{Test\_acc} &
  \multicolumn{1}{c}{Time(s)} &
  \multicolumn{1}{c}{Train\_acc} &
  \multicolumn{1}{c}{Test\_acc} &
  \multicolumn{1}{c}{Time(s)} &
  \multicolumn{1}{c}{Train\_acc} &
  \multicolumn{1}{c}{Test\_acc} &
  \multicolumn{1}{c}{Time(s)} \\
\midrule
C-PIP      & 0.534   & 0.489   & 1123    & \textbf{0.576}   & \textbf{0.524}   & 1274    & \textbf{0.593}   & 0.526   & 1416    \\
FlowOCT    & 0.504   & 0.488   & 572.4   & 0.497   & 0.465   & 3600    & 0.485   & 0.454   & 3601    \\
C-BinOCT   & \textbf{0.547}   & \textbf{0.515}   & 137.4   & 0.566   & 0.522   & 3601    & 0.576   & \textbf{0.527}   & 3601    \\
\midrule
U-PIP      & 0.535   & 0.475   & 370.8   & \textbf{0.598}   & \textbf{0.531}   & 467.8   & \textbf{0.624}   & 0.540   & 542.3   \\
BendersOCT & 0.515   & 0.497   & 34.41   & 0.547   & 0.505   & 3600   & 0.563   & 0.508   & 3600    \\
U-BinOCT   & \textbf{0.554}   & \textbf{0.518}   & 107.0   & 0.577   & 0.527   & 3600    & 0.583   & 0.513   & 3601    \\
CART       & 0.488   & 0.464   & 0.133   & 0.541   & 0.516   & 0.134   & 0.580   & \textbf{0.549}   & 0.148  \\
\botrule
\end{tabular}
\footnotetext{Note: For constrained models, the columns \textbf{Train\_acc, Test\_acc, Time(s)} refer to the \textbf{average} value of all folds and all precision thresholds}
\end{table}

\begin{section}{Summary}
In this paper, we have considered computational methods for solving
the optimization problem defined by 
mixed-signed affine combinations of Heaviside composite functions.  The lack
of matching semicontinuity of the objective function and closedness of the 
constraint set is remedied by an 
$\varepsilon$-approximation of the Heaviside function; each Heaviside term
in the problem
is substituted by a binary variable and the piecewise structure of
the inner functions in
the composition is exploited in the design of an
iterative decomposed shrinkage algorithm for solving the problem.  
Two versions of the algorithm are considered:
one for the full-integer formulation and the other for its partial-integer 
enhancement.  Convergence results of the algorithms are established 
and extensive computational experiments are performed on two
classes of multiclass classification problems. Numerical results support
the effectiveness of the algorithms for solving these problems, further
confirming the promise of the progressive integer programming approach
for solving many optimization problems with continuous variables mixed with
discrete structures.
    
\end{section}

\begin{paragraph}{\rm \textbf{Funding}}

Junyi Liu is supported by National Natural Science Foundation of China under grant 72571155. Jong-Shi Pang is supported by U.S. Air Force Office of Scientific Research under grant FA9550-22-1-0045.

\end{paragraph}

\begin{paragraph}{\rm \textbf{Data and Code Availability}}

The data for this study consists of publicly available and computationally generated
instances. URLs of publicly accessible data sources are cited within the article. The source code for generating the
other instances  and numerical experiments is available in the GitHub repository: \url{https://github.com/zhengke1111/HSCOP_classification}.

\end{paragraph}

\begin{appendices}



\section{Numerical experiment setting} 
\subsection{Score-based multiclass classification}
\label{app:multi-setting}
Dataset information is listed in Table \ref{tab:multi-datasets}. Each dataset is stratified into 4 folds, yielding 4 distinct training sets (each consisting of 3 folds, with the remaining fold as the test set). For each training set, we define 4 precision constraint configurations, generating 4 problem instances per training set.
Training and test sets are standardized: the training set is scaled to mean 0 and variance 1, with the same transformation applied to the test set.

\begin{table}[h]
\caption{Datasets for score-based multiclass classification} \label{tab:multi-datasets}
\begin{tabular}{llccclll}
\toprule
ID & source & $N$ & $p$ & $J$ & \# of each class & \multicolumn{2}{c}{precision setting} \\
\midrule
\multirow{2}{*}{\textit{vehi}} & \multirow{2}{*}{{\href{https://archive.ics.uci.edu/dataset/149/statlog+vehicle+silhouettes}{uci}}} & \multirow{2}{*}{846} & \multirow{2}{*}{18} & \multirow{2}{*}{4} & \multirow{2}{*}{212/217/218/200} 
& $\beta_0 = 0.62, \beta_3 = 0.80$ & $\beta_0 = 0.67, \beta_3 = 0.80$ \\
& & & & & & $\beta_0 = 0.62, \beta_1 = 0.80, \beta_3 = 0.80$ & $\beta_0 = 0.67, \beta_1 = 0.80, \beta_3 = 0.80$ \\
\midrule
\multirow{2}{*}{\textit{wine}} & \multirow{2}{*}{\href{https://archive.ics.uci.edu/dataset/186/wine+quality}{uci}} & \multirow{2}{*}{1599} & \multirow{2}{*}{11} & \multirow{2}{*}{3} & \multirow{2}{*}{63/1319/217} 
& $\beta_1 = 0.85$ & $\beta_1 = 0.90$ \\
& & & & & & $\beta_1 = 0.85, \beta_2 = 0.65$ & $\beta_1 = 0.90, \beta_2 = 0.65$ \\
\midrule
\multirow{2}{*}{\textit{segm}} & \multirow{2}{*}{\href{https://archive.ics.uci.edu/dataset/50/image+segmentation}{uci}} & \multirow{2}{*}{2310} & \multirow{2}{*}{19} & \multirow{2}{*}{7} & \multirow{2}{*}{\makecell[l]{1533/1508/1358/\\707/703/626}} 
& $\beta_2 = 0.80$ & $\beta_2 = 0.85$ \\
& & & & & & $\beta_2 = 0.80, \beta_4 = 0.75$ & $\beta_2 = 0.85, \beta_4 = 0.75$ \\
\midrule
\multirow{2}{*}{\textit{fish}} & \multirow{2}{*}{\href{https://www.kaggle.com/datasets/taweilo/fish-species-sampling-weight-and-height-data?resource=download}{kaggle}} & \multirow{2}{*}{4080} & \multirow{2}{*}{3} & \multirow{2}{*}{9} & \multirow{2}{*}{\makecell[l]{480/476/475/468/458/\\455/435/418/415}} 
& $\beta_0 = 0.55, \beta_3 = 0.4$ & $\beta_0 = 0.60, \beta_3 = 0.40$ \\
& & & & & & $\beta_0 = 0.55, \beta_1 = 0.99, \beta_3 = 0.40$ & $\beta_0 = 0.60, \beta_1 = 0.99, \beta_3 = 0.40$ \\
\midrule
\multirow{2}{*}{\textit{wave}} & \multirow{2}{*}{\href{https://archive.ics.uci.edu/dataset/107/waveform+database+generator+version+1}{uci}} & \multirow{2}{*}{5000} & \multirow{2}{*}{22} & \multirow{2}{*}{3} & \multirow{2}{*}{1696/1657/1647} 
& $\beta_0 = 0.89$ & $\beta_0 = 0.91$ \\
& & & & & & $\beta_0 = 0.89, \beta_1 = 0.80$ & $\beta_0 = 0.91, \beta_1 = 0.80$ \\
\midrule
\multirow{2}{*}{\textit{robo}} & \multirow{2}{*}{\href{https://archive.ics.uci.edu/dataset/194/wall+following+robot+navigation+data}{uci}} & \multirow{2}{*}{5456} & \multirow{2}{*}{2} & \multirow{2}{*}{4} & \multirow{2}{*}{2205/2097/826/328} 
& $\beta_2 = 0.81$ & $\beta_2 = 0.86$ \\
& & & & & & $\beta_2 = 0.81, \beta_1 = 0.90$ & $\beta_2 = 0.86, \beta_1 = 0.90$ \\
\botrule
\end{tabular}
\end{table}

The numerical results on \textit{segm} and \textit{wave} datasets are presented in Table~\ref{tab:segm-1}-\ref{tab:wave-4}. Since {\sc Gurobi} solver fails to find an optimal solution for all instances within the 3600-second time limit, we record the first time that the best solution is obtained within the time limit in Table~\ref{tab:segm-1}- \ref{tab:wave-4}. Additional experimental parameters are provided below:
\begin{itemize}
    \item \textbf{Initial solution}: To generate a high-quality initial solution for the MIP solver, we first train an $\ell_2$-penalized SVM with hinge loss on the training set, obtaining $(W', \mathbf{b}')$ and make the scaling to ensure that $\displaystyle{
    \max_{j\in[J]}
    } \, \{\|w_j'\|_1, |b_j'|\} \leq \tau$. With this adjusted solution as a warm start, we solve the unconstrained Full MIP problem with a 120-second time limit, and the produced solution serves as the initial solution for all methods in subsequent experiments.
    \item \textbf{Model parameters}: The precision thresholds are provided in Table \ref{tab:multi-datasets}. The recall thresholds $\{\alpha_j\}_{j\in\mathcal{J}}$ are uniformly set to $0.1$ on the same classes as precision constraints to prevent empty class assignment.
    We set the penalty parameter $\rho=10^4$ for infeasible solutions, and    $M=20\max_{s\in[N]} \|X^s\|_\infty + 120$ for the big-M formulation, $\tau=10$ uniformly across all experiments, and use $\varepsilon=10^{-5}$ for the problem solved by PIP.
    \item \textbf{Algorithm parameters}: The hyperparameters of the PIP algorithm are set as follows: the initial integer ratio $r_0 = 0.4$, maximum integer ratio $r_{\text{max}} = 0.75$, minimum integer ratio $r_{\text{min}} = 0.3$, and change ratio $r_\Delta = 0.1$. For iterative control, we set the maximum PIP iteration $\mu_{\text{max}} = 10$, maximum unchanged iteration $\tilde{\mu}_{\text{max}} = 4$. For IDSA/ISA-PIP, the maximum iteration $\nu_{\text{max}} = 3$ and $\varepsilon_{\nu}=10^{-2-\nu}$.
    \item \textbf{Time limit}: We set a time limit of 3600 seconds for the {\sc Gurobi} solver when solving the Full MIP problem. For solving PIP subproblems, we use a time limit of 540 seconds for ISA-PIP and PIP, and 360 seconds for IDSA-PIP. Additionally, we adopt an early stopping criterion for the subproblems: if no improvement in the objective function is observed over $10\%$ of the allocated time limit, the solving process is terminated.
\end{itemize}

\begin{table}[h]
\caption{Results of score-based classification  on \textit{segm} dataset with a single precision constraint} 
\label{tab:segm-1}
\begin{tabular}{cclrrrrrr}
\toprule
 $\beta_2$ &    {Fold} & {Method} & {Obj} & {Time(s)} & {Train\_acc} & {Test\_acc} & {Train\_prec} & {Test\_prec} \\
\midrule
\multirow{16}{*}{0.80} & \multirow{4}{*}{1} & Full MIP & 0.888          & 2604 & \textbf{0.948} & \textbf{0.957} & 0.863 &   0.848  \\
& & PIP      & \textbf{0.913} & 1297 & 0.935          & 0.941          & 0.853 &  0.835   \\
& & ISA-PIP  & 0.909          & 3293 & 0.935          & 0.939          & 0.846 & 0.826   \\
& & IDSA-PIP & 0.910          & 796  & 0.936          & 0.941          & 0.856 & 0.843 \\ 
\cmidrule{2-9}
& \multirow{4}{*}{2} & Full MIP & 0.900          & 3600 & 0.930          & 0.939          & 0.833 & 0.874 \\
& & PIP      & 0.911          & 965  & \textbf{0.938} & \textbf{0.941} & 0.813 & 0.852\\
 && ISA-PIP  & 0.916          & 3143 & \textbf{0.938} & 0.931          & 0.807 &  0.815 \\
&& IDSA-PIP & \textbf{0.917} & 1288 & 0.936          & 0.934          & 0.807 &  0.813\\
\cmidrule{2-9}
&\multirow{4}{*}{3} & Full MIP & 0.915          & 3555 & 0.937          & 0.920          & 0.813 & 0.750 \\
&& PIP      & 0.920          & 982  & 0.941          & 0.915          & 0.801 &  0.745 \\
&& ISA-PIP  & \textbf{0.922} & 2553 & 0.942          & 0.915          & 0.836 & 0.756  \\
&& IDSA-PIP & 0.920          & 1053 & \textbf{0.945} & \textbf{0.922} & 0.868 &  0.791 \\
\cmidrule{2-9}
&\multirow{4}{*}{4} & Full MIP & 0.908          & 2717 & 0.931          & 0.906          & 0.805 & 0.760  \\
&& PIP      & 0.923          & 858  & 0.938          & 0.913          & 0.811 & 0.791  \\
&& ISA-PIP  & \textbf{0.931} & 3946 & \textbf{0.941} & \textbf{0.927} & 0.856 &  0.854 \\
&& IDSA-PIP & 0.928          & 1162 & 0.936          & 0.917          & 0.835 & 0.845  \\
\midrule \midrule
\multirow{16}{*}{0.85} & \multirow{4}{*}{1} & Full MIP & infeas.        & 3205 & 0.933          & 0.936          & 0.824 &  0.809   \\
&& PIP      & 0.909          & 887  & \textbf{0.939} & 0.943          & 0.866 & 0.837 \\
&& ISA-PIP  & 0.908          & 2683 & \textbf{0.939} & \textbf{0.945} & 0.863  & 0.856\\
&& IDSA-PIP & \textbf{0.914} & 1228 & 0.935          & 0.939 & 0.857 &  0.843 \\
\cmidrule{2-9}
&\multirow{4}{*}{2} & Full MIP & infeas.        & 3119 & 0.931          & 0.933          & 0.791 &  0.826  \\
& & PIP      & 0.906          & 2076 & \textbf{0.940} & \textbf{0.943} & 0.855   & 0.904  \\
&& ISA-PIP  & 0.913          & 4964 & 0.939          & 0.941          & 0.855 & 0.874\\
 && IDSA-PIP & \textbf{0.920}  & 1715 & 0.934          & 0.934          & 0.851 &  0.874  \\
\cmidrule{2-9}
&\multirow{4}{*}{3} & Full MIP & infeas.        & 2732 & 0.935          & 0.920          & 0.794 &  0.735 \\
&& PIP      & 0.917          & 1346 & \textbf{0.947} & 0.920          & 0.862  & 0.780 \\
&& ISA-PIP  & 0.919          & 3319 & \textbf{0.947} & \textbf{0.924} & 0.856   & 0.789 \\
&& IDSA-PIP & \textbf{0.926} & 1339 & 0.939          & 0.915          & 0.881 &  0.812\\
\cmidrule{2-9}
&\multirow{4}{*}{4} & Full MIP & 0.887          & 3600 & 0.931          & 0.910          & 0.856 &  0.816 \\
&& PIP      & 0.930           & 1766 & \textbf{0.942} & 0.922          & 0.857  & 0.855  \\
&& ISA-PIP  & \textbf{0.932} & 4554 & 0.941          & \textbf{0.925} & 0.853 & 0.854 \\
&& IDSA-PIP & \textbf{0.932} & 1852 & 0.941          & 0.924          & 0.870 &  0.886 \\
\bottomrule
\end{tabular}
\end{table}

\begin{table}[h]
\caption{Results of score-based classification  on \textit{segm} dataset  with two precision constraints, $\beta_2=0.80,\,\beta_4=0.75$} 
\label{tab:segm-3}
\begin{tabular}{clrrrrrrrrrr}
\toprule
     \multirow{2}{*}{Fold} & \multirow{2}{*}{Method} & \multicolumn{1}{l}{\multirow{2}{*}{Obj}} & \multicolumn{1}{l}{\multirow{2}{*}{Time(s)}} & \multicolumn{1}{l}{\multirow{2}{*}{Train\_acc}} & \multicolumn{1}{l}{\multirow{2}{*}{Test\_acc}} & \multicolumn{3}{c}{Train\_prec} & \multicolumn{3}{c}{Test\_prec} \\
\cmidrule{7-12}   &  &  &  &  &  & 
  \multicolumn{1}{l}{label $2$} &
  \multicolumn{2}{l}{label $4$} &
  \multicolumn{1}{l}{label $2$} &
  \multicolumn{2}{l}{label $4$} \\
\midrule
\multirow{4}{*}{1} & Full MIP & 0.903          & 3005 & 0.930          & 0.938          & 0.800 & 0.815 &  & 0.802 & 0.903 &  \\
                       & PIP      & 0.893          & 805  & \textbf{0.943} & \textbf{0.953} & 0.866 & 0.798 &  & 0.854 & 0.872 &  \\
                       & ISA-PIP  & 0.910          & 1850 & 0.937          & 0.941          & 0.856 & 0.805 &  & 0.843 & 0.882 &  \\
                       & IDSA-PIP & \textbf{0.913} & 1136 & 0.938          & 0.939          & 0.860 & 0.809 &  & 0.833 & 0.881 &  \\
\midrule
\multirow{4}{*}{2} & Full MIP & infeas.        & 2773 & 0.930          & 0.933          & 0.785 & 0.810 &  & 0.826 & 0.786 &  \\
                       & PIP      & 0.912          & 788  & \textbf{0.939} & \textbf{0.941} & 0.815 & 0.812 &  & 0.837 & 0.850 &  \\
                       & ISA-PIP  & \textbf{0.917} & 2062 & 0.938          & 0.931          & 0.805 & 0.817 &  & 0.813 & 0.805 &  \\
                       & IDSA-PIP & \textbf{0.917} & 1084 & 0.936          & 0.931          & 0.807 & 0.793 &  & 0.847 & 0.775 &  \\
\midrule
\multirow{4}{*}{3} & Full MIP & infeas.        & 2360 & 0.932          & 0.917          & 0.769 & 0.863 &  & 0.730 & 0.797 &  \\
                       & PIP      & 0.919          & 1118 & 0.941          & \textbf{0.919} & 0.801 & 0.849 &  & 0.750 & 0.771 &  \\
                       & ISA-PIP  & 0.916          & 1958 & 0.942          & 0.915          & 0.827 & 0.843 &  & 0.761 & 0.737 &  \\
                       & IDSA-PIP & \textbf{0.922} & 1248 & \textbf{0.943} & 0.915          & 0.817 & 0.843 &  & 0.747 & 0.737 &  \\
\midrule
\multirow{4}{*}{4} & Full MIP & infeas.        & 3336 & 0.930          & 0.908          & 0.776 & 0.831 &  & 0.723 & 0.809 &  \\
                       & PIP      & 0.922          & 485  & 0.937          & 0.915          & 0.811 & 0.826 &  & 0.802 & 0.741 &  \\
                       & ISA-PIP  & 0.924          & 1428 & 0.941          & 0.920          & 0.861 & 0.826 &  & 0.866 & 0.780 &  \\
                       & IDSA-PIP & \textbf{0.931} & 1582 & \textbf{0.942} & \textbf{0.927} & 0.867 & 0.795 &  & 0.854 & 0.756 &  \\
\bottomrule
\end{tabular}
\end{table}

\begin{table}[h]
\caption{Results of score-based classification on \textit{segm} dataset with  two precision constraints, $\beta_2=0.85,\,\beta_4=0.75$} 
\label{tab:segm-4}
\begin{tabular}{clrrrrrrrrrr}
\toprule
     \multirow{2}{*}{Fold} & \multirow{2}{*}{Method} & \multicolumn{1}{l}{\multirow{2}{*}{Obj}} & \multicolumn{1}{l}{\multirow{2}{*}{Time(s)}} & \multicolumn{1}{l}{\multirow{2}{*}{Train\_acc}} & \multicolumn{1}{l}{\multirow{2}{*}{Test\_acc}} & \multicolumn{3}{c}{Train\_prec} & \multicolumn{3}{c}{Test\_prec} \\
\cmidrule{7-12}   &  &  &  &  &  & 
  \multicolumn{1}{l}{label $2$} &
  \multicolumn{2}{l}{label $4$} &
  \multicolumn{1}{l}{label $2$} &
  \multicolumn{2}{l}{label $4$}  \\
\midrule
\multirow{4}{*}{1} & Full MIP & infeas.        & 3022 & 0.931          & 0.936          & 0.804 & 0.816 &  & 0.802 & 0.889 &  \\
                       & PIP      & 0.902          & 714  & \textbf{0.942} & 0.946          & 0.852 & 0.819 &  & 0.864 & 0.886 &  \\
                       & ISA-PIP  & 0.903          & 2466 & 0.938          & \textbf{0.958} & 0.853 & 0.782 &  & 0.862 & 0.877 &  \\
                       & IDSA-PIP & \textbf{0.913} & 1365 & 0.935          & 0.939          & 0.853 & 0.809 &  & 0.833 & 0.881 &  \\
\midrule
\multirow{4}{*}{2} & Full MIP & infeas.        & 3238 & 0.930          & 0.931          & 0.794 & 0.811 &  & 0.826 & 0.786 &  \\
                       & PIP      & infeas.        & 249  & 0.930          & 0.931          & 0.780 & 0.810 &  & 0.826 & 0.786 &  \\
                       & ISA-PIP  & 0.909          & 2251 & \textbf{0.942} & 0.939          & 0.852 & 0.810 &  & 0.872 & 0.831 &  \\
                       & IDSA-PIP & \textbf{0.922} & 1473 & 0.935          & \textbf{0.943} & 0.857 & 0.796 &  & 0.904 & 0.802 &  \\
\midrule
\multirow{4}{*}{3} & Full MIP & infeas.        & 3066 & 0.931          & 0.919          & 0.762 & 0.862 &  & 0.725 & 0.797 &  \\
                       & PIP      & 0.912          & 1033 & \textbf{0.950} & \textbf{0.922} & 0.868 & 0.849 &  & 0.812 & 0.741 &  \\
                       & ISA-PIP  & 0.911          & 2084 & \textbf{0.950} & \textbf{0.922} & 0.858 & 0.850 &  & 0.787 & 0.747 &  \\
                       & IDSA-PIP & \textbf{0.922} & 1703 & 0.941          & 0.920          & 0.872 & 0.849 &  & 0.802 & 0.744 &  \\
\midrule
\multirow{4}{*}{4} & Full MIP & infeas.        & 3106 & 0.930          & 0.908          & 0.768 & 0.830 &  & 0.730 & 0.809 &  \\
                       & PIP      & infeas.        & 265  & 0.930          & 0.908          & 0.768 & 0.830 &  & 0.723 & 0.821 &  \\
                       & ISA-PIP  & \textbf{0.930} & 2323 & \textbf{0.943} & \textbf{0.924} & 0.858 & 0.809 &  & 0.854 & 0.747 &  \\
                       & IDSA-PIP & 0.925          & 1424 & 0.939          & 0.917          & 0.861 & 0.833 &  & 0.866 & 0.750 &  \\
\bottomrule
\end{tabular}
\end{table}

\begin{table}[h]
\caption{Results of score-based classification on \textit{wave} dataset with a single precision constraint} 
\label{tab:wave-1}
\begin{tabular}{cclrrrrrr}
\toprule
 $\beta_0$ & {Fold} & {Method} & {Obj} & {Time(s)}& {Train\_acc} & {Test\_acc} & {Train\_prec} & {Test\_prec} \\
\midrule
\multirow{16}{*}{0.89} & \multirow{4}{*}{1} & Full MIP & infeas.        & 2556 & 0.875          & 0.869          & 0.876 &    0.872  \\
&  & PIP      & 0.834  & 1250 & \textbf{0.882} & 0.869    & 0.891& 0.867 \\
& & ISA-PIP  & 0.834          & 2567 & \textbf{0.882} & \textbf{0.870} & 0.891  & 0.874 \\
&  & IDSA-PIP & \textbf{0.836} & 1933 & \textbf{0.882} & \textbf{0.870} & 0.891& 0.871 \\
\cmidrule{2-9}
& \multirow{4}{*}{2} & Full MIP & 0.829          & 3483 & 0.878          & 0.864          & 0.892 &  0.860  \\
& & PIP      & 0.831          & 1630 & \textbf{0.880} & \textbf{0.868} & 0.894   & 0.867   \\
& & ISA-PIP  & 0.831          & 3114 & \textbf{0.880} & 0.864          & 0.891& 0.861  \\
& & IDSA-PIP & \textbf{0.834} & 1626 & 0.878          & 0.866          & 0.891 &  0.860   \\
\cmidrule{2-9}
& \multirow{4}{*}{3} & Full MIP & 0.833          & 3265 & 0.876          & 0.853          & 0.898 &   0.879   \\
& & PIP      & 0.838          & 911  & 0.879          & 0.858 & 0.893 &   0.860  \\
& & ISA-PIP  & 0.837          & 2550 & \textbf{0.881} & \textbf{0.860} & 0.896  & 0.867  \\
&  & IDSA-PIP & \textbf{0.840} & 1386 & 0.878          & 0.858          & 0.897 & 0.866   \\
\cmidrule{2-9}
& \multirow{4}{*}{4} & Full MIP & infeas.        & 1408 & 0.875          & 0.870          & 0.878 & 0.881  \\
& & PIP      & 0.831          & 1659 & \textbf{0.881} & 0.874  & 0.891  & 0.884 \\
& & ISA-PIP  & infeas.        & 4344 & 0.881          & 0.877 & 0.890 &  0.892  \\
& & IDSA-PIP & \textbf{0.836} & 2098 & 0.879          & \textbf{0.878} & 0.892 &  0.893   \\
\midrule
\midrule
\multirow{16}{*}{0.91} & \multirow{4}{*}{1} & Full MIP & 0.805          & 3427 & 0.854          & 0.862          & 0.921 &    0.918   \\
&  & PIP      & infeas.        & 1389 & 0.883          & 0.870          & 0.894 &   0.874    \\
&   & ISA-PIP  & infeas.        & 2169 & 0.882          & 0.868          & 0.891 &   0.867   \\
&  & IDSA-PIP & \textbf{0.832} & 2263 & \textbf{0.881} & \textbf{0.868} & 0.910  & 0.885 \\
\cmidrule{2-9}
& \multirow{4}{*}{2} & Full MIP & infeas.        & 3156 & 0.877          & 0.868          & 0.892 &  0.875   \\
&  & PIP      & infeas.        & 860  & 0.879          & 0.866   & 0.893 &  0.874  \\
&  & ISA-PIP  & infeas.        & 2335 & 0.881          & 0.866          & 0.896 &  0.869 \\
&  & IDSA-PIP & \textbf{0.835} & 2465 & \textbf{0.875} & \textbf{0.874} & 0.912   & 0.900   \\
\cmidrule{2-9}
& \multirow{4}{*}{3} & Full MIP & 0.820           & 3573 & 0.869          & 0.849          & 0.911 &  0.892 \\
&  & PIP      & infeas.        & 1513 & 0.886          & 0.867          & 0.908 &  0.878  \\
&  & ISA-PIP  & infeas.        & 3882 & 0.884          & 0.862          & 0.908 &   0.877   \\
&   & IDSA-PIP & \textbf{0.835} & 1870 & \textbf{0.881} & \textbf{0.858} & 0.910   & 0.873  \\
\cmidrule{2-9}
& \multirow{4}{*}{4} & Full MIP & infeas.        & 2355 & 0.877          & 0.872          & 0.879 &   0.886   \\
&  & PIP      & infeas.        & 2060 & 0.878          & 0.868          & 0.900 &  0.901    \\
&  & ISA-PIP  & infeas.        & 3747 & 0.879          & 0.875          & 0.898 &  0.908   \\
&  & IDSA-PIP & \textbf{0.833} & 2253 & \textbf{0.876} & \textbf{0.872} & 0.911   & 0.909   \\
 \bottomrule
\end{tabular}
\end{table}

\begin{table}[h]
\caption{Results of score-based classification models on \textit{wave} dataset with two precision constraints,  $\beta_0=0.89,\,\beta_1=0.80$} 
\label{tab:wave-3}
\begin{tabular}{clrrrrrrrrrr}
\toprule
     \multirow{2}{*}{Fold} & \multirow{2}{*}{Method} & \multicolumn{1}{l}{\multirow{2}{*}{Obj}} & \multicolumn{1}{l}{\multirow{2}{*}{Time(s)}} & \multicolumn{1}{l}{\multirow{2}{*}{Train\_acc}} & \multicolumn{1}{l}{\multirow{2}{*}{Test\_acc}} & \multicolumn{3}{c}{Train\_prec} & \multicolumn{3}{c}{Test\_prec} \\
\cmidrule{7-12}   &  &  &  &  &  & 
  \multicolumn{1}{l}{label $0$} &
  \multicolumn{2}{l}{label $1$} &
  \multicolumn{1}{l}{label $0$} &
  \multicolumn{2}{l}{label $1$}  \\
\midrule
\multirow{4}{*}{1} & Full MIP & infeas.        & 1    & 0.874          & 0.870          & 0.875 & 0.879 &  & 0.872 & 0.868 &  \\
                       & PIP      & 0.834          & 1552 & \textbf{0.882} & 0.867          & 0.892 & 0.882 &  & 0.865 & 0.865 &  \\
                       & ISA-PIP  & 0.837          & 4801 & 0.880          & \textbf{0.869} & 0.893 & 0.876 &  & 0.878 & 0.864 &  \\
                       & IDSA-PIP & \textbf{0.838} & 1872 & 0.879          & 0.868          & 0.890 & 0.880 &  & 0.873 & 0.864 &  \\
\midrule
\multirow{4}{*}{2} & Full MIP & infeas.        & 2929 & 0.878          & 0.863          & 0.889 & 0.864 &  & 0.866 & 0.884 &  \\
                       & PIP      & 0.827          & 1426 & \textbf{0.881} & 0.866          & 0.894 & 0.863 &  & 0.867 & 0.882 &  \\
                       & ISA-PIP  & 0.830          & 4368 & 0.879          & 0.865          & 0.891 & 0.860 &  & 0.866 & 0.875 &  \\
                       & IDSA-PIP & \textbf{0.833} & 1754 & 0.880          & \textbf{0.869} & 0.894 & 0.859 &  & 0.876 & 0.881 &  \\
\midrule
\multirow{4}{*}{3} & Full MIP & 0.835          & 2861 & 0.878          & \textbf{0.863} & 0.890 & 0.869 &  & 0.866 & 0.858 &  \\
                       & PIP      & 0.837          & 1492 & \textbf{0.881} & 0.858          & 0.893 & 0.873 &  & 0.860 & 0.857 &  \\
                       & ISA-PIP  & 0.837          & 3069 & \textbf{0.881} & 0.858          & 0.893 & 0.873 &  & 0.858 & 0.857 &  \\
                       & IDSA-PIP & \textbf{0.842} & 1901 & 0.879          & 0.862          & 0.894 & 0.870 &  & 0.867 & 0.862 &  \\
\midrule
\multirow{4}{*}{4} & Full MIP & infeas.        & 3198 & 0.877          & 0.869          & 0.878 & 0.878 &  & 0.879 & 0.852 &  \\
                       & PIP      & 0.827          & 2069 & 0.879          & 0.862          & 0.891 & 0.869 &  & 0.874 & 0.842 &  \\
                       & ISA-PIP  & 0.833          & 3862 & \textbf{0.882} & \textbf{0.877} & 0.890 & 0.878 &  & 0.895 & 0.858 &  \\
                       & IDSA-PIP & \textbf{0.834} & 2278 & 0.881          & 0.874          & 0.892 & 0.876 &  & 0.892 & 0.855 &  \\
\bottomrule
\end{tabular}
\end{table}

\begin{table}[h]
\caption{Results of score-based classification models on \textit{wave} dataset with two precision constraints, $\beta_0=0.91,\,\beta_1=0.80$} 
\label{tab:wave-4}
\begin{tabular}{clrrrrrrrrrr}
\toprule
     \multirow{2}{*}{Fold} & \multirow{2}{*}{Method} & \multicolumn{1}{l}{\multirow{2}{*}{Obj}} & \multicolumn{1}{l}{\multirow{2}{*}{Time(s)}} & \multicolumn{1}{l}{\multirow{2}{*}{Train\_acc}} & \multicolumn{1}{l}{\multirow{2}{*}{Test\_acc}} & \multicolumn{3}{c}{Train\_prec} & \multicolumn{3}{c}{Test\_prec} \\
\cmidrule{7-12}   &  &  &  &  &  & 
  \multicolumn{1}{l}{label $0$} &
  \multicolumn{2}{l}{label $1$} &
  \multicolumn{1}{l}{label $0$} &
  \multicolumn{2}{l}{label $1$} \\
\midrule
\multirow{4}{*}{1} & Full MIP & infeas.        & 2851 & 0.877          & 0.871          & 0.879 & 0.882 &  & 0.871 & 0.870 &  \\
                       & PIP      & infeas.        & 848  & 0.882          & 0.867          & 0.890 & 0.882 &  & 0.867 & 0.865 &  \\
                       & ISA-PIP  & infeas.        & 2936 & 0.881          & 0.874          & 0.892 & 0.879 &  & 0.881 & 0.868 &  \\
                       & IDSA-PIP & \textbf{0.835} & 1818 & \textbf{0.880} & \textbf{0.870} & 0.911 & 0.870 &  & 0.895 & 0.856 &  \\
\midrule
\multirow{4}{*}{2} & Full MIP & infeas.        & 3600 & 0.877          & 0.866          & 0.892 & 0.858 &  & 0.863 & 0.883 &  \\
                       & PIP      & infeas.        & 2426 & 0.879          & 0.870          & 0.895 & 0.857 &  & 0.878 & 0.880 &  \\
                       & ISA-PIP  & infeas.        & 5186 & 0.884          & 0.868          & 0.901 & 0.865 &  & 0.873 & 0.891 &  \\
                       & IDSA-PIP & \textbf{0.836} & 2444 & \textbf{0.879} & \textbf{0.870} & 0.914 & 0.875 &  & 0.895 & 0.890 &  \\
\midrule
\multirow{4}{*}{3} & Full MIP & 0.826          & 3600 & 0.881          & \textbf{0.860} & 0.915 & 0.854 &  & 0.886 & 0.832 &  \\
                       & PIP      & infeas.        & 3190 & 0.884          & 0.865          & 0.909 & 0.876 &  & 0.881 & 0.866 &  \\
                       & ISA-PIP  & infeas.        & 6569 & 0.879          & 0.851          & 0.909 & 0.874 &  & 0.871 & 0.847 &  \\
                       & IDSA-PIP & \textbf{0.838} & 2045 & \textbf{0.885} & 0.859          & 0.910 & 0.875 &  & 0.877 & 0.856 &  \\
\midrule
\multirow{4}{*}{4} & Full MIP & infeas.        & 3600 & 0.874          & 0.863          & 0.893 & 0.864 &  & 0.888 & 0.838 &  \\
                       & PIP      & infeas.        & 1708 & 0.877          & 0.876          & 0.888 & 0.870 &  & 0.896 & 0.851 &  \\
                       & ISA-PIP  & infeas.        & 5986 & 0.884          & 0.870          & 0.898 & 0.877 &  & 0.891 & 0.851 &  \\
                       & IDSA-PIP & \textbf{0.829} & 3651 & \textbf{0.873} & \textbf{0.860} & 0.917 & 0.876 &  & 0.905 & 0.843 &  \\
\bottomrule
\end{tabular}
\end{table}

\newpage

\subsection{Tree-based classification}
\label{app:settings tree}
Dataset information is listed in Table~\ref{tab:datasets tree}. For each dataset, we create 4 random folds and each of them consists of a training set ($50\%$), a validation set ($25\%$) and a test set ($25\%$). Training, validation and test sets are standardized: the training set is scaled to mean 0 and variance 1, with the same transformation applied to the validation set and the test set.

The numerical results are presented in Table~\ref{tab:wine}-\ref{tab:fish}. Since {\sc Gurobi} solver fails to find an optimal solution for most instances within the 3600-second time limit, we record the first time that the best solution is obtained within the time limit in Table~\ref{tab:wine}-\ref{tab:fish}. If {\sc Gurobi} solver finds an optimal solution within the 3600-second time limit, the corresponding time will be denoted with a superscript $*$. All the time values are rounded up to the nearest integers. Additional experimental parameters are provided below:
\begin{itemize}

\item \textbf{{\sc Gurobi} setting}:

All MIP problems are solved by the {\sc Gurobi} 11.0.3 solver, with parameters in Table \ref{tab:multi-gurobi-setting} set to avoid numerical issues and all others set to default. 

\begin{table}[h]
\caption{{\sc Gurobi} setting for score-based multiclass classification.}
\label{tab:multi-gurobi-setting}
\begin{tabular}{lrrrr}
\toprule
Name & MIPFocus & IntegralityFocus & FeasibilityTol & Threads  \\
\midrule
Value & 1 & 1 &  1e-09 & 4 
\\
\bottomrule
\end{tabular}
\end{table}

\item \textbf{Initial solution}:
The initial solution $(\bs{a}^0,\bs{b}^0)$ is generated by the following procedure: we first train a decision tree by using \texttt{DecisionTreeClassifier} in scikit-learn (default settings), which is not necessarily a complete binary tree. We construct a binary tree by expanding those branches with depth $<$ max\_depth by randomly selecting features and assigning the same class. After multiplying the parameter corresponding to the binary tree by $\tau_1$, we obtain the initial solution $(\bs{a}^0,\bs{b}^0)$ for fair comparison.

\begin{table}[h]
\caption{Datasets for  tree-based  classification.} \label{tab:datasets tree}
\begin{tabular}{lcccclccc}
\toprule
\multicolumn{6}{c}{Descriptive Information } & \multicolumn{3}{c}{\# of integers in Full MIP} \\
ID & Source & $N$ & $p$ & $J$ & \# of each class &  $D=2$ & $D=3$ & $D=4$\\
\midrule
\textit{wine} & \href{https://archive.ics.uci.edu/dataset/109/wine}{uci} & 178 & 13 & 3 & 59/71/48 & 2148 & 5364 & 12864 \\
\textit{nwth} & \href{https://https://archive.ics.uci.edu/dataset/102/thyroid+disease}{uci} & 215 & 5 & 3 & 150/35/30 & 2592 & 6474 & 15528 \\
\textit{htds} & \href{https://archive.ics.uci.edu/dataset/45/heart+disease}{uci} & 301 & 18 & 5 & 163/55/35/35/13 & 3632 & 9070 & 21752 \\
\textit{dmtl} & \href{https://archive.ics.uci.edu/dataset/33/dermatology}{uci} & 358 & 34 & 6 & 111/60/71/48/48/20 & 4320 & 10788 & 25872 \\
\textit{blsc} & \href{https://archive.ics.uci.edu/dataset/12/balance+scale}{uci} & 625 & 4 & 3 & 288/49/288 & 7512 & 18774 & 45048 \\
\textit{ctmc} & \href{https://archive.ics.uci.edu/dataset/30/contraceptive+method+choice}{uci} & 1473 & 11 & 3 & 629/333/511 & 17688 & 44214 & 106104 \\
\textit{ceva} & \href{https://archive.ics.uci.edu/dataset/19/car+evaluation}{uci} & 1728 & 15 & 4 & 1210/384/69/65 & 20752 & 51872 & 124480 \\
\textit{fish} & \href{https://www.kaggle.com/datasets/taweilo/fish-species-sampling-weight-and-height-data?resource=download}{kaggle} & 4080 & 3 & 9 & 476/415/468/435/475/458/418/480/455 & 48996 & 122472 & 293904 \\
\botrule
\end{tabular}
\footnotetext{Note: The name of these ID in their sources are: (1) \textit{wine}: Wine; (2) \textit{nwth}: Thyroid Disease (new-thyroid); (3) \textit{htds}: Heart Disease; (4) \textit{dmtl}: Dermatology; (5) \textit{blsc}: Balance Scale; (6) \textit{ctmc}: Contraceptive Method Choice; (7) \textit{ceva}: Car Evaluation; (8) \textit{fish}: Fish species sampling data - legnth and weight. We divide these 8 datasets into three categories based on the sample size: small datasets: \textit{wine} and \textit{nwth}; medium-sized datasets: \textit{htds}, \textit{dmtl} and \textit{blsc}; large datasets: \textit{ctmc}, \textit{ceva} and \textit{fish}.}
\end{table}

\item \textbf{{\sc Gurobi} setting}:
The MIP problems are solved by the {\sc Gurobi} 11.0.3 solver, the parameters in Table \ref{tab:gurobi setting tree} are set to avoid numerical issues, and all other parameters are default. 

\begin{table}[h]
\caption{{\sc Gurobi} setting for decision tree classification.}
\label{tab:gurobi setting tree}
\begin{tabular}{lrrrrrr}
\toprule
Name & MIPFocus & IntegralityFocus & NumericFocus & FeasibilityTol & LazyConstraints & Threads \\
\midrule
Value & 1 & 1 & 3 & 1e-09 & 1 & 32\\
\botrule
\end{tabular}
\end{table}

\item \textbf{Model parameter}:
Regarding to the norm constraints $P\triangleq \{(\bs{a},\bs{b})\in \mathbb R^{(p+1)\times (2^D-1)}:\|a^k\| _1\le \tau_1, |b_k| \le \tau_1, \|a^k\|_0 \leq \tau_0, ~\forall k \in {\cal T_B}\}$, we set $\tau_1=100$, and  $\tau_0 \in \left[\left \lceil \frac{p}{2} \right \rceil-3,\left \lceil \frac{p}{2} \right \rceil+3\right]$ for each split if the dimension of features $p>5$.  For each dataset, we only impose one precision constraint for the class with the largest sample size among all classes. The choice of precision threshold $\beta_j$ is set according to the following principle: a) on each dataset, we set the same precision threshold for the 4 folds; b)  With the initial solution, if there is no sample classified as $j$ in some of the 4 folds, we set $\beta_j$ as the proportion of class $j$ in the full dataset. Otherwise, we set $\beta_j$ as the highest precision except 1 by the initial solution among 4 folds, taking round up to two decimal places; c) We set $\beta_j=1$ only when the precision for all 4 folds are 1. The purpose of this precision setting is to make it possible for MIP to find feasible solutions. 
\item \textbf{Parameter setting in IDSA-PIP}: 
 For the solution of the subproblems by PIP, we set $r_0=0.4$ if $N<300$ else $r_0=0.2$ for depth-2 trees, $r_0=0.2$ if $N<300$ else $r_0=0.1$ for depth-3 trees, and set $r_0=0.1$ if $N<1000$ else $r_0=0.05$ for depth-4 trees. $r_{\max}=0.6,~r_{\Delta}=0.1,~\mu_{\max}=10,~\tilde{\mu}_{\max}=3$. We set $\{{\varepsilon}_{\nu}\}=\{10^{-\nu}\}_{\nu=1}^4$ as the shrinkage approximation parameter sequence in IDSA-PIP.
\item \textbf{Time limit}:
We set the time\_limit = $3600s$ for {\sc Gurobi} in solving Full MIP. We use callback and time\_limit mechanism to control the termination of subproblems in PIP. Specifically, in solving the subproblem, {\sc Gurobi} procedure will be terminated if either it reaches time\_limit ($300s$), or the objective value remains unchanged for $30s$.  
\item \textbf{Prominent models for comparison}:
We endeavor to retain the original training procedures for these models as presented in the references, except for the necessary alignment of computing resources (for models that require the use of {\sc Gurobi} or {\sc Cplex}) to ensure fair comparison. For each data set, we create 4 random folds of the data each consisting of a training set ($50\%$), a validation set ($25\%$) used to tune the hyperparameters, and a test set ($25\%$). The detailed training process of each model is as follows:
\begin{itemize}
\item \texttt{C-PIP}: solving the tree-based classification problem with precision constraint by IDSA-PIP method.
\item \texttt{U-PIP}: solving the tree-based classification problem without precision constraint by IDSA-PIP method.
\item \texttt{BendersOCT}:  only used to solve the problem without precision constraint. For each split and depth, we train a decision tree for the complexity regularization parameter $\lambda$ on the training set, then tune $\lambda$ on the validation set.  We then train on the union of the training and validation sets with the tuned regularization parameter.
\item \texttt{FlowOCT}: solve problem with precision constraint. For any given split and depth, we use the same $\lambda$ as \texttt{BendersOCT} to train a decision tree on the union of the training and validation sets.
\item \texttt{BinOCT}: there is no hyperparameters to tune, for both unconstrained (\texttt{U-BinOCT}) and precision constrained (\texttt{C-BinOCT}) model, we directly train on the union of the training and validation sets. 
\item \texttt{CART}: \texttt{DecisionTreeClassifier} in scikit-learn. For each split and depth, we tune the hyperparameters according to Table \ref{tab:tree tune parameter} on  the validation set. We then train on the union of the training and validation sets with the tuned hyperparameters.
\end{itemize}
\end{itemize}

\begin{table}[h]
\caption{Hyperparameters that are tuned for model comparison.} 
\label{tab:tree tune parameter}
\begin{tabular}{ll}
\toprule
Model & Hyperparameters \\
\midrule
C-PIP & $\ell_1$-norm constraint $\tau_1 = 100$, $\ell_0$-norm constraint $\tau_0\in \left[\left \lceil p/2 \right \rceil-3,\left \lceil p/2 \right \rceil+3\right]$\\
\midrule
U-PIP & $\tau_1,\,\tau_0$ the same as \texttt{C-PIP}\\
\midrule
BendersOCT & Regularization parameter $\lambda$: $[0, 0.1, 0.2, \dots,0.9]$ \\
\midrule
FlowOCT  & $\lambda$ the same as \texttt{BendersOCT} \\
\midrule
\multirow{3}{*}{CART} &  `criterion': [`gini', `entropy'], `min\_samples\_split': [2, 5, 10],  \\
        & `min\_samples\_leaf': [1, 2, 5],    `max\_features': [None, `sqrt', `log2'], \\
        & `splitter': [`best', `random']\\
\bottomrule
\end{tabular}
\end{table}

\begin{table}[h]
\caption{Results of tree-based classification on \textit{wine} dataset with one single precision constraint} \label{tab:wine}
\begin{tabular}{clllrrrrrr}   
\toprule
\multicolumn{1}{l}{Depth} & \multicolumn{1}{l}{$\beta_j$} & \multicolumn{1}{l}{Fold} & Method & \multicolumn{1}{l}{Obj} & \multicolumn{1}{l}{Time(s)} &
\multicolumn{1}{l}{Train\_acc} & \multicolumn{1}{l}{Test\_acc} & \multicolumn{1}{l}{Train\_prec} & \multicolumn{1}{l}{Test\_prec} \\ 
\midrule
\multirow{8}{*}{2} & \multirow{8}{*}{0.98} & \multirow{2}{*}{1} & Full MIP & \textbf{1.000} & $^*$989 & \textbf{1.000} & \textbf{0.911} & 1.000 & 1.000 \\
      &       &       & IDSA-PIP & \textbf{1.000} & 37    & \textbf{1.000} & 0.867 & 1.000 & 1.000 \\
\cmidrule{3-10}      &       & \multirow{2}{*}{2} & Full MIP & \textbf{1.000} & $^*$82 & \textbf{1.000} & \textbf{0.933} & 1.000 & 0.857 \\
      &       &       & IDSA-PIP & \textbf{1.000} & 72    & \textbf{1.000} & \textbf{0.933} & 1.000 & 0.944 \\
\cmidrule{3-10}      &       & \multirow{2}{*}{3} & Full MIP & \textbf{1.000} & $^*$1059 & \textbf{1.000} & \textbf{0.911} & 1.000 & 0.889 \\
      &       &       & IDSA-PIP & \textbf{1.000} & 125   & \textbf{1.000} & \textbf{0.911} & 1.000 & 0.938 \\
\cmidrule{3-10}      &       & \multirow{2}{*}{4} & Full MIP & 0.977 & 2975  & 0.977 & 0.933 & 1.000 & 1.000 \\
      &       &       & IDSA-PIP & \textbf{0.992} & 497   & \textbf{0.992} & \textbf{0.978} & 1.000 & 1.000 \\
\midrule
\multirow{8}{*}{3} & \multirow{8}{*}{0.99} & \multirow{2}{*}{1} & Full MIP & \textbf{1.000} & $^*$68 & \textbf{1.000} & 0.778 & 1.000 & 0.800 \\
      &       &       & IDSA-PIP & \textbf{1.000} & 154   & \textbf{1.000} & \textbf{0.889} & 1.000 & 0.882 \\
\cmidrule{3-10}      &       & \multirow{2}{*}{2} & Full MIP & \textbf{1.000} & $^*$92 & \textbf{1.000} & 0.911 & 1.000 & 0.818 \\
      &       &       & IDSA-PIP & \textbf{1.000} & 5     & \textbf{1.000} & \textbf{0.933} & 1.000 & 1.000 \\
\cmidrule{3-10}      &       & \multirow{2}{*}{3} & Full MIP & 0.669 & 179   & 0.669 & 0.667 & 1.000 & 1.000 \\
      &       &       & IDSA-PIP & \textbf{1.000} & 328   & \textbf{1.000} & \textbf{0.867} & 1.000 & 0.929 \\
\cmidrule{3-10}      &       & \multirow{2}{*}{4} & Full MIP & \textbf{1.000} & $^*$447 & \textbf{1.000} & \textbf{0.933} & 1.000 & 0.941 \\
      &       &       & IDSA-PIP & \textbf{1.000} & 3     & \textbf{1.000} & 0.911 & 1.000 & 1.000 \\
\midrule
\multirow{8}{*}{4} & \multirow{8}{*}{0.99} & \multirow{2}{*}{1} & Full MIP & \textbf{1.000} & $^*$3601 & \textbf{1.000} & 0.844 & 1.000 & 0.824 \\
      &       &       & IDSA-PIP & \textbf{1.000} & 7     & \textbf{1.000} & \textbf{0.867} & 1.000 & 0.875 \\
\cmidrule{3-10}      &       & \multirow{2}{*}{2} & Full MIP & \textbf{1.000} & $^*$510 & \textbf{1.000} & 0.911 & 1.000 & 0.818 \\
      &       &       & IDSA-PIP & \textbf{1.000} & 8     & \textbf{1.000} & \textbf{0.956} & 1.000 & 0.944 \\
\cmidrule{3-10}      &       & \multirow{2}{*}{3} & Full MIP & \textbf{1.000} & $^*$1766 & \textbf{1.000} & 0.889 & 1.000 & 0.941 \\
      &       &       & IDSA-PIP & \textbf{1.000} & 44    & \textbf{1.000} & \textbf{0.911} & 1.000 & 1.000 \\
\cmidrule{3-10}      &       & \multirow{2}{*}{4} & Full MIP & \textbf{1.000} & $^*$292 & \textbf{1.000} & 0.911 & 1.000 & 0.938 \\
      &       &       & IDSA-PIP & \textbf{1.000} & 6     & \textbf{1.000} & \textbf{0.933} & 1.000 & 0.941 \\
\bottomrule
\end{tabular}
\footnotetext{Note: The symbol * in column ``Time(s)" means that an optimal solution is found and the optimality gap is 0.}
\end{table}

\begin{table}[h]
\caption{Results of tree-based classification on \textit{nwth} dataset with one single precision constraint} 
\begin{tabular}{clllrrrrrr}   
\toprule
\multicolumn{1}{l}{Depth} & \multicolumn{1}{l}{$\beta_j$} & \multicolumn{1}{l}{Fold} & Method & \multicolumn{1}{l}{Obj} & \multicolumn{1}{l}{Time(s)} &
\multicolumn{1}{l}{Train\_acc} & \multicolumn{1}{l}{Test\_acc} & \multicolumn{1}{l}{Train\_prec} & \multicolumn{1}{l}{Test\_prec} \\ 
\midrule
\multirow{8}{*}{2} & \multirow{8}{*}{0.97} & \multirow{2}{*}{1} & Full MIP & \textbf{1.000} & $^*$1484 & \textbf{1.000} & \textbf{0.963} & 1.000 & 0.974 \\
      &       &       & IDSA-PIP & \textbf{1.000} & 687   & \textbf{1.000} & \textbf{0.963} & 1.000 & 0.974 \\
\cmidrule{3-10}      &       & \multirow{2}{*}{2} & Full MIP & \textbf{1.000} & $^*$358 & \textbf{1.000} & \textbf{0.963} & 1.000 & 0.974 \\
      &       &       & IDSA-PIP & 0.994 & 759   & 0.994 & 0.907 & 1.000 & 0.946 \\
\cmidrule{3-10}      &       & \multirow{2}{*}{3} & Full MIP & \textbf{1.000} & $^*$252 & \textbf{1.000} & \textbf{0.907} & 1.000 & 0.923 \\
      &       &       & IDSA-PIP & \textbf{1.000} & 118   & \textbf{1.000} & \textbf{0.907} & 1.000 & 0.902 \\
\cmidrule{3-10}      &       & \multirow{2}{*}{4} & Full MIP & \textbf{1.000} & $^*$155 & \textbf{1.000} & \textbf{0.889} & 1.000 & 0.900 \\
      &       &       & IDSA-PIP & \textbf{1.000} & 37    & \textbf{1.000} & \textbf{0.889} & 1.000 & 0.921 \\
\midrule
\multirow{8}{*}{3} & \multirow{8}{*}{0.99} & \multirow{2}{*}{1} & Full MIP & 0.311 & 1399  & 0.311 & 0.259 & 1.000 & - \\
      &       &       & IDSA-PIP & \textbf{0.783} & 1275  & \textbf{0.783} & \textbf{0.741} & 1.000 & 0.900 \\
\cmidrule{3-10}      &       & \multirow{2}{*}{2} & Full MIP & 0.398 & 3065  & 0.398 & 0.370 & 1.000 & 1.000 \\
      &       &       & IDSA-PIP & \textbf{0.814} & 953   & \textbf{0.814} & \textbf{0.648} & 1.000 & 1.000 \\
\cmidrule{3-10}      &       & \multirow{2}{*}{3} & Full MIP & 0.311 & 429   & 0.311 & 0.315 & 1.000 & 1.000 \\
      &       &       & IDSA-PIP & \textbf{0.994} & 1256  & \textbf{0.994} & \textbf{0.907} & 1.000 & 0.902 \\
\cmidrule{3-10}      &       & \multirow{2}{*}{4} & Full MIP & \textbf{1.000} & $^*$495 & \textbf{1.000} & \textbf{0.907} & 1.000 & 0.923 \\
      &       &       & IDSA-PIP & 0.994 & 995   & 0.994 & \textbf{0.907} & 1.000 & 0.902 \\
\midrule
\multirow{8}{*}{4} & \multirow{8}{*}{1} & \multirow{2}{*}{1} & Full MIP & 0.354 & 3525  & 0.354 & 0.296 & 1.000 & 1.000 \\
      &       &       & IDSA-PIP & \textbf{0.739} & 1329  & \textbf{0.745} & \textbf{0.611} & 1.000 & 0.947 \\
\cmidrule{3-10}      &       & \multirow{2}{*}{2} & Full MIP & 0.304 & 31    & 0.304 & 0.296 & 1.000 & - \\
      &       &       & IDSA-PIP & \textbf{1.000} & 1182  & \textbf{1.000} & \textbf{0.981} & 1.000 & 1.000 \\
\cmidrule{3-10}      &       & \multirow{2}{*}{3} & Full MIP & 0.329 & 182   & 0.329 & 0.333 & 1.000 & 1.000 \\
      &       &       & IDSA-PIP & \textbf{0.981} & 1269  & \textbf{0.981} & \textbf{0.889} & 1.000 & 0.921 \\
\cmidrule{3-10}      &       & \multirow{2}{*}{4} & Full MIP & 0.335 & 29    & 0.335 & 0.259 & 1.000 & 0.667 \\
      &       &       & IDSA-PIP & \textbf{1.000} & 933   & \textbf{1.000} & \textbf{0.907} & 1.000 & 0.946 \\
\bottomrule
\end{tabular}
\footnotetext{Note: The symbol * in column      ``Time(s)" means that an optimal solution is found and the optimality gap is 0. The symbol - in column ``Test\_prec" means that no samples on the test set are assigned to class $j$.}
\end{table}

\begin{table}[h]
\caption{Results of tree-based classification on \textit{htds} dataset with one single precision constraint } 
\begin{tabular}{clllrrrrrr}   
\toprule
\multicolumn{1}{l}{Depth} & \multicolumn{1}{l}{$\beta_j$} & \multicolumn{1}{l}{Fold} & Method & \multicolumn{1}{l}{Obj} & \multicolumn{1}{l}{Time(s)} &
\multicolumn{1}{l}{Train\_acc} & \multicolumn{1}{l}{Test\_acc} & \multicolumn{1}{l}{Train\_prec} & \multicolumn{1}{l}{Test\_prec} \\ 
\midrule
\multirow{8}{*}{2} & \multirow{8}{*}{0.77} & \multirow{2}{*}{1} & Full MIP & 0.636 & 2234  & 0.636 & \textbf{0.579} & 0.804 & 0.696 \\
      &       &       & IDSA-PIP & \textbf{0.738} & 310   & \textbf{0.738} & \textbf{0.579} & 0.840 & 0.704 \\
\cmidrule{3-10}      &       & \multirow{2}{*}{2} & Full MIP & 0.604 & 2065  & 0.604 & 0.500 & 0.805 & 0.744 \\
      &       &       & IDSA-PIP & \textbf{0.764} & 542   & \textbf{0.764} & \textbf{0.539} & 0.880 & 0.795 \\
\cmidrule{3-10}      &       & \multirow{2}{*}{3} & Full MIP & 0.591 & 1021  & 0.591 & 0.461 & 0.803 & 0.732 \\
      &       &       & IDSA-PIP & \textbf{0.747} & 583   & \textbf{0.747} & \textbf{0.645} & 0.858 & 0.796 \\
\cmidrule{3-10}      &       & \multirow{2}{*}{4} & Full MIP & 0.636 & 3373  & 0.636 & 0.474 & 0.932 & 0.743 \\
      &       &       & IDSA-PIP & \textbf{0.707} & 699   & \textbf{0.707} & \textbf{0.579} & 0.852 & 0.692 \\
\midrule
\multirow{8}{*}{3} & \multirow{8}{*}{0.83} & \multirow{2}{*}{1} & Full MIP & 0.600 & 3601  & 0.600 & \textbf{0.500} & 0.853 & 0.762 \\
      &       &       & IDSA-PIP & \textbf{0.791} & 817   & \textbf{0.791} & 0.461 & 0.890 & 0.667 \\
\cmidrule{3-10}      &       & \multirow{2}{*}{2} & Full MIP & 0.676 & 1476  & 0.676 & \textbf{0.487} & 0.868 & 0.744 \\
      &       &       & IDSA-PIP & \textbf{0.773} & 605   & \textbf{0.773} & 0.447 & 0.982 & 0.788 \\
\cmidrule{3-10}      &       & \multirow{2}{*}{3} & Full MIP & 0.618 & 2849  & 0.622 & \textbf{0.526} & 0.841 & 0.800 \\
      &       &       & IDSA-PIP & \textbf{0.778} & 944   & \textbf{0.778} & 0.513 & 0.917 & 0.733 \\
\cmidrule{3-10}      &       & \multirow{2}{*}{4} & Full MIP & 0.613 & 91    & 0.613 & \textbf{0.513} & 0.850 & 0.756 \\
      &       &       & IDSA-PIP & \textbf{0.831} & 634   & \textbf{0.831} & \textbf{0.513} & 0.916 & 0.674 \\
\midrule
\multirow{8}{*}{4} & \multirow{8}{*}{0.83} & \multirow{2}{*}{1} & Full MIP & 0.733 & 3601  & 0.733 & \textbf{0.539} & 0.866 & 0.720 \\
      &       &       & IDSA-PIP & \textbf{0.938} & 1412  & \textbf{0.938} & 0.513 & 0.960 & 0.786 \\
\cmidrule{3-10}      &       & \multirow{2}{*}{2} & Full MIP & 0.702 & 170   & 0.702 & 0.500 & 0.837 & 0.708 \\
      &       &       & IDSA-PIP & \textbf{0.880} & 1290  & \textbf{0.880} & \textbf{0.513} & 0.937 & 0.800 \\
\cmidrule{3-10}      &       & \multirow{2}{*}{3} & Full MIP & 0.676 & 306   & 0.676 & \textbf{0.553} & 0.848 & 0.745 \\
      &       &       & IDSA-PIP & \textbf{0.880} & 1835  & \textbf{0.880} & \textbf{0.553} & 0.938 & 0.733 \\
\cmidrule{3-10}      &       & \multirow{2}{*}{4} & Full MIP & 0.733 & 3601  & 0.733 & \textbf{0.553} & 0.860 & 0.745 \\
      &       &       & IDSA-PIP & \textbf{0.898} & 1465  & \textbf{0.898} & 0.526 & 0.944 & 0.762 \\
\bottomrule
\end{tabular}
\end{table}

\begin{table}[h]
\caption{Results of tree-based classification on \textit{dmtl} dataset with one single precision constraint} 
\begin{tabular}{clllrrrrrr}   
\toprule
\multicolumn{1}{l}{Depth} & \multicolumn{1}{l}{$\beta_j$} & \multicolumn{1}{l}{Fold} & Method & \multicolumn{1}{l}{Obj} & \multicolumn{1}{l}{Time(s)} &
\multicolumn{1}{l}{Train\_acc} & \multicolumn{1}{l}{Test\_acc} & \multicolumn{1}{l}{Train\_prec} & \multicolumn{1}{l}{Test\_prec} \\ 
\midrule
\multirow{8}{*}{2} & \multirow{8}{*}{1.00} & \multirow{2}{*}{1} & Full MIP & \textbf{0.731} & 1400  & \textbf{0.731} & 0.678 & 1.000 & 0.960 \\
      &       &       & IDSA-PIP & \textbf{0.731} & 4     & \textbf{0.731} & \textbf{0.689} & 1.000 & 0.926 \\
\cmidrule{3-10}      &       & \multirow{2}{*}{2} & Full MIP & \textbf{0.731} & 2922  & \textbf{0.731} & 0.656 & 1.000 & 0.900 \\
      &       &       & IDSA-PIP & 0.728 & 76    & 0.728 & \textbf{0.711} & 1.000 & 0.933 \\
\cmidrule{3-10}      &       & \multirow{2}{*}{3} & Full MIP & \textbf{0.731} & 2127  & \textbf{0.731} & \textbf{0.711} & 1.000 & 0.931 \\
      &       &       & IDSA-PIP & 0.724 & 460   & 0.724 & 0.700 & 1.000 & 0.931 \\
\cmidrule{3-10}      &       & \multirow{2}{*}{4} & Full MIP & 0.724 & 3102  & 0.724 & \textbf{0.722} & 1.000 & 0.931 \\
      &       &       & IDSA-PIP & \textbf{0.728} & 51    & \textbf{0.728} & 0.689 & 1.000 & 0.903 \\
\midrule
\multirow{8}{*}{3} & \multirow{8}{*}{1.00} & \multirow{2}{*}{1} & Full MIP & \textbf{0.813} & 1     & \textbf{0.813} & \textbf{0.767} & 1.000 & 0.962 \\
      &       &       & IDSA-PIP & \textbf{0.813} & 491   & \textbf{0.813} & \textbf{0.767} & 1.000 & 0.962 \\
\cmidrule{3-10}      &       & \multirow{2}{*}{2} & Full MIP & \textbf{0.802} & 1     & \textbf{0.802} & \textbf{0.811} & 1.000 & 1.000 \\
      &       &       & IDSA-PIP & \textbf{0.802} & 431   & \textbf{0.802} & \textbf{0.811} & 1.000 & 1.000 \\
\cmidrule{3-10}      &       & \multirow{2}{*}{3} & Full MIP & 0.799 & 1     & 0.799 & 0.822 & 1.000 & 1.000 \\
      &       &       & IDSA-PIP & \textbf{1.000} & 579   & \textbf{1.000} & \textbf{0.989} & 1.000 & 1.000 \\
\cmidrule{3-10}      &       & \multirow{2}{*}{4} & Full MIP & 0.799 & 1     & 0.799 & 0.778 & 1.000 & 0.964 \\
      &       &       & IDSA-PIP & \textbf{0.899} & 777   & \textbf{0.899} & \textbf{0.811} & 1.000 & 0.964 \\
\midrule
\multirow{8}{*}{4} & \multirow{8}{*}{1.00} & \multirow{2}{*}{1} & Full MIP & \textbf{0.929} & 3601  & \textbf{0.929} & 0.878 & 1.000 & 1.000 \\
      &       &       & IDSA-PIP & 0.914 & 583   & 0.914 & \textbf{0.889} & 1.000 & 1.000 \\
\cmidrule{3-10}      &       & \multirow{2}{*}{2} & Full MIP & \textbf{0.907} & 1     & \textbf{0.907} & \textbf{0.933} & 1.000 & 1.000 \\
      &       &       & IDSA-PIP & \textbf{0.907} & 541   & \textbf{0.907} & \textbf{0.933} & 1.000 & 1.000 \\
\cmidrule{3-10}      &       & \multirow{2}{*}{3} & Full MIP & \textbf{0.903} & 1     & \textbf{0.903} & \textbf{0.933} & 1.000 & 0.966 \\
      &       &       & IDSA-PIP & \textbf{0.903} & 726   & \textbf{0.903} & \textbf{0.933} & 1.000 & 0.966 \\
\cmidrule{3-10}      &       & \multirow{2}{*}{4} & Full MIP & \textbf{0.910} & 1     & \textbf{0.910} & \textbf{0.867} & 1.000 & 0.964 \\
      &       &       & IDSA-PIP & \textbf{0.910} & 507   & \textbf{0.910} & \textbf{0.867} & 1.000 & 0.964 \\
\bottomrule
\end{tabular}
\footnotetext{Note: The extremely short times of Full MIP indicate that the corresponding objective values have not changed from the very beginning to the end of the optimization process.}
\end{table}

\begin{table}[h]
\caption{Results of tree-based classification on \textit{blsc} dataset with one single precision constraint} 
\begin{tabular}{clllrrrrrr}   
\toprule
\multicolumn{1}{l}{Depth} & \multicolumn{1}{l}{$\beta_j$} & \multicolumn{1}{l}{Fold} & Method & \multicolumn{1}{l}{Obj} & \multicolumn{1}{l}{Time(s)} &
\multicolumn{1}{l}{Train\_acc} & \multicolumn{1}{l}{Test\_acc} & \multicolumn{1}{l}{Train\_prec} & \multicolumn{1}{l}{Test\_prec} \\ 
\midrule
\multirow{8}{*}{2} & \multirow{8}{*}{0.75} & \multirow{2}{*}{1} & Full MIP & 0.692 & 38    & 0.692 & 0.662 & 0.792 & 0.737 \\
      &       &       & IDSA-PIP & \textbf{0.912} & 668   & \textbf{0.912} & \textbf{0.873} & 0.895 & 0.863 \\
\cmidrule{3-10}      &       & \multirow{2}{*}{2} & Full MIP & 0.895 & 3044  & 0.897 & 0.860 & 0.906 & 0.841 \\
      &       &       & IDSA-PIP & \textbf{0.923} & 894   & \textbf{0.923} & \textbf{0.924} & 0.945 & 0.945 \\
\cmidrule{3-10}      &       & \multirow{2}{*}{3} & Full MIP & 0.731 & 2043  & 0.731 & 0.694 & 0.915 & 0.864 \\
      &       &       & IDSA-PIP & \textbf{0.947} & 1045  & \textbf{0.949} & \textbf{0.860} & 0.986 & 0.954 \\
\cmidrule{3-10}      &       & \multirow{2}{*}{4} & Full MIP & 0.865 & 2098  & 0.868 & 0.828 & 0.915 & 0.870 \\
      &       &       & IDSA-PIP & \textbf{0.927} & 792   & \textbf{0.927} & \textbf{0.866} & 0.946 & 0.877 \\
\midrule
\multirow{8}{*}{3} & \multirow{8}{*}{0.82} & \multirow{2}{*}{1} & Full MIP & 0.746 & 3601  & 0.746 & 0.688 & 0.841 & 0.792 \\
      &       &       & IDSA-PIP & \textbf{0.944} & 1561  & \textbf{0.944} & \textbf{0.879} & 0.972 & 0.932 \\
\cmidrule{3-10}      &       & \multirow{2}{*}{2} & Full MIP & 0.780 & 3603  & 0.780 & 0.688 & 0.835 & 0.789 \\
      &       &       & IDSA-PIP & \textbf{0.929} & 1493  & \textbf{0.929} & \textbf{0.879} & 0.919 & 0.854 \\
\cmidrule{3-10}      &       & \multirow{2}{*}{3} & Full MIP & 0.759 & 89    & 0.759 & 0.732 & 0.834 & 0.842 \\
      &       &       & IDSA-PIP & \textbf{0.932} & 1380  & \textbf{0.932} & \textbf{0.904} & 0.964 & 0.944 \\
\cmidrule{3-10}      &       & \multirow{2}{*}{4} & Full MIP & 0.729 & 847   & 0.729 & 0.694 & 0.839 & 0.789 \\
      &       &       & IDSA-PIP & \textbf{0.900} & 1002  & \textbf{0.900} & \textbf{0.854} & 0.909 & 0.857 \\
\midrule
\multirow{8}{*}{4} & \multirow{8}{*}{0.87} & \multirow{2}{*}{1} & Full MIP & 0.838 & 1366  & 0.838 & 0.777 & 0.875 & 0.812 \\
      &       &       & IDSA-PIP & \textbf{0.923} & 1291  & \textbf{0.923} & \textbf{0.834} & 0.946 & 0.899 \\
\cmidrule{3-10}      &       & \multirow{2}{*}{2} & Full MIP & 0.818 & 386   & 0.818 & 0.739 & 0.876 & 0.864 \\
      &       &       & IDSA-PIP & \textbf{0.968} & 1893  & \textbf{0.968} & \textbf{0.873} & 0.986 & 0.914 \\
\cmidrule{3-10}      &       & \multirow{2}{*}{3} & Full MIP & 0.806 & 987   & 0.806 & 0.745 & 0.871 & 0.860 \\
      &       &       & IDSA-PIP & \textbf{0.964} & 1538  & \textbf{0.964} & \textbf{0.879} & 0.973 & 0.931 \\
\cmidrule{3-10}      &       & \multirow{2}{*}{4} & Full MIP & 0.825 & 1059  & 0.825 & 0.783 & 0.892 & 0.859 \\
      &       &       & IDSA-PIP & \textbf{0.962} & 1600  & \textbf{0.962} & \textbf{0.873} & 0.982 & 0.942 \\
\bottomrule
\end{tabular}
\end{table}

\begin{table}[h]
\caption{Results of tree-based classification on \textit{ctmc} dataset with one single precision constraint} 
\begin{tabular}{clllrrrrrr}   
\toprule
\multicolumn{1}{l}{Depth} & \multicolumn{1}{l}{$\beta_j$} & \multicolumn{1}{l}{Fold} & Method & \multicolumn{1}{l}{Obj} & \multicolumn{1}{l}{Time(s)} &
\multicolumn{1}{l}{Train\_acc} & \multicolumn{1}{l}{Test\_acc} & \multicolumn{1}{l}{Train\_prec} & \multicolumn{1}{l}{Test\_prec} \\ 
\midrule
\multirow{8}{*}{2} & \multirow{8}{*}{0.63} & \multirow{2}{*}{1} & Full MIP & 0.433 & 1499  & 0.433 & 0.425 & 1.000 & 0.935 \\
      &       &       & IDSA-PIP & \textbf{0.540} & 1512  & \textbf{0.540} & \textbf{0.504} & 0.700 & 0.696 \\
\cmidrule{3-10}      &       & \multirow{2}{*}{2} & Full MIP & 0.525 & 2467  & 0.525 & 0.472 & 0.644 & 0.600 \\
      &       &       & IDSA-PIP & \textbf{0.544} & 1276  & \textbf{0.545} & \textbf{0.455} & 0.753 & 0.646 \\
\cmidrule{3-10}      &       & \multirow{2}{*}{3} & Full MIP & 0.428 & 3601  & 0.429 & 0.442 & 0.975 & 0.971 \\
      &       &       & IDSA-PIP & \textbf{0.524} & 1038  & \textbf{0.524} & \textbf{0.472} & 0.745 & 0.667 \\
\cmidrule{3-10}      &       & \multirow{2}{*}{4} & Full MIP & 0.427 & 3601  & 0.427 & 0.398 & 0.987 & 1.000 \\
      &       &       & IDSA-PIP & \textbf{0.528} & 1461  & \textbf{0.529} & \textbf{0.512} & 0.699 & 0.660 \\
\midrule
\multirow{8}{*}{3} & \multirow{8}{*}{0.81} & \multirow{2}{*}{1} & Full MIP & 0.507 & 2024  & 0.507 & 0.491 & 0.822 & 0.770 \\
      &       &       & IDSA-PIP & \textbf{0.583} & 1990  & \textbf{0.583} & \textbf{0.512} & 0.824 & 0.739 \\
\cmidrule{3-10}      &       & \multirow{2}{*}{2} & Full MIP & 0.453 & 469   & 0.453 & 0.393 & 0.988 & 1.000 \\
      &       &       & IDSA-PIP & \textbf{0.595} & 1618  & \textbf{0.595} & \textbf{0.477} & 0.839 & 0.716 \\
\cmidrule{3-10}      &       & \multirow{2}{*}{3} & Full MIP & 0.433 & 1752  & 0.433 & 0.469 & 0.985 & 1.000 \\
      &       &       & IDSA-PIP & \textbf{0.476} & 1045  & \textbf{0.477} & \textbf{0.450} & 0.833 & 0.818 \\
\cmidrule{3-10}      &       & \multirow{2}{*}{4} & Full MIP & 0.444 & 700   & 0.444 & 0.431 & 0.987 & 1.000 \\
      &       &       & IDSA-PIP & \textbf{0.479} & 1133  & \textbf{0.479} & \textbf{0.461} & 1.000 & 0.935 \\
\midrule
\multirow{8}{*}{4} & \multirow{8}{*}{0.69} & \multirow{2}{*}{1} & Full MIP & 0.562 & 3601  & 0.562 & 0.523 & 0.814 & 0.759 \\
      &       &       & IDSA-PIP & \textbf{0.620} & 1572  & \textbf{0.621} & \textbf{0.537} & 0.714 & 0.623 \\
\cmidrule{3-10}      &       & \multirow{2}{*}{2} & Full MIP & 0.560 & 1949  & 0.560 & 0.491 & 0.789 & 0.766 \\
      &       &       & IDSA-PIP & \textbf{0.626} & 1317  & \textbf{0.626} & \textbf{0.512} & 0.709 & 0.645 \\
\cmidrule{3-10}      &       & \multirow{2}{*}{3} & Full MIP & infeas. & 3462  & 0.514 & 0.512 & 0.678 & 0.667 \\
      &       &       & IDSA-PIP & \textbf{0.575} & 1028  & \textbf{0.575} & \textbf{0.564} & 0.776 & 0.683 \\
\cmidrule{3-10}      &       & \multirow{2}{*}{4} & Full MIP & 0.558 & 2161  & 0.558 & 0.520 & 0.783 & 0.764 \\
      &       &       & IDSA-PIP & \textbf{0.631} & 2135  & \textbf{0.631} & \textbf{0.542} & 0.721 & 0.629 \\
\bottomrule
\end{tabular}
\end{table}

\begin{table}[h]
\caption{Results of tree-based classification on \textit{ceva} dataset with one single precision constraint} 
\begin{tabular}{clllrrrrrr}   
\toprule
\multicolumn{1}{l}{Depth} & \multicolumn{1}{l}{$\beta_j$} & \multicolumn{1}{l}{Fold} & Method & \multicolumn{1}{l}{Obj} & \multicolumn{1}{l}{Time(s)} &
\multicolumn{1}{l}{Train\_acc} & \multicolumn{1}{l}{Test\_acc} & \multicolumn{1}{l}{Train\_prec} & \multicolumn{1}{l}{Test\_prec} \\ 
\midrule
\multirow{8}{*}{2} & \multirow{8}{*}{0.82} & \multirow{2}{*}{1} & Full MIP & 0.566 & 129   & 0.566 & 0.525 & 1.000 & 1.000 \\
      &       &       & IDSA-PIP & \textbf{0.919} & 1198  & \textbf{0.919} & \textbf{0.891} & 0.999 & 0.986 \\
\cmidrule{3-10}      &       & \multirow{2}{*}{2} & Full MIP & 0.555 & 373   & 0.555 & 0.558 & 1.000 & 1.000 \\
      &       &       & IDSA-PIP & \textbf{0.886} & 1032  & \textbf{0.886} & \textbf{0.875} & 0.973 & 0.958 \\
\cmidrule{3-10}      &       & \multirow{2}{*}{3} & Full MIP & 0.709 & 3246  & 0.709 & 0.725 & 0.826 & 0.836 \\
      &       &       & IDSA-PIP & \textbf{0.841} & 1079  & \textbf{0.841} & \textbf{0.838} & 0.909 & 0.889 \\
\cmidrule{3-10}      &       & \multirow{2}{*}{4} & Full MIP & \textbf{0.555} & 37    & \textbf{0.555} & \textbf{0.558} & 1.000 & 1.000 \\
      &       &       & IDSA-PIP & \textbf{0.555} & 547   & \textbf{0.555} & \textbf{0.558} & 1.000 & 1.000 \\
\midrule
\multirow{8}{*}{3} & \multirow{8}{*}{0.97} & \multirow{2}{*}{1} & Full MIP & infeas. & 479  & 0.791 & 0.806 & 0.944 & 0.970 \\
      &       &       & IDSA-PIP & \textbf{0.906} & 1657  & \textbf{0.906} & \textbf{0.877} & 0.992 & 0.980 \\
\cmidrule{3-10}      &       & \multirow{2}{*}{2} & Full MIP & infeas. & 2996  & 0.791 & 0.794 & 0.966 & 0.966 \\
      &       &       & IDSA-PIP & \textbf{0.856} & 1256  & \textbf{0.856} & \textbf{0.863} & 0.986 & 0.986 \\
\cmidrule{3-10}      &       & \multirow{2}{*}{3} & Full MIP & 0.771 & 1713  & 0.771 & 0.799 & 1.000 & 1.000 \\
      &       &       & IDSA-PIP & \textbf{0.847} & 1237  & \textbf{0.847} & \textbf{0.863} & 0.988 & 0.989 \\
\cmidrule{3-10}      &       & \multirow{2}{*}{4} & Full MIP & 0.775 & 1311  & 0.775 & 0.785 & 1.000 & 1.000 \\
      &       &       & IDSA-PIP & \textbf{0.904} & 1951  & \textbf{0.904} & \textbf{0.894} & 0.995 & 0.993 \\
\midrule
\multirow{8}{*}{4} & \multirow{8}{*}{0.96} & \multirow{2}{*}{1} & Full MIP & 0.799 & 3428  & 0.799 & 0.792 & 0.982 & 0.996 \\
      &       &       & IDSA-PIP & \textbf{0.911} & 2399  & \textbf{0.911} & \textbf{0.907} & 1.000 & 1.000 \\
\cmidrule{3-10}      &       & \multirow{2}{*}{2} & Full MIP & infeas. & 2  & 0.816 & 0.803 & 0.957 & 0.937 \\
      &       &       & IDSA-PIP & \textbf{0.812} & 929   & \textbf{0.812} & \textbf{0.812} & 0.982 & 0.985 \\
\cmidrule{3-10}      &       & \multirow{2}{*}{3} & Full MIP & 0.792 & 2695  & 0.792 & 0.819 & 0.987 & 0.971 \\
      &       &       & IDSA-PIP & \textbf{0.829} & 2005  & \textbf{0.829} & \textbf{0.850} & 0.967 & 0.953 \\
\cmidrule{3-10}      &       & \multirow{2}{*}{4} & Full MIP & infeas. & 2349  & 0.807 & 0.801 & 0.915 & 0.903 \\
      &       &       & IDSA-PIP & \textbf{0.806} & 750   & \textbf{0.806} & \textbf{0.806} & 0.967 & 0.967 \\
\bottomrule
\end{tabular}
\end{table}

\begin{table}[h]
\caption{Results of tree-based classification on \textit{fish} dataset with one single precision constraint} \label{tab:fish}
\begin{tabular}{clllrrrrrr}   
\toprule
\multicolumn{1}{l}{Depth} & \multicolumn{1}{l}{$\beta_j$} & \multicolumn{1}{l}{Fold} & Method & \multicolumn{1}{l}{Obj} & \multicolumn{1}{l}{Time(s)} &
\multicolumn{1}{l}{Train\_acc} & \multicolumn{1}{l}{Test\_acc} & \multicolumn{1}{l}{Train\_prec} & \multicolumn{1}{l}{Test\_prec} \\ 
\midrule
\multirow{8}{*}{2} & \multirow{8}{*}{0.22} & \multirow{2}{*}{1} & Full MIP & infeas. & 1  & 0.462 & 0.463 & 0.217 & 0.216 \\
      &       &       & IDSA-PIP & \textbf{0.462} & 453   & \textbf{0.462} & \textbf{0.463} & 0.235 & 0.234 \\
\cmidrule{3-10}      &       & \multirow{2}{*}{2} & Full MIP & infeas. & 1  & 0.462 & 0.463 & 0.217 & 0.216 \\
      &       &       & IDSA-PIP & \textbf{0.462} & 422   & \textbf{0.462} & \textbf{0.463} & 0.233 & 0.234 \\
\cmidrule{3-10}      &       & \multirow{2}{*}{3} & Full MIP & infeas. & 1  & 0.462 & 0.463 & 0.217 & 0.216 \\
      &       &       & IDSA-PIP & \textbf{0.462} & 450   & \textbf{0.462} & \textbf{0.463} & 0.234 & 0.234 \\
\cmidrule{3-10}      &       & \multirow{2}{*}{4} & Full MIP & infeas. & 1  & 0.458 & 0.458 & 0.213 & 0.213 \\
      &       &       & IDSA-PIP & \textbf{0.462} & 609   & \textbf{0.462} & \textbf{0.461} & 0.269 & 0.269 \\
\midrule
\multirow{8}{*}{3} & \multirow{8}{*}{0.28} & \multirow{2}{*}{1} & Full MIP & infeas. & 1  & 0.679 & 0.675 & 0.272 & 0.271 \\
      &       &       & IDSA-PIP & \textbf{0.681} & 603   & \textbf{0.681} & \textbf{0.681} & 0.280 & 0.280 \\
\cmidrule{3-10}      &       & \multirow{2}{*}{2} & Full MIP & infeas. & 1  & 0.678 & 0.678 & 0.272 & 0.271 \\
      &       &       & IDSA-PIP & \textbf{0.681} & 509   & \textbf{0.681} & \textbf{0.679} & 0.281 & 0.281 \\
\cmidrule{3-10}      &       & \multirow{2}{*}{3} & Full MIP & infeas. & 1  & 0.679 & 0.677 & 0.272 & 0.271 \\
      &       &       & IDSA-PIP & \textbf{0.681} & 553   & \textbf{0.681} & \textbf{0.681} & 0.281 & 0.280 \\
\cmidrule{3-10}      &       & \multirow{2}{*}{4} & Full MIP & infeas. & 1  & 0.680 & 0.679 & 0.269 & 0.270 \\
      &       &       & IDSA-PIP & \textbf{0.681} & 783   & \textbf{0.681} & \textbf{0.679} & 0.283 & 0.284 \\
\midrule
\multirow{8}{*}{4} & \multirow{8}{*}{0.36} & \multirow{2}{*}{1} & Full MIP & infeas. & 1  & 0.785 & 0.786 & 0.354 & 0.355 \\
      &       &       & IDSA-PIP & \textbf{0.888} & 1564  & \textbf{0.888} & \textbf{0.887} & 0.517 & 0.520 \\
\cmidrule{3-10}      &       & \multirow{2}{*}{2} & Full MIP & infeas. & 2  & 0.786 & 0.785 & 0.355 & 0.354 \\
      &       &       & IDSA-PIP & \textbf{0.843} & 1094  & \textbf{0.846} & \textbf{0.850} & 0.438 & 0.444 \\
\cmidrule{3-10}      &       & \multirow{2}{*}{3} & Full MIP & infeas. & 2  & 0.786 & 0.785 & 0.355 & 0.354 \\
      &       &       & IDSA-PIP & \textbf{0.888} & 1415  & \textbf{0.889} & \textbf{0.881} & 0.519 & 0.511 \\
\cmidrule{3-10}      &       & \multirow{2}{*}{4} & Full MIP & infeas. & 2  & 0.783 & 0.783 & 0.352 & 0.352 \\
      &       &       & IDSA-PIP & \textbf{0.780} & 1053  & \textbf{0.781} & \textbf{0.780} & 1.000 & 1.000 \\
\bottomrule
\end{tabular}
\footnotetext{Note: The extremely short times of Full MIP indicate that the corresponding objective values have not changed from the very beginning to the end of the optimization process.}
\end{table}

\begin{figure}[h]
\centering
\includegraphics[width=\textwidth]{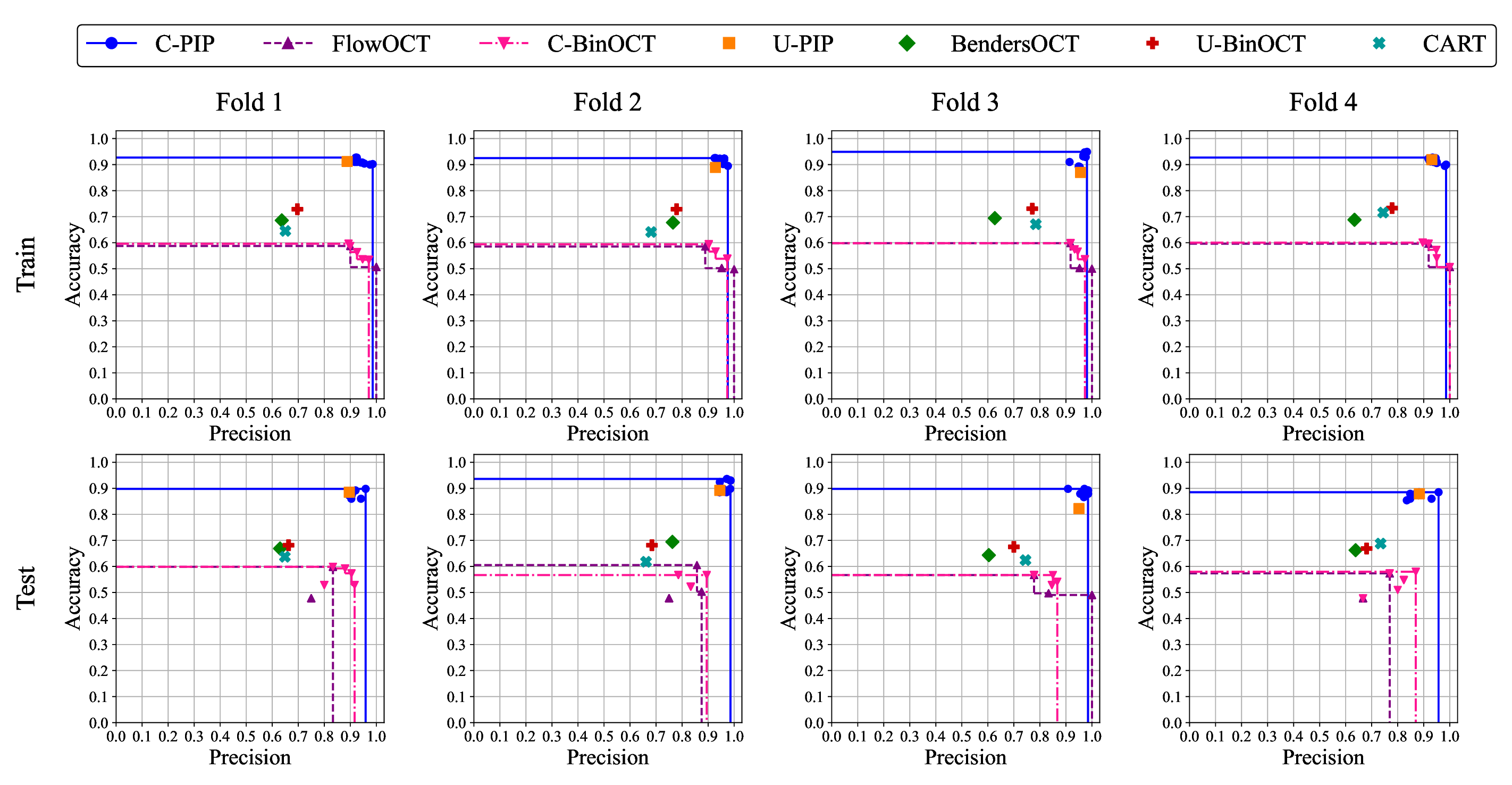}
\caption{Accuracy-precision plots and pareto curve, \textit{blsc} dataset, depth-2.}
\label{fig:pareto-curve-blsc-depth2}
\end{figure}

\begin{figure}[h]
\centering
\includegraphics[width=\textwidth]{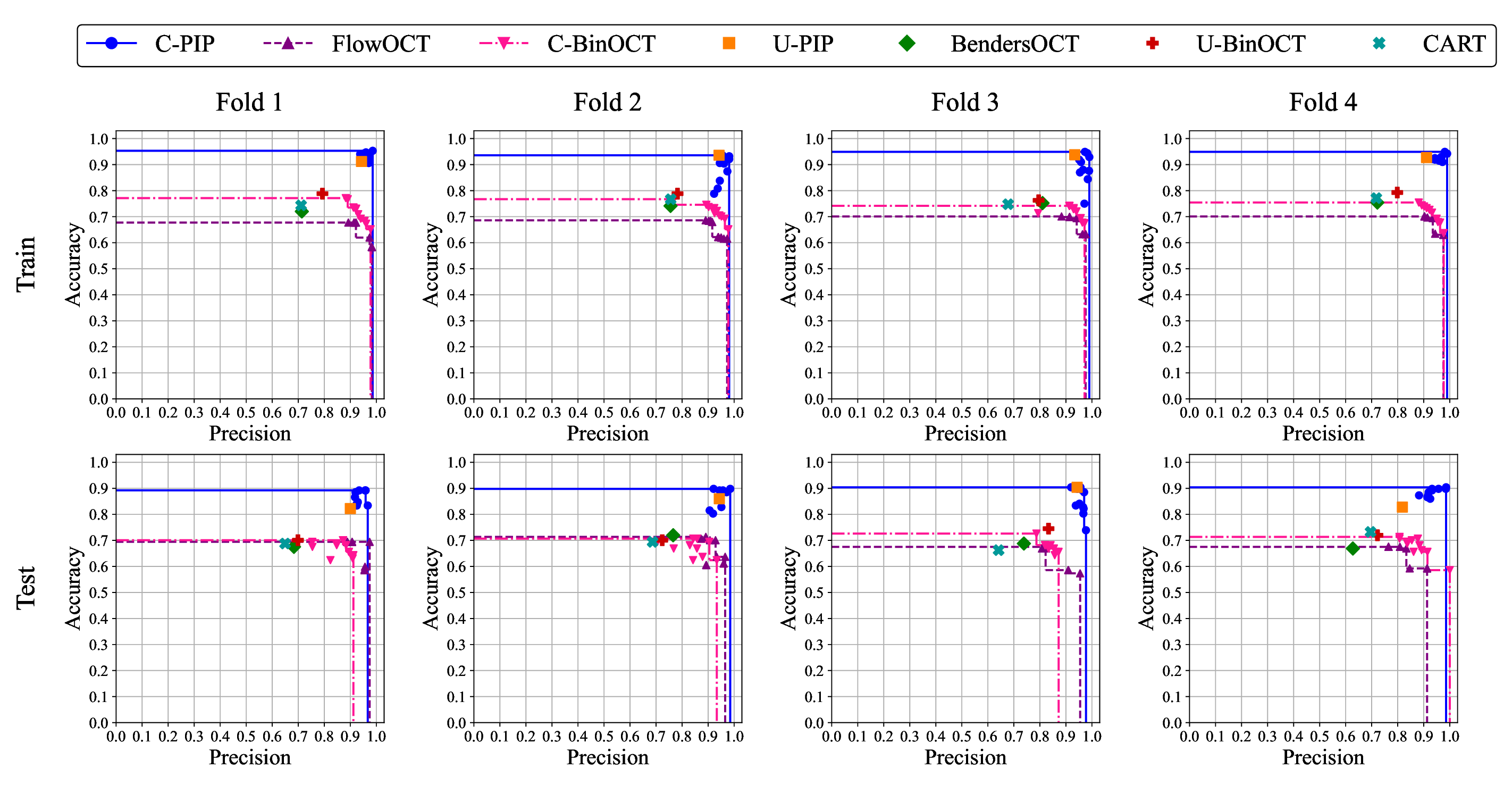}
\caption{Accuracy-precision plots and pareto curve, \textit{blsc} dataset, depth-3.}
\label{fig:pareto-curve-blsc-depth3}
\end{figure}

\begin{figure}[h]
\centering
\includegraphics[width=\textwidth]{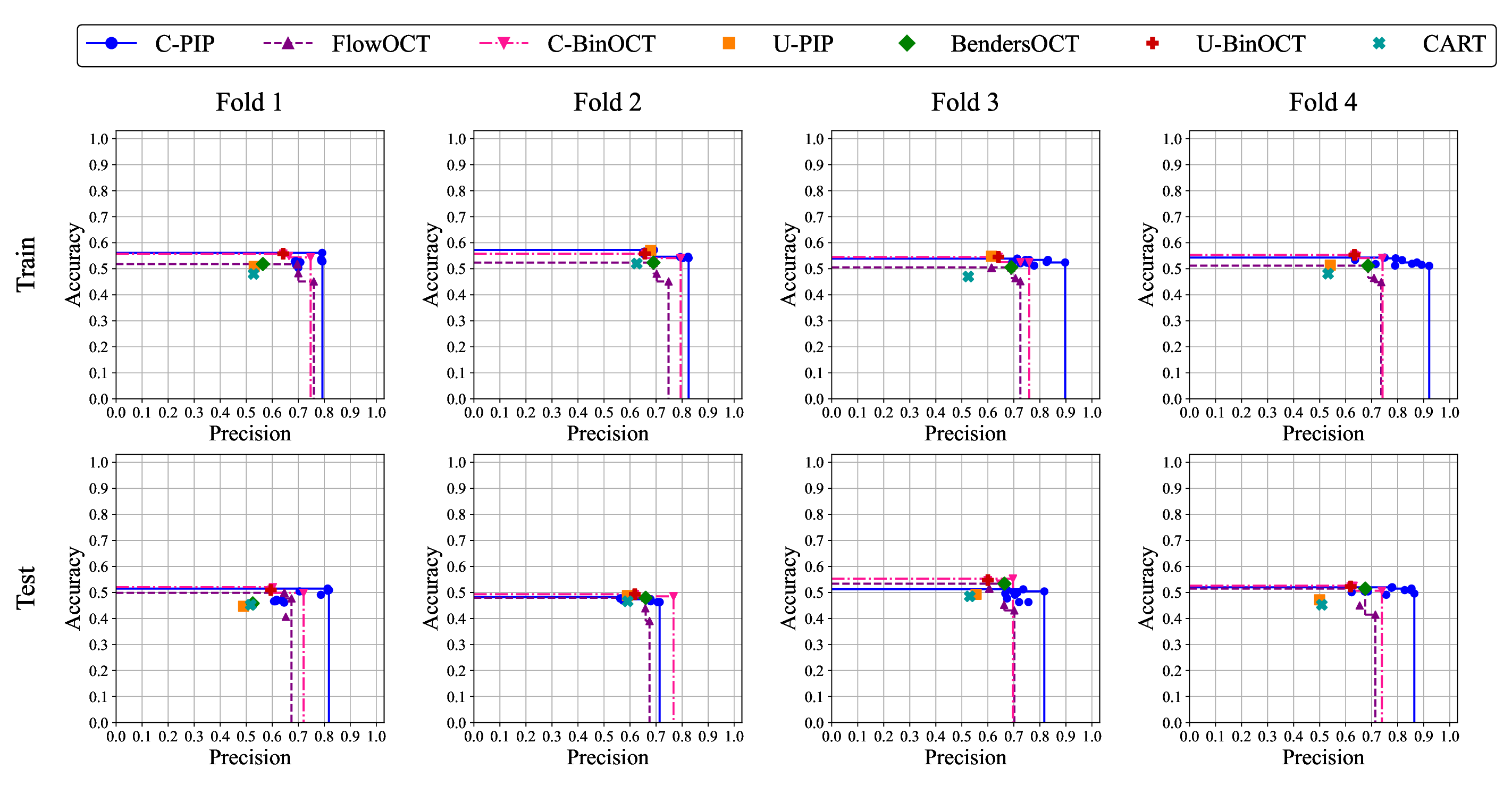}
\caption{Accuracy-precision plots and pareto curve, \textit{ctmc} dataset, depth-2.}
\label{fig:pareto-curve-ctmc-depth2}
\end{figure}

\begin{figure}[h]
\centering
\includegraphics[width=\textwidth]{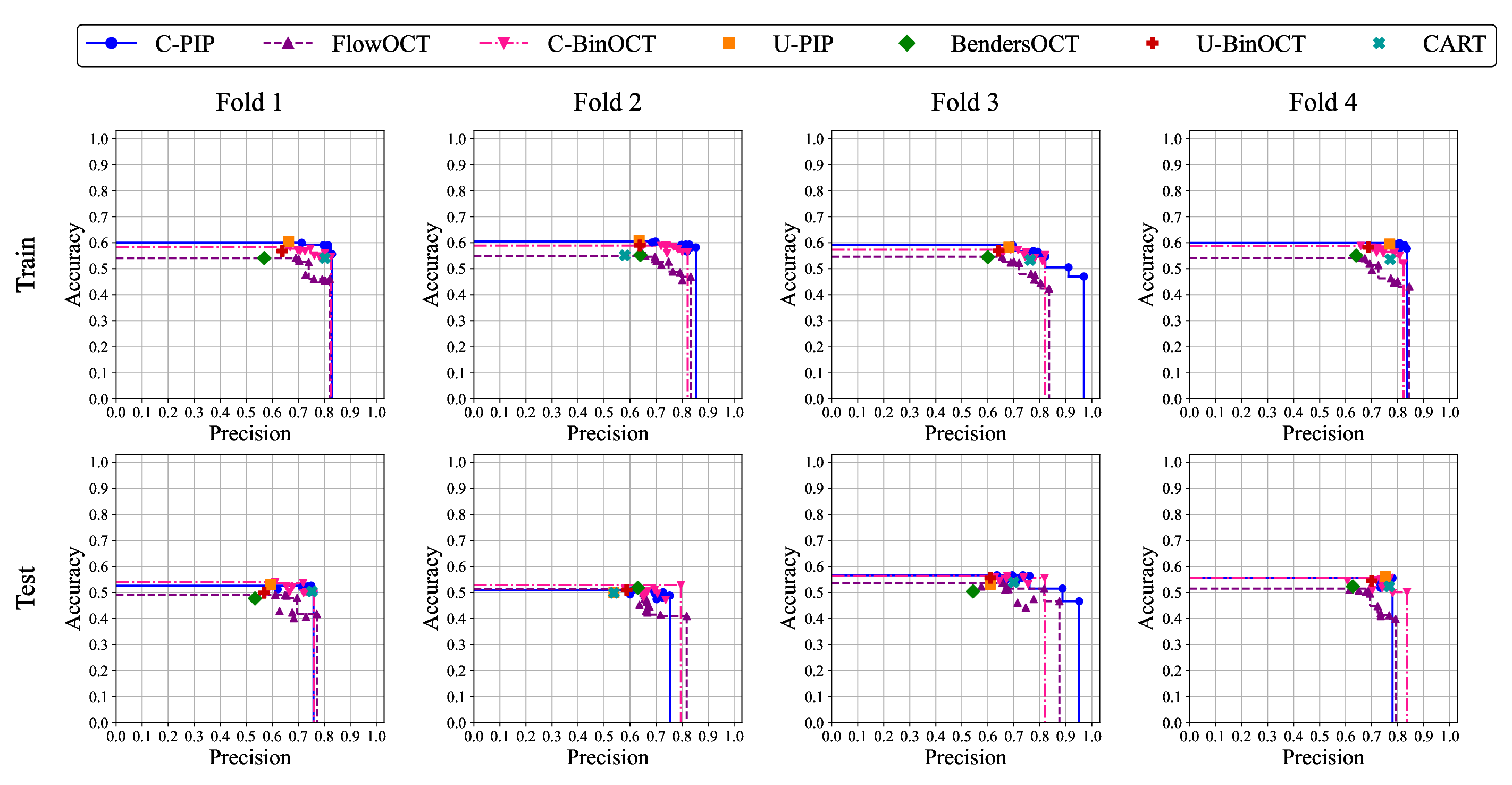}
\caption{Accuracy-precision plots and pareto curve, \textit{ctmc} dataset, depth-3.}
\label{fig:pareto-curve-ctmc-depth3}
\end{figure}

\begin{figure}[h]
\centering
\includegraphics[width=\textwidth]{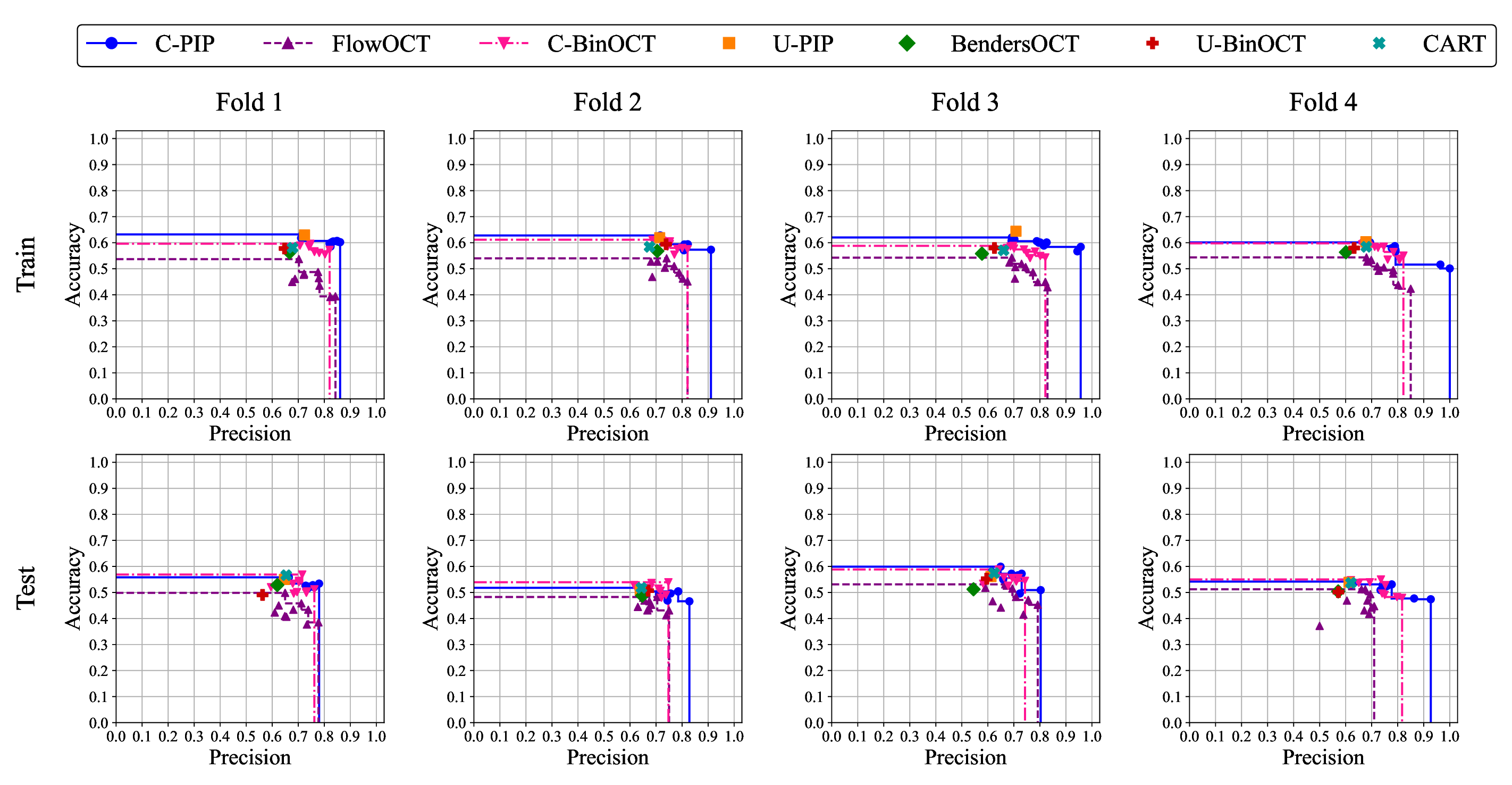}
\caption{Accuracy-precision plots and pareto curve, \textit{ctmc} dataset, depth-4.}
\label{fig:pareto-curve-ctmc-depth4}
\end{figure}

\end{appendices}


\bibliography{reference}

\end{document}